\title{Fibred sets within a predicative\\ and constructive  effective topos}
\author{Cipriano Junior Cioffo, Maria Emilia Maietti, and Samuele Maschio}
\date{}
\newtheorem{thm}{Theorem}[section]
\newtheorem{lemma}[thm]{Lemma}
\newtheorem{prop}[thm]{Proposition}
\newtheorem{cor}[thm]{Corollary}
\theoremstyle{definition}
\newtheorem{dfn}[thm]{Definition}
\newtheorem{rmk}[thm]{Remark}
\newcommand{\CC}{\boldsymbol{\mathcal{C}}}
\newcommand{\op}{\ensuremath{^{\operatorname{op}}}}
\newcommand{\msf}[1]{\mathsf{#1}}
\newcommand{\JJ}{\mathbf{J}}
\newcommand{\II}{\mathbf{I}}
\newcommand{\CUpsi}{(\msf{C},[\msf{H}],\nu)}
\newcommand{\tar}{\mbox{$\widehat{ID_1}$}}
\newcommand{\noteps}{\mathrel{\!\not\mathrel{\,\overline{\varepsilon}\!}\,}}
\newcommand{\Set}{\mathbf{Set}^r}
\newcommand{\Coll}{\mathbf{Col}^r}
\newcommand{\eff}{{\mathbf{Eff} }}
\newcommand{\mtt}{\mbox{{\bf mTT}}}
\newcommand{\emtt}{\mbox{{\bf emTT}}}
\newcommand{\Prop}{{\mathbf{Prop}^r}}
\newcommand{\Props}{{\mathbf{Prop}^r_\mathbf{s}}}
\newcommand{\peff}{\mathbf{pEff}}
\newcommand{\peffpr}{\mathbf{pEff}_{prop}}
\newcommand{\peffprs}{\mathbf{pEff}_{props}}
\newcommand{\mf}{{\bf MF}}
\newcommand{\Aa}{\msf{A}}
\newcommand{\BB}{\msf{B}}
\newcommand{\RR}{\msf{R}}
\newcommand{\Ss}{\msf{S}}
\newcommand{\czf}{\mathbf{CZF}}
\newcommand{\Dep}{Dep_\peff}
\newcommand{\Depset}{Dep_{\peff_{set}}}
\newcommand{\ARp}{\msf{A},[\msf{R}]}
\newcommand{\bmR}{{\msf{R}}}
\newcommand{\hott}{\mathbf{HoTT}}
\DeclareFontFamily{OT1}{pzc}{}
\DeclareFontShape{OT1}{pzc}{m}{it}{<->s*[1.1]pzcmi7t}{}
\DeclareMathAlphabet{\mathpzc}{OT1}{pzc}{m}{it}
\begin{document}

\maketitle
\begin{abstract}
We describe the fibrational structure of sets within the predicative
	variant $\peff$  of Hyland's Effective Topos $\eff$  previously introduced in Feferman's predicative theory of non-iterative fixpoints $\tar$.  Our structural analysis  can be carried out in constructive and predicative variants of  $\eff$ 
	within extensions of  Aczel's Constructive Zermelo-Fraenkel Set Theory.
All this shows that the  full subcategory of discrete objects of Hyland's Effective topos $\eff$ contains already a fibred predicative topos validating the formal Church's thesis, even  when both are formalized in a constructive metatheory.

\end{abstract}
\tableofcontents

%%%%%%%%%%%%%%%%%%%%%%%%%%%%%%%%%%%%%%%%%%%%%%%%%%%%%%%%%%%%%%%%%%%%%%%%%%%%%%%%%%%%%%%%%%%%%%%%%%%%%%%%%%%%%%%%%%%%%%%%%%%%%%%%%%%%%%%%%%%%%%%%%%%%%%%%%%%%%%%%%%%%%%%%%%%%%%%%%%%%%%%%%%%%%%%%%%%%%%%%%%%%%%%%       NEW SECTION

\section{Introduction}
This paper builds on the work of \cite{maietti_maschio_2021} by further contributing to a predicative and constructive development of elementary topos theory, drawing inspiration from \cite{hofstra2004relative, moerdijkpalmgren2002type, VDB, VDBI}.

More in detail, we  investigate the fibred  structure of sets of the  predicative version $\peff$
of  Hyland's ``Effective Topos'' in \cite{maietti_maschio_2021},
%$\eff$, 
said "predicative" since $\peff$ is
formalized in Feferman's weak theory of inductive definitions $\tar$ \cite{feferman}.

Here, in addition, we also show that  our fibred $\peff$ can be formalized  in a constructive and predicative meta-theory, such as
 Aczel's $\czf +{\bf REA_{\bigcup}} +\bf RDC$,  so that inductive and coinductive predicates can be defined in its internal logic.

The ultimate goal of this investigation is to show that $\peff$ has a fibred categorical structure sufficient to model 
the predicative and constructive two-level  Minimalist Foundation, for short \mf, conceived in \cite{mtt} and finalized in \cite{m09},
and its extensions with  (co)-inductive predicates in \cite{mmr21,mmr22,mfps23,phdthesisSabelli}. In this way, $\peff$ would provide a categorical predicative and constructive universe where to exhibit the constructive contents of proofs 
performed in \mf\ and extensions in terms of programs.
All this would contribute to broadening further
the already wide range of applications of  Hyland's ``Effective Topos'' $\eff$ in \cite{eff}, whose introduction opened different research lines
connecting computability  to topos theory  with new applications
to logic, mathematics, and computer science.

\iffalse
Works leading to constructive or predicative versions of realizability topos, and in particular of $\eff$ had been
pursued in \cite{Hosftrarelative, moerdijkpalmgren2002type}.
\fi

A main reason for such a broad range of applications is that  $\eff$   provides a realizability interpretation of higher-order logic extending Kleene realizability for intuitionistic arithmetic and, hence, validating formal Church's thesis, (see \cite{vanOOsten}).
Thanks to the results in this paper, our $\peff$  would become a predicative realizability model for \mf\  extended with inductive and coinductive predicates and formal Church's thesis. 

A key aspect of  $\eff$ that will help to reach our goal is that  the structure of $\eff$ roughly resembles
the two-level structure of \mf\ in \cite{m09}, which is one of the peculiarities of \mf\ among the variety of foundations for mathematics (and it is crucial to show its minimality by comparing each foundation with the most appropriate level of \mf).

 More in detail, the two-level structure of  \mf\ in \cite{m09} consists of 
an {\it intensional level} that can be used as a base for a proof-assistant and extraction of computational contents from proofs,  an {\it extensional level} where to develop mathematics in a language close to the usual practice, and an interpretation
of the extensional level in the intensional one using a quotient model. Such a quotient model was
analyzed categorically in \cite{qu12, elqu} as an instance the \emph{elementary quotient completion of a Lawvere's elementary doctrine} introduced in {\it loc.cit} as a \emph{generalization of the well-known notion of exact completion on a lex category} in \cite{car,carboniceliamagno,carbonivitale}.

Analogously,  the structure of  $\eff$ is an instance of a quotient completion called
{\it tripos-to-topos construction} introduced in  \cite{tripos} (see \cite{uec,MR16} for a precise comparison with the notion of elementary quotient completion).
\iffalse
which is also shared by localic toposes, whilst $\eff$ was built as the first example of a non-Grothendieck topos.\fi
But, while $\eff$ enjoys an impredicative internal language that can be modeled by just using suitable Lawvere doctrines, called
\emph{triposes},  both levels of \mf\ are formulated as constructive and predicative dependent type theories which need to be modeled by using fibrations or their variants as in \cite{jacobsbook, awodeynaturalmodels, categorieswithfamilies, maiettimodular}. 

Even the predicative version $\peff$ of $\eff$ in \cite{maietti_maschio_2021}  was built as the elementary quotient completion of a categorical rendering of the interpretation of the intensional level of \mf\ within Feferman's $\tar$  described in \cite{IMMSt}.  What remains is to model the extensional level \emtt\ of \mf\ in $\peff$.
Now, as hyperdoctrines are just enough to model the first order logical part of \mf,  to model also the set-theoretic part of \mf, and in particular of \emtt, we need to enucleate the fibred structure of $\peff$ by analyzing its codomain fibration.

More in detail, since both levels of \mf\  distinguish among collections, sets, propositions (quantifying on arbitrary collections), and small propositions (quantifying only on sets), to model all of them
we consider an equivalent presentation of the codomain fibration over $\peff$ whose fibers are families of dependent collections, which are in turn equivalent to suitable
internal groupoid actions  in the sense of \cite{maclane2012sheaves,facetI}  within $\peff$. Then, we identify a subfibration of families of sets of the codomain fibration of $\peff$, intended to model respectively sets and collections of \mf, whose corresponding sub-fibrations of subobjects will be used to model  small propositions and propositions of \mf.
\[\begin{tikzcd}
		\\
		{{\mathbf{pEff}^\to_{set}}} && {{\mathbf{pEff}}^\to} \\
		\\
		& {\mathbf{pEff}}
		\arrow["{\mathsf{cod}}"', from=2-1, to=4-2]
		\arrow["{\mathsf{cod}}", from=2-3, to=4-2]
  \arrow[hook, from=2-1, to=2-3]
	\end{tikzcd}\]

\iffalse Hence, we obtain four categories and we prove that they are locally cartesian closed list-arithmetic pretoposes.\fi

Such a fibred structure can be shown to exists also when $\peff$ is formalized 
in $\czf$ + $\mathbf{REA}_{\bigcup} +\mathbf{RDC}$ by defining it as the elementary quotient completion of a categorical rendering of the realizability interpretation of the intensional level of \mf\ enriched with inductive and coinductive topological predicates in \cite{mmr21,mmr22}. So, $\peff$ would gain the expressive power to model both levels of \mf\ extended with inductive and coinductive predicates thanks to the results in \cite{mfps23, phdthesisSabelli}.

Furthermore, as already observed in \cite{maietti_maschio_2021}, our constructions of $\peff$ can be mapped into $\eff$, and actually in its subcategory of  the exact on lex completion of recursive objects,  called discrete objects in \cite{rosolinidiscrete,vanoostenhomotopy}, thanks
to the fact that $\eff$ can be viewed as the exact on lex completion
of the category of partitioned assemblies (see \cite{robinsonrosolini,chgenexco,maiettitrottageneralizedexistentialcompletionsregular}).

It is worth recalling that a  constructive predicative study of $\eff$ was developed in the context of algebraic set theory by B. van den Berg and I. Moerdijk (see  \cite{VDB})
by taking Aczel's $\czf$\ in \cite{czf} as the set theory to be realized in such toposes.  
The full subcategory of $\peff$ consisting of objects equipped small equivalence relations can be embedded into van den Berg and Moerdijk's predicative version of $\eff$,
when formalized in $\czf$ + $\mathbf{REA}_{\bigcup} +\mathbf{RDC}$.
Instead, the whole constructive and predicative $\peff$ can only be embedded into the category of discrete objects \cite{rosolinidiscrete,vanoostenhomotopy} in the same metatheory.

From these comparisons we conclude {\it the  full subcategory of discrete objects of Hyland's Effective topos $\eff$ }contains already a {\bf fibred predicative topos} validating the formal Church's thesis, even  when both are formalized in a constructive metatheory. 

As a future goal, recalling that $\eff$ was built as a first example of a non-Grothendieck topos sharing with localic toposes the structure of tripos-to-topos construction,
we hope to exploit a general notion of {\it fibred predicative  elementary topos} and a {\it predicative tripos-to-topos construction} to include predicative presentations of realizability and localic toposes.
Thanks to the results in this paper, our fibred  $\peff$ constitutes a key example for pursuing this project.

\section{Our predicative  meta-theory.}
%: $\tar$ and some extensions of \MRCZF}
\subsection{A classical predicative metatheory \` la Feferman:
$\tar$.}\label{tar}
Feferman's theory of non-iterative fixpoints $\tar$ \cite{feferman} is a first-order classical theory with equality containing $0$, $succ$, the functional symbols for the (definitions of) primitive recursive functions and a unary predicate symbol $P_{\varphi}$ for every admissible second-order formula $\varphi(x,X)$ (see \cite{feferman,MM2,maietti_maschio_2021} for more details). %As usual, $\bot$ is the formula $0=succ(0)$. For the benefit of the reader, we must  recall that a
%A formula of second-order arithmetics $\varphi$ is \emph{admissible}  if it does not contain set quantifiers and it %has at most one free individual variable $x$ and one free set variable $X$ of which all occurrences are positive.

The axioms of $\tar$ include those of Peano arithmetic (even with those defining equations for primitive recursive functions) plus the \emph{induction principle} for every formula of $\tar$ and a \emph{fixpoint schema}: $P_{\varphi}(x)\leftrightarrow \varphi[P_{\varphi}/X]$ for every admissible second-order formula $\varphi$, where $\varphi[P_{\varphi}/X]$ is the result of substituting in $\varphi$ all the subformulas of the form $t\,\epsilon\, X$  with $P_{\varphi}(t)$.

We follow the notation for recursive functions in $\tar$ in \cite{maietti_maschio_2021}.

\subsection{A constructive and predicative meta-theory:  extensions of {\bf CZF}}
\iffalse
$\mathbf{CZF}$.\fi
P. Aczel introduced $\mathbf{CZF}$ as a constructive theory of sets (see \cite{czf} for its axioms). In this paper we are interested in two extensions of it used in \cite{mmr21} and \cite{mmr22} to prove the consistency of the Minimalist Foundation with inductive and coinductive topological generation, respectively. The first extension adds to $\mathbf{CZF}$ the so-called regular extension axiom $\mathbf{REA}$ which states that every set is a subset of a regular set. The second extension adds to $\mathbf{CZF}$ the axiom $\bigcup-\mathbf{REA}$, stating that every set is contained in a $\bigcup$-regular set, and the axiom of relativized dependent choice ($\mathbf{RDC}$). In \cite{mmr21} and \cite{mmr22} one can see how these axioms play a crucial role in interpreting universes and inductive and coinductive topological generation. Further details on these theories can be found in \cite{czf}.

In what follows we will write $\mathcal{T}$ as a metavariable for any of the theories $\tar$, $\mathbf{CZF}+\mathbf{REA}$ or $\mathbf{CZF}+\mathbf{RDC}+\bigcup-\mathbf{REA}$.
%%%%%%%%%%%%%%%%%%%%%%%%%%%%%%%%%%%%%%%%%%%%%%%%%%%%%%%%%%%%%%%%%%%%%%%%%%%%%%%%%%%%%%%%%%%%%%%%%%%%%%%%%%%%%%%%%%%%%%%%%%%%%%%%%%%%%%%%%%%%%%%%%%%%%%%%%%%%%%%%%%%%%%%%%%%%%%%%%%%%%%%%%%%%%%%%%%%%%%%%%%%%%%%%       NEW SECTION

\section{The starting structure of realized collections, sets, and propositions}
From \cite{maietti_maschio_2021}, we recall here the definition of the category of realized collections called  $\CC_r$ together with four indexed categories defined over it: that of realized collections and its sub-indexed category of realized sets
\[% https://q.uiver.app/#q=WzAsMyxbMCwwLCJcXG1hdGhjYWx7Q31fe3J9XFxvcCJdLFswLDIsIlxcbWF0aGNhbHtDfV97cn1cXG9wIl0sWzIsMSwiXFxtYXRoYmZ7Q2F0fSJdLFswLDIsIlxcbWF0aGJme1NldH1eciJdLFsxLDIsIlxcbWF0aGJme0NvbH1eciIsMl0sWzAsMSwiaWQiLDJdLFszLDQsIiIsMCx7InNob3J0ZW4iOnsic291cmNlIjoyMCwidGFyZ2V0IjoyMH0sImxldmVsIjoxLCJzdHlsZSI6eyJ0YWlsIjp7Im5hbWUiOiJob29rIiwic2lkZSI6InRvcCJ9fX1dXQ==
\begin{tikzcd}[row sep=small]
	{\CC_{r}\op} \\
	&& {\mathbf{Cat}} \\
	{\CC_{r}\op}
	\arrow[""{name=0, anchor=center, inner sep=0}, "{\mathbf{Set}^r}"{pos=0.3}, from=1-1, to=2-3]
	\arrow["id"', from=1-1, to=3-1]
	\arrow[""{name=1, anchor=center, inner sep=0}, "{\mathbf{Col}^r}"'{pos=0.3}, from=3-1, to=2-3]
	\arrow[shorten <=4pt, shorten >=4pt, hook, from=0, to=1]
\end{tikzcd}\]
%$$ \mathbf{Set}^r \ \subset\  \mathbf{Col}^r: \CC_{r}^{op}\rightarrow \mathbf{Cat} $$
and  the doctrine of realized proposition and its sub-doctrine of small propositions
\[% https://q.uiver.app/#q=WzAsMyxbMCwwLCJcXG1hdGhjYWx7Q31fe3J9XFxvcCJdLFswLDIsIlxcbWF0aGNhbHtDfV97cn1cXG9wIl0sWzIsMSwiXFxtYXRoYmZ7UG9zfSJdLFswLDIsIlxcb3ZlcmxpbmV7XFxQcm9wc30iLDAseyJsYWJlbF9wb3NpdGlvbiI6NzB9XSxbMSwyLCJcXG92ZXJsaW5le1xcUHJvcH1eciIsMl0sWzAsMSwiaWQiLDJdLFszLDQsIiIsMCx7InNob3J0ZW4iOnsic291cmNlIjoyMCwidGFyZ2V0IjoyMH0sImxldmVsIjoxLCJzdHlsZSI6eyJ0YWlsIjp7Im5hbWUiOiJob29rIiwic2lkZSI6InRvcCJ9fX1dXQ==
\begin{tikzcd}[row sep=small]
	{\CC_{r}\op} \\
	&& {\mathbf{Pos}} \\
	{\CC_{r}\op}
	\arrow[""{name=0, anchor=center, inner sep=0}, "{\overline{\Props}}"{pos=0.3}, from=1-1, to=2-3]
	\arrow["id"', from=1-1, to=3-1]
	\arrow[""{name=1, anchor=center, inner sep=0}, "{\overline{\Prop}}"'{pos=0.3}, from=3-1, to=2-3]
	\arrow[shorten <=4pt, shorten >=4pt, hook, from=0, to=1]
\end{tikzcd}\]

%$$ \overline{\Props} \ \subset \ \overline{\Prop}^r  :\mathcal{C}_{r}^{op}\rightarrow \mathbf{Cat} $$
Then, we consider the predicative effective topos $\peff$ in \cite{maietti_maschio_2021} defined as the base $\mathcal{Q}_{\overline{\mathbf{Prop}^r}}$ of the  elementary quotient completion of $\overline{\Prop} $. 
$$\peff :\equiv (\mathcal{Q}_{\overline{\mathbf{Prop}^r}})$$

\iffalse
\[\overline{(\overline{\mathbf{Prop}^r})}:\peff\op:=(\mathcal{Q}_{\overline{\mathbf{Prop}^r}})\op\to \mathbf{Pos}\]

%and the slice pseudofuntor 
%\[\peff/(-):\peff\op\to \mathbf{Cat}\]
%which provides a natural lift of of the indexed category $\Coll$ on $\peff$.

\textcolor{red}{\bf milly: fare disegno-  forse conviene usare la scrittura $(-)_{qx}$ del nostro articolo su TAC}
\fi

 The key point of this paper is to show {\it  how to lift the indexed category  $ \mathbf{Set}^r $, and consequently also $\overline{\Props}$,  on  the predicative effective topos $\peff$}
to the aim of  investigating their categorical properties.  We will do this by switching from indexed categories to fibrations.

\iffalse
Here, after recalling the definition of  such a category, related notions, and results, we  describe the fibred structure of families of collections and sets o
$\CC_r$ as Grothendieck fibrations of suitable indexed categories
 \[\begin{tikzcd}
	{{\mathsf{Gr}(\mathbf{Set}^r)}} && {{\mathsf{Gr}(\mathbf{Col}^r)}}  \\
	& {\mathcal{C}_{r}}
	\arrow[hook, from=1-1, to=1-3]
	\arrow["{\mathsf{cod}}"', from=1-1, to=2-2]
	\arrow["{\mathsf{cod}}", from=1-3, to=2-2]
\end{tikzcd}\]
described as follows.

 {\bf milly - si chiamano $\pi$  projections- scegliere altro nome}
 \fi
 
\subsection{The category $\CC_r$ of realized collections of $\mathcal{T}$.}
\label{reacol}

A \emph{realized collection} (or simply a \emph{class}) $\mathsf{A}$ of $\mathcal{T}$ is a formal expression $\{x|\,\varphi_{\mathsf{A}}\}$ representing a subclass of natural numbers in $\mathcal{T}$\footnote{In the set theoretical case, this means that $\varphi_{\mathsf{A}}(x)\vdash_{\mathcal{T}}Nat(x)$ where $Nat(x)$ is the formula expressing the fact that $x$ is a natural number in $\mathcal{T}$.} where $\varphi_{\mathsf{A}}$ is a formula of $\mathcal{T}$ with at most $x$ as free variable. 
We write $x\,\varepsilon\, \mathsf{A}$ as an abbreviation for $\varphi_{\mathsf{A}}$. Classes with provably equivalent membership relations in $\mathcal{T}$ are identified. An \emph{operation} between classes of $\mathcal{T}$ from $\mathsf{A}$ to $\mathsf{B}$ is an equivalence class $[\mathbf{n}]_{\approx_{\mathsf{A},\mathsf{B}}}$ of numerals with
$x\,\varepsilon\,\mathsf{A}\vdash\{\mathbf{n}\}(x)\,\varepsilon\,\mathsf{B}$ in $\mathcal{T}$. For such $\mathbf{n}$ and $\mathbf{m}$ we define $\mathbf{n}\approx_{\mathsf{A},\mathsf{B}} \mathbf{m}$ as $x\,\varepsilon \,\mathsf{A}\vdash \{\mathbf{n}\}(x)=\{\mathbf{m}\}(x)$ in $\mathcal{T}$. 

If $[\mathbf{n}]_{\approx_{\mathsf{A},\mathsf{B}}}:\mathsf{A}\rightarrow \mathsf{B}$ and $[\mathbf{m}]_{\approx_{\mathsf{B},\mathsf{C}}}:\mathsf{B}\rightarrow \mathsf{C}$ are operations between classes of $\mathcal{T}$, then their \emph{composition} is defined by $[\mathbf{m}]_{\approx_{\mathsf{B},\mathsf{C}}}\circ [\mathbf{n}]_{\approx_{\mathsf{A},\mathsf{B}}}:=[\Lambda x. \{\mathbf{m}\}(\{\mathbf{n}\}(x))]_{\approx_{\mathsf{A},\mathsf{C}}}$. 

The \emph{identity} operation for the class $\mathsf{A}$ of $\mathcal{T}$ is defined as $\mathsf{id}_{\mathsf{A}}:=[\Lambda x.x]_{\approx_{\mathsf{A},\mathsf{A}}}$.

\begin{dfn}\label{dfn: C_r}\normalfont
We denote with $\CC_{r}$ the category whose objects are realized collections of $\mathcal{T}$ and arrows are operations in $\mathcal{T}$ between them with their composition and identity operations. 
\end{dfn}
 The following result appears in  \cite[Theorem 4.3]{maietti_maschio_2021}.
 
\begin{thm}\label{thm: properties of C_r} $\CC_{r}$ is a finitely complete category with disjoint stable finite coproducts, parameterized list objects, and weak exponentials. Moreover, finite coproducts in $\CC_{r}$ are disjoint and stable.
\qed
\end{thm}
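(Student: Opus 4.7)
The plan is to construct each universal object explicitly as a realized collection, exhibiting primitive recursive trackers for the structural maps and using Kleene's $s$-$m$-$n$ theorem to construct mediating operations from universal data. Throughout, the existence of a tracker for any mediating morphism will reduce to a recursion-theoretic manipulation, not to a genuinely categorical issue, so the proof has the flavour of a long but mechanical check.

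For finite limits I would take the terminal object to be $\{x\,|\,x=0\}$, into which every class maps via $[\Lambda x.0]$; the binary product $\mathsf{A}\times\mathsf{B}$ to be the class of primitive recursive pairs $\langle a,b\rangle$ with $a\,\varepsilon\,\mathsf{A}$ and $b\,\varepsilon\,\mathsf{B}$, tracked by $\pi_1,\pi_2$ and pairing; and the equalizer of $[\mathbf{n}],[\mathbf{m}]:\mathsf{A}\to\mathsf{B}$ to be $\{x\,|\,x\,\varepsilon\,\mathsf{A}\wedge \{\mathbf{n}\}(x)=\{\mathbf{m}\}(x)\}$ with inclusion tracked by the identity. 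Parameterized list objects are obtained, for each $\mathsf{A}$, by taking the class of primitive recursive codes of finite lists of elements of $\mathsf{A}$, using the standard list iterator to track the universal recursor on any pair of operations of the appropriate type.

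For coproducts I would set the initial object to be $\{x\,|\,\bot\}$, with unique operation $[\Lambda x.0]$ out of it, and $\mathsf{A}+\mathsf{B}$ to be the class of tagged pairs $\langle 0,a\rangle$ with $a\,\varepsilon\,\mathsf{A}$ together with $\langle 1,b\rangle$ with $b\,\varepsilon\,\mathsf{B}$; case analysis is tracked by testing the first coordinate. Disjointness is then witnessed by the fact that $\langle 0,a\rangle$ and $\langle 1,b\rangle$ are distinct numerals, and stability follows since pullbacks in $\CC_r$ are computed by conjunction of defining formulas, which commutes with the disjunction defining the coproduct. For weak exponentials I would set $\mathsf{B}^{\mathsf{A}}:=\{n\,|\,\forall x.\,x\,\varepsilon\,\mathsf{A}\Rightarrow \{n\}(x)\,\varepsilon\,\mathsf{B}\}$ with weak evaluation tracked by $\Lambda z.\{\pi_1(z)\}(\pi_2(z))$; the transpose of an operation $\mathsf{C}\times\mathsf{A}\to\mathsf{B}$ exists by $s$-$m$-$n$ but is not unique on the nose, which is precisely why the exponential is only weak.

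The main subtlety is not categorical but metatheoretic: each defining formula must be legitimate in the chosen theory $\mathcal{T}$. In $\tar$ one must verify that every second-order formula used is admissible in the sense required by Feferman's fixpoint scheme, so that the corresponding $P_\varphi$ is available; in the $\mathbf{CZF}$-based metatheories one must instead accept that constructions carrying unbounded quantifiers, such as the weak exponential, produce proper classes rather than sets, and check that the needed operations on codes are still definable. Once these admissibility and class-existence issues are discharged uniformly for all the constructions above, the verification of the universal properties reduces to a routine assembly of realizers from $s$-$m$-$n$ and the primitive recursive operations on pairs, cases, and lists, along the lines familiar from Hyland's original treatment of $\eff$.
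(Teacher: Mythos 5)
Your constructions are correct and match the standard ones underlying this result, which the paper does not prove itself but imports verbatim from Theorem~4.3 of the cited work on $\peff$; in particular your product via primitive recursive pairing agrees with the $\Sigma(\mathsf{A},\mathsf{B})$ encoding the paper recalls later, and your diagnosis that exponentials are only weak because transposes furnished by the $s$-$m$-$n$ theorem are not unique is exactly the point the paper makes after Corollary~\ref{cor: Cr weakly lccc}. One small correction: the admissibility condition of Feferman's fixpoint schema in $\tar$ is irrelevant here, since realized collections may be cut out by \emph{arbitrary} formulas of the metatheory and the fixpoint predicates are needed only later for the universe of sets, so the ``main subtlety'' you flag does not actually arise for this theorem.
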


\subsection{Families of realized collections of $\mathcal{T}$.}
Here, we provide  a functorial account of the slice pseudofunctor over the 
the category of collection $\CC_{r}$ by defining an indexed category
\begin{equation}
\mathbf{Col}^{r}:\CC_{r}^{op}\rightarrow \mathbf{Cat}.
\end{equation}
as follows.

Suppose $\mathsf{A}$ is an object of $\CC_{r}$. A \emph{family of realized collections} $\mathsf{C}$ on $\mathsf{A}$ is a formal expression $\{x'|\,\varphi_{\mathsf{C}}\}$ representing a subclass of natural numbers depending on natural numbers in $\mathcal{T}$\footnote{In the set theoretical case, this means that $\varphi_{C}\vdash_{\mathcal{T}}Nat(x)\wedge Nat(x')$.} where $\varphi_{\mathsf{C}}$ is a formula of $\mathcal{T}$ with at most $x$ and $x'$ as free variables (we write $x'\,\varepsilon\, \mathsf{C}$ as an abbreviation for $\varphi_{\mathsf{C}}$) for which $x'\,\varepsilon\,\mathsf{C}\,\vdash\,x\,\varepsilon\, \mathsf{A}$ in $\mathcal{T}$. Families of realized collections on $\mathsf{A}$ with provably (in $\mathcal{T}$) equivalent membership relations are identified.

An \emph{operation} from a family of realized collections $\mathsf{C}$ on $\mathsf{A}$ to another $\mathsf{D}$ is given by an equivalence class $[\mathbf{n}]_{\approx_{\mathsf{C},\mathsf{D}}}$ of numerals such that in $\mathcal{T}$ we have 
$x'\,\varepsilon\,\mathsf{C}\,\vdash\,\{\mathbf{n}\}(x,x')\,\varepsilon\,\mathsf{D}$
with respect to the equivalence relation defined as follows: $\mathbf{n}\approx_{\mathsf{C},\mathsf{D}} \mathbf{m}$ if and only if $x'\,\varepsilon\, \mathsf{C}\,\vdash_{\mathcal{T}}\,\{\mathbf{n}\}(x,x')=\{\mathbf{m}\}(x,x')$.

 If $\mathsf{f}=[\mathbf{n}]_{\approx_{\mathsf{C},\mathsf{D}}}:\mathsf{C}\rightarrow \mathsf{D}$ and $\mathsf{g}=[\mathbf{m}]_{\approx_{\mathsf{D},\mathsf{E}}}:\mathsf{D}\rightarrow \mathsf{E}$ are operations between families of realized collection on $\mathsf{A}$, then their \emph{composition} is defined as 
 $$\mathsf{g}\circ \mathsf{f}:=[\Lambda x.\Lambda x'.\{\mathbf{m}\}(x,\{\mathbf{n}\}(x,x'))]_{\approx_{\mathsf{C},\mathsf{E}}}:\mathsf{C}\rightarrow\mathsf{E}\footnote{We will omit the subscripts of $\approx$ when they will be clear from the context.}.$$
If $\mathsf{C}$ is a family of realized collections on $\mathsf{A}$, then its \emph{identity} operation is defined by
 $\mathsf{id}_{\mathsf{C}}:=[\Lambda x.\Lambda x'.x']_{\approx_{\mathsf{C},\mathsf{C}}}:\mathsf{C}\rightarrow\mathsf{C}$.

 As discussed in \cite[Theorem 4.6]{maietti_maschio_2021}, for every object $\mathsf{A}$ of $\CC_{r}$, families of realized collections on $\mathsf{A}$ and operations between them together with their composition and identity operations define a category denoted with $\mathbf{Col}^{r}(\mathsf{A})$.

If $\mathsf{f}:=[\mathbf{n}]_{\approx}:\mathsf{A}\rightarrow \mathsf{B}$, then the following assignments give rise to a functor $\mathbf{Col}^{r}_{\mathsf{f}}$ from $\mathbf{Col}^{r}(\mathsf{B})$ to $\mathbf{Col}^{r}(\mathsf{A})$:
\begin{enumerate}
\item for every object $\mathsf{C}$ of $\mathbf{Col}^{r}(\mathsf{B})$, $\mathbf{Col}^{r}_{\mathsf{f}}(\mathsf{C}):=\{x'|\,x\,\varepsilon\, \mathsf{A}\ \wedge\  x'\,\varepsilon\, \mathsf{C}[\{\mathbf{n}\}(x))/x]\ \}$;
\item for every arrow $\mathsf{g}:=[\mathbf{k}]_{\approx}:\mathsf{C}\rightarrow\mathsf{D}$ of $\mathbf{Col}^{r}(\mathsf{B})$,
$$\mathbf{Col}^{r}_{\mathsf{f}}(\mathsf{g}):=[\Lambda x.\Lambda x'. \{\mathbf{k}\}(\{\mathbf{n}\}(x),x')]_{\approx}:{\mathbf{Col}^{r}_{\mathsf{f}}(\mathsf{C})\rightarrow \mathbf{Col}^{r}_{\mathsf{f}}(\mathsf{D})}.$$\end{enumerate}

Moreover, the assignments $\mathsf{A}\mapsto \mathbf{Col}^{r}(\mathsf{A})$ and $\mathsf{f}\mapsto \mathbf{Col}^{r}_{\mathsf{f}}$ define an indexed category

\begin{equation}
\mathbf{Col}^{r}:\CC_{r}^{op}\rightarrow \mathbf{Cat}.
\end{equation}

The functors $\mathbf{Col}^{r}_{\mathsf{f}}$ are also called \emph{substitution functors} and, as proven in \cite[Theorem 4.8]{maietti_maschio_2021}, these functors have left adjoints
\begin{equation}\label{equation: left adjoint Col}
\Sigma_{\mathsf{f}}:\mathbf{Col}^{r}(\mathsf{A})\rightarrow \mathbf{Col}^{r}(\mathsf{B})
\end{equation}
which sends an object $\msf{C}\in\Coll(\Aa)$ to $$\Sigma_{\msf{f}}(\msf{C}):=\{x'|x=\{\mathbf{n}\}(p_1(x')) \wedge \ p_2(x')\ \varepsilon\  \mathsf{C}[p_1(x')/x]\}$$
and an arrow $\msf{g}:=[\mathbf{m}]_{\approx}:\msf{C}\to\msf{D}$ to $$\Sigma_{\msf{f}}(\msf{g}):=[\Lambda x.\Lambda x'. \{\mathbf{pair}\}(\{\mathbf{p}_1\}(x'),\{\mathbf{m}\}(\{\mathbf{p}_1\}(x'),\{\mathbf{p}_2\}(x')))]_{\approx}$$

Furthermore, in \cite[Theorem 4.9]{maietti_maschio_2021} the authors provide also a weak versions of right adjoints
\begin{equation}\label{equation: right adjoint Col}
\Pi_{\mathsf{f}}:\mathbf{Col}^{r}(\mathsf{A})\rightarrow \mathbf{Col}^{r}(\mathsf{B})
\end{equation}

Now we recall an equivalent presentation of families of realized collections, the following result appears as \cite[Theorem 4.12]{maietti_maschio_2021}.

\begin{thm}\label{thm: equivalence Col(A) e Cr/A}
    For every $\mathsf{A}$ in $\CC_{r}$,  $\CC_{r}/\mathsf{A}$ and $\mathbf{Col}^{r}(\mathsf{A})$ are equivalent.
    \qed
\end{thm}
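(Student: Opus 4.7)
The plan is to exhibit an equivalence via a pair of quasi-inverse functors
$F:\Coll(\Aa)\rightleftarrows \CC_r/\Aa:G$
constructed by the evident ``total space / fibre'' correspondence, and then verify that the natural isomorphisms $FG\cong\mathrm{id}$ and $GF\cong\mathrm{id}$ hold after quotienting by $\approx$.

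First I would define $F:\Coll(\Aa)\to\CC_r/\Aa$ as follows. Given a family $\msf{C}=\{x'\mid \varphi_{\msf{C}}(x,x')\}$ on $\Aa$, form the total realized collection
\[
\Sigma\msf{C}\ :=\ \{z\mid p_1(z)\,\varepsilon\,\Aa\ \wedge\ p_2(z)\,\varepsilon\,\msf{C}[p_1(z)/x]\},
\]
and send $\msf{C}$ to the projection $F(\msf{C}):=[\Lambda z.p_1(z)]_{\approx}:\Sigma\msf{C}\to\Aa$. On an arrow $\msf{g}=[\mathbf{k}]_{\approx}:\msf{C}\to\msf{D}$ define $F(\msf{g}):=[\Lambda z.\mathbf{pair}(p_1(z),\{\mathbf{k}\}(p_1(z),p_2(z)))]_{\approx}$, which visibly commutes with the projections to $\Aa$. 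Functoriality reduces to the $\beta$-computations for $p_1,p_2$ and $\mathbf{pair}$ together with the definition of $\approx_{\msf{C},\msf{D}}$.

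Second, I would define the quasi-inverse $G:\CC_r/\Aa\to\Coll(\Aa)$ by taking fibres. Given $\msf{f}=[\mathbf{n}]_{\approx}:\BB\to\Aa$, set
\[
G(\msf{f})\ :=\ \{x'\mid x'\,\varepsilon\,\BB\ \wedge\ \{\mathbf{n}\}(x')=x\},
\]
which is a family on $\Aa$ (note the side condition $x\,\varepsilon\,\Aa$ is forced by $\{\mathbf{n}\}(x')\,\varepsilon\,\Aa$). For a morphism in $\CC_r/\Aa$ represented by $\mathsf{h}=[\mathbf{m}]_{\approx}:\BB\to\BB'$ with $\msf{f}'\circ\mathsf{h}=\msf{f}$ in $\CC_r$, put $G(\mathsf{h}):=[\Lambda x.\Lambda x'.\{\mathbf{m}\}(x')]_{\approx}:G(\msf{f})\to G(\msf{f}')$; the triangle over $\Aa$ gives exactly the realizability condition needed for the target family.

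Third, I would produce natural isomorphisms in both directions. For $FG\cong\mathrm{id}$, given $\msf{f}=[\mathbf{n}]_{\approx}:\BB\to\Aa$, the total space $\Sigma G(\msf{f})$ consists of pairs $(x,x')$ with $x'\,\varepsilon\,\BB$ and $\{\mathbf{n}\}(x')=x$; the maps $[\Lambda z.p_2(z)]_{\approx}:\Sigma G(\msf{f})\to\BB$ and $[\Lambda y.\mathbf{pair}(\{\mathbf{n}\}(y),y)]_{\approx}:\BB\to\Sigma G(\msf{f})$ are inverse to each other up to $\approx$ and commute with the projections to $\Aa$. For $GF\cong\mathrm{id}$, starting from a family $\msf{C}$ one computes $G(F(\msf{C}))=\{x'\mid x'\,\varepsilon\,\Sigma\msf{C}\wedge p_1(x')=x\}$; then $[\Lambda x.\Lambda x'.p_2(x')]_{\approx}$ and $[\Lambda x.\Lambda x'.\mathbf{pair}(x,x')]_{\approx}$ are mutually inverse in $\Coll(\Aa)$, again by the $\beta$-rules for pairing. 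Naturality in each case reduces to a straightforward equality of realizers modulo $\approx$.

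The main obstacle is bookkeeping rather than mathematical depth: every equation has to be checked on representatives and then shown to be independent of the representative, using only the equational behaviour of the primitive recursive combinators $\mathbf{pair},p_1,p_2,\Lambda$ built into $\mathcal{T}$. In particular, care is needed in the $FG$ direction because the pairing $(\{\mathbf{n}\}(y),y)$ introduces a term depending on the realizer of $\msf{f}$, so one must verify that different choices of $\mathbf{n}$ within $[\mathbf{n}]_{\approx}$ produce $\approx$-equal composites; this is automatic since $\{\mathbf{n}\}(y)=\{\mathbf{n}'\}(y)$ whenever $\mathbf{n}\approx_{\BB,\Aa}\mathbf{n}'$, and likewise for the horizontal pasting used in naturality.
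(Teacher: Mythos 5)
Your proposal is correct and follows essentially the same route as the paper: your $F$ and $G$ are precisely the functors $\mathbf{I}_{\mathsf{A}}$ (total space $\Sigma(\mathsf{A},\mathsf{B})$ with first projection) and $\mathbf{J}_{\mathsf{A}}$ (fibre family $\{x'\mid x'\,\varepsilon\,\mathsf{B}\wedge x=\{\mathbf{b}\}(x')\}$) that the paper recalls as the main ingredients of the proof of Theorem~\ref{thm: equivalence Col(A) e Cr/A}, and your unit/counit isomorphisms and representative-independence checks are the expected ones.
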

Therefore $\mathbf{Col}^{r}$ is a functorial account of the slice pseudofunctor on $\CC_{r}$ and hence it provides a split fibration
\[\pi:\mathsf{Gr}(\Coll)\to \CC_{r}\]
%{\bf milly fare disegno di $G(Col^r)$ in orizzontale}

%%%%%%%%%%%%%%%%%%%%%%%%%%%%%%%%

\subsection{Families of realized sets}  We recall from \cite[\S 4.3]{maietti_maschio_2021} that it is possible to define internally in $\CC_{r}$ a universe of sets as a fixpoint of a suitable admissible formula of $\mathcal{T}$.  The universe of sets in $\CC_{r}$ is defined as $\msf{U}_{\mathsf{S}}:=\{x|\;\mathsf{Set}(x)\, \}$ following \cite{mmr21,mmr22} when the meta-theory $\mathcal{T}$ is an extension of $\mathbf{CZF}$.
Instead, when $\mathcal{T}$ is $\tar$, following
\cite{IMMSt}, the universe of sets  is defined in a more complex way as 
$\msf{U}_{\mathsf{S}}:=\{x|\;\mathsf{Set}(x)\,\wedge\,\forall t\,(t\,\overline{\varepsilon}\,x\leftrightarrow \neg\, t\noteps x)\}$  
 with the help of a membership relation $\,\overline{\varepsilon}\,x\ $ and its negation $\neg\, t\noteps x$ defined simultaneously all together (recall that $\tar $ is an arithmetical theory).

\iffalse
The universe of sets is crucial to define the notion of families of sets over $\CC_r$ when $\mathcal{T}$ is $\tar$. Similarly, but more easily, one can define an analogous universe $\msf{H}_{\mathsf{S}}$ in the set-theoretical case via a formula $\mathsf{Set}(x)$ which is defined together with a formula $y\,\overline{\varepsilon}\,x $.\footnote{The properties of the extensions of $\mathbf{CZF}$ we consider allow to avoid the simultaneous definition of a formula $y\noteps x$.}
\fi
%\begin{dfn}\label{dfn: realized sets}\normalfont The category of \emph{realized sets} is the full subcategory $\mathcal{S}_{r}$ of $\CC_{r}$ of which the objects are those realized collections of the form $\{x|\,x\,\overline{\varepsilon}\,\mathbf{n}\}$ for some numeral $\mathbf{n}$ for which $\mathbf{n}\,\varepsilon\, \msf{H}_{\mathsf{S}}$ holds in $\mathcal{T}$.
%\end{dfn}

\begin{dfn}\label{dfn: Set(A)}\normalfont
Let $\mathsf{A}$ be an object of $\CC_{r}$. The category $\mathbf{Set}^{r}(\mathsf{A})$ is the full subcategory of $\mathbf{Col}^{r}(\mathsf{A})$ 
\[\begin{tikzcd}
	{\Set(\mathsf{A})} & {\Coll(\mathsf{A})} 
	\arrow[hook, from=1-1, to=1-2]
 \end{tikzcd}\]
whose objects, called \emph{families of realized sets} on $\mathsf{A}$, are families of realized collections on $\mathsf{A}$ of the form $$\tau_{\mathsf{A}}(\mathbf{n}):=\{x'|\,x'\,\overline{\varepsilon}\, \{\mathbf{n}\}(x)\,\wedge\, x\,\varepsilon \,\mathsf{A}\}$$ for a numeral $\mathbf{n}$ defining an operation 
 $[\mathbf{n}]_{\approx}:\mathsf{A}\rightarrow \mathsf{U_{S}}$ in $\CC_{r}$. 
\iffalse
Objects of $\mathbf{Set}^{r}(\mathsf{A})$ are called \emph{families of realized sets} on $\mathsf{A}$.\fi
\end{dfn}

Applying the Grothendieck construction we obtain the diagram

\[\begin{tikzcd}
	{{\mathsf{Gr}(\mathbf{Set}^r)}} && {{\mathsf{Gr}(\mathbf{Col}^r)}}  \\
	& {\CC_{r}}
	\arrow[hook, from=1-1, to=1-3]
	\arrow["{\pi}"', from=1-1, to=2-2]
	\arrow["{\pi}", from=1-3, to=2-2]
\end{tikzcd}\]

%{\bf milly:serve questo?}

%%%%%%%%%%%%%%%%%%%%%%%%%%%%%%%%%%%%%%%%%
%%%%%%%%%%%%%%%%%%%%%%%%%%%%%%%%%%%%%%%%
\subsection{Categorical properties of realized collections and sets}

We recall the properties of the categories of families of realized collections $\mathbf{Col}^r(\msf{A})$. The following result appears as \cite[Theorem 4.7]{maietti_maschio_2021}.

\begin{thm}\label{thm: properties of Col}
For every object $\mathsf{A}$ of $\CC_{r}$, $\mathbf{Col}^{r}(\mathsf{A})$ is a finitely complete category with parameterized list objects, finite coproducts, and weak exponentials. Moreover for every arrow $\mathsf{f}:\mathsf{A}\rightarrow\mathsf{B}$ in $\CC_{r}$, the functor $\mathbf{Col}^{r}_{\mathsf{f}}$ preserves this structure. \qed
    \end{thm}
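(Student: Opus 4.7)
The plan is to construct each piece of structure directly by adapting to the dependent setting the corresponding constructions in $\CC_{r}$ from Theorem~\ref{thm: properties of C_r}, treating the parameter $x\,\varepsilon\,\mathsf{A}$ as a free passive variable throughout. Concretely, every object $\mathsf{C}$ of $\mathbf{Col}^{r}(\mathsf{A})$ has a membership formula $\varphi_{\mathsf{C}}(x,x')$ entailing $x\,\varepsilon\,\mathsf{A}$, and arrows are represented by numerals coding partial recursive functions of \emph{two} arguments $x,x'$. My aim is to show that the constructions carried out over $\CC_{r}$ admit parameterised counterparts obtained by carrying $x$ along in every membership formula and in every witnessing index.

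First I would treat \textbf{finite limits}. The terminal object is $\{x'\mid x\,\varepsilon\,\mathsf{A}\wedge x'=0\}$ with the unique arrow $[\Lambda x.\Lambda x'.0]_{\approx}$. The binary product of $\mathsf{C},\mathsf{D}\in\mathbf{Col}^{r}(\mathsf{A})$ is $\{x'\mid x\,\varepsilon\,\mathsf{A}\wedge p_1(x')\,\varepsilon\,\mathsf{C}\wedge p_2(x')\,\varepsilon\,\mathsf{D}\}$ with projections given by the primitive recursive decoders, and the equaliser of $\mathsf{f}=[\mathbf{n}]_{\approx},\mathsf{g}=[\mathbf{m}]_{\approx}:\mathsf{C}\to\mathsf{D}$ is $\{x'\mid x'\,\varepsilon\,\mathsf{C}\wedge\{\mathbf{n}\}(x,x')=\{\mathbf{m}\}(x,x')\}$. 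Each universal property reduces to a purely equational manipulation of recursive codes, entirely parallel to the base case of Theorem~\ref{thm: properties of C_r}.

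For \textbf{finite coproducts} I would tag elements by pairing: $\mathsf{C}+_{\mathsf{A}}\mathsf{D}:=\{x'\mid x\,\varepsilon\,\mathsf{A}\wedge((p_1(x')=0\wedge p_2(x')\,\varepsilon\,\mathsf{C})\vee(p_1(x')=1\wedge p_2(x')\,\varepsilon\,\mathsf{D}))\}$, with initial object given by the class $\{x'\mid x\,\varepsilon\,\mathsf{A}\wedge 0=1\}$. For \textbf{parameterised list objects} I would imitate the $\CC_{r}$-construction, producing by the fixpoint schema of $\tar$ (resp.\ by an inductive definition available in $\czf+\mathbf{REA}$) a class $\mathsf{List}(\mathsf{C})$ on $\mathsf{A}$ whose elements are codes of finite sequences of elements in the $x$-fibre of $\mathsf{C}$; nil, cons and the recursor are then given by numerals uniform in $x$. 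For \textbf{weak exponentials} $\mathsf{D}^{\mathsf{C}}$ I would take the class of numerals $\mathbf{e}$ satisfying $x'\,\varepsilon\,\mathsf{C}\vdash_{\mathcal{T}}\{\mathbf{e}\}(x,x')\,\varepsilon\,\mathsf{D}$, with evaluation $[\Lambda x.\Lambda w.\{p_1(w)\}(x,p_2(w))]_{\approx}$; the universal property holds only weakly because $\mathcal{T}$ does not enjoy extensionality of recursive indices.

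The \textbf{preservation} clause is the easy part: for $\mathsf{f}=[\mathbf{n}]_{\approx}:\mathsf{A}\to\mathsf{B}$, the functor $\mathbf{Col}^{r}_{\mathsf{f}}$ merely substitutes $\{\mathbf{n}\}(x)$ for the parameter in the membership formula of every family while leaving all witnessing numerals structurally unchanged. Since each construction above is defined uniformly in the fibre variable $x'$, the canonical comparison between $\mathbf{Col}^{r}_{\mathsf{f}}$ applied to, say, a product, equaliser, coproduct, list object or weak exponential, and the same construction computed after substitution, is carried by the identity numeral and is in fact an equality of classes in $\mathbf{Col}^{r}(\mathsf{A})$. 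The \textbf{main obstacle} I expect is the parameterised list-object clause: one has to pick an admissible fixpoint (or inductive) formula that also depends on $x$, and then prove by list-induction uniform in $x$ that the recursor is represented by a numeral indexed by $x$ satisfying the defining equations. This is not conceptually deeper than the base case of Theorem~\ref{thm: properties of C_r}, but threading the parameter $x$ coherently through the inductive clauses and through the verification of the universal property is the most delicate bookkeeping of the argument.
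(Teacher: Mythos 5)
Your overall strategy---direct fibrewise constructions that carry the parameter $x$ through every membership formula and every witnessing numeral---is the right one and matches how this result is actually established; note that the paper does not reprove the theorem but cites it as Theorem~4.7 of \cite{maietti_maschio_2021}, where the constructions are precisely the parameterised analogues of those for $\CC_{r}$ that you describe, and where preservation under $\mathbf{Col}^{r}_{\mathsf{f}}$ is immediate for the reason you give (substitution acts only on the parameter, so the constructed membership formulas are provably equivalent on the nose). Two points in your write-up need repair, one of them substantive. First, your weak exponential is not well defined as stated: ``the class of numerals $\mathbf{e}$ satisfying $x'\,\varepsilon\,\mathsf{C}\vdash_{\mathcal{T}}\{\mathbf{e}\}(x,x')\,\varepsilon\,\mathsf{D}$'' is an external collection cut out by a meta-level provability condition, not a formal expression $\{x'\mid \varphi(x,x')\}$ with $\varphi$ a formula of $\mathcal{T}$, so it is not an object of $\mathbf{Col}^{r}(\mathsf{A})$ at all. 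You must internalise the condition, taking something like $\{x'\mid x\,\varepsilon\,\mathsf{A}\wedge \forall y\,(y\,\varepsilon\,\mathsf{C}\rightarrow \{x'\}(x,y)\,\varepsilon\,\mathsf{D})\}$, where the clause $\{x'\}(x,y)\,\varepsilon\,\mathsf{D}$ is expressed inside $\mathcal{T}$ via Kleene's $T$-predicate and includes convergence. The transpose of an arrow $[\mathbf{m}]:\mathsf{E}\times\mathsf{C}\to\mathsf{D}$ then provably lands in this class exactly because the defining condition on arrows is $\mathcal{T}$-provability of the corresponding internal statement, and the weak universal property goes through as you indicate. Second, a minor point: the fixpoint schema of $\tar$ (or $\mathbf{REA}$) is not needed for parameterised list objects, since finite sequences of natural numbers are coded primitively recursively and ``every component of $x'$ lies in the $x$-fibre of $\mathsf{C}$'' is a bounded-quantifier formula of $\mathcal{T}$; the recursor is then given by ordinary primitive recursion on the length, uniformly in $x$. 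With these corrections the argument is complete and agrees with the cited proof.
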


We recall the main ingredients of the proof of Theorem \ref{thm: equivalence Col(A) e Cr/A} which will be used in the rest of the paper.
The proof of the above result relies on the definition of the following functors. The first one is 
\begin{equation}\label{equation: functor J}
    \mathbf{J}_{\mathsf{A}}:\CC_{r}/\mathsf{A}\rightarrow \mathbf{Col}^{r}(\mathsf{A})
\end{equation}
 defined by  the assignments
 $$\begin{array}{rcl} [\mathbf{b}]_{\approx}:\mathsf{B}\rightarrow \mathsf{A} & \mapsto\ &  \{x'|\,x'\,\varepsilon\, \mathsf{B}\wedge x=\{\mathbf{b}\}(x')\}\\[3pt]
 [\mathbf{n}]_{\approx}\ \  &\ \mapsto\ &\ [\Lambda x.\Lambda x'.\{\mathbf{n}\}(x')]_{\approx}. \end{array}$$

%{\bf milly - fare spazio negli assegnamenti con $\mapsto$}
The second is the functor 
\begin{equation}\label{equation: functor I}
\mathbf{I}_{\mathsf{A}}:\mathbf{Col}^{r}(\mathsf{A})\rightarrow\CC_{r}/\mathsf{A}
\end{equation}
 defined by the assignments 
$$\begin{array}{rcl}\mathsf{B}\ \ & \mapsto\ & \ \mathsf{p}_{1}^{\Sigma}:=[\mathbf{p}_{1}]_{\approx}:\Sigma(\mathsf{A},\mathsf{B})\rightarrow \mathsf{A}\\[3pt]
[\mathbf{n}]_{\approx}\ \ & \mapsto\ & \ [\Lambda x.\{\mathbf{pair}\}(\{\mathbf{p}_{1}\}(x),\{\mathbf{n}\}(\{\mathbf{p}_{1}\}(x),\{\mathbf{p}_{2}\}(x)))]_{\approx}.
\end{array}$$

where $\Sigma(\mathsf{A},\mathsf{B})$ is the object defined as 
\begin{equation}\label{equation : Sigma}
    \Sigma(\mathsf{A},\mathsf{B}):= \{x|\,p_{1}(x)\,\varepsilon\, \mathsf{A}\wedge p_{2}(x)\,\varepsilon\, \mathsf{B}[p_{1}(x)/x])\}.
\end{equation}

The above Theorem implies the following result which appears as \cite[Corollary 4.13]{maietti_maschio_2021}.

\begin{cor}\label{cor: Cr weakly lccc}
$\CC_r$ is \emph{weakly locally cartesian closed}.
\end{cor}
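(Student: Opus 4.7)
The plan is to derive the corollary directly from Theorem \ref{thm: equivalence Col(A) e Cr/A} together with Theorem \ref{thm: properties of Col}. Recall that ``weakly locally cartesian closed'' means that every slice $\CC_r/\mathsf{A}$ is finitely complete and has weak exponentials; equivalently, for every $\mathsf{f}:\mathsf{A}\to\mathsf{B}$ in $\CC_r$, the pullback functor $\mathsf{f}^{*}:\CC_r/\mathsf{B}\to\CC_r/\mathsf{A}$ admits a weak right adjoint. Since $\CC_r$ has finite limits (Theorem \ref{thm: properties of C_r}), it suffices to produce the weak exponentials (or, equivalently, the weak dependent products) in each slice.

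First I would fix $\mathsf{A}$ in $\CC_r$ and transport structure along the equivalence $\mathbf{I}_\mathsf{A}\dashv\mathbf{J}_\mathsf{A}$ of Theorem \ref{thm: equivalence Col(A) e Cr/A}. By Theorem \ref{thm: properties of Col}, $\mathbf{Col}^{r}(\mathsf{A})$ is finitely complete and carries weak exponentials, and these weak universal properties (existence of a mediating arrow, without uniqueness) are preserved and reflected by any equivalence of categories. Hence $\CC_r/\mathsf{A}$ inherits finite limits and weak exponentials, which already gives weak cartesian closedness of each slice.

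To obtain weak local cartesian closure in the stronger indexed form, I would then verify that, under the equivalence $\mathbf{I}_\mathsf{A}$ and $\mathbf{J}_\mathsf{A}$, the pullback functor $\mathsf{f}^{*}:\CC_r/\mathsf{B}\to\CC_r/\mathsf{A}$ corresponds (up to natural isomorphism) to the substitution functor $\mathbf{Col}^{r}_{\mathsf{f}}$; this is an explicit computation using the definitions of $\mathbf{I}_\mathsf{A}$, $\mathbf{J}_\mathsf{A}$, and $\Sigma(\mathsf{A},\mathsf{B})$ from equation \eqref{equation : Sigma}. Once this identification is in place, the weak right adjoint $\Pi_{\mathsf{f}}$ provided by Theorem 4.9 (equation \eqref{equation: right adjoint Col}) transports to a weak right adjoint of $\mathsf{f}^{*}$ in $\CC_r$.

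The main subtlety will be in carefully handling the word \emph{weak}: since $\Pi_{\mathsf{f}}$ is only a weak right adjoint, the resulting structure on $\CC_r/\mathsf{B}$ satisfies only the existence part of the universal property of dependent products. I would therefore spell out the form of the weak unit/counit after transport along $\mathbf{I}_\mathsf{A}$ and $\mathbf{J}_\mathsf{A}$, checking that the equivalence is compatible with the \emph{weak} naturality of the correspondence $\hom(\mathbf{Col}^{r}_\mathsf{f}(-),-)\cong\hom(-,\Pi_\mathsf{f}(-))$ (existence of transposes only). No further verification is needed, since finite completeness of each slice is immediate from finite completeness of $\CC_r$, and the equivalence of slices with families of realized collections takes care of the rest.
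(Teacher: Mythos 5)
Your proposal is correct and follows exactly the route the paper intends: the corollary is stated as an immediate consequence of the equivalence $\CC_r/\mathsf{A}\simeq\mathbf{Col}^{r}(\mathsf{A})$ of Theorem \ref{thm: equivalence Col(A) e Cr/A} combined with the weak exponentials of Theorem \ref{thm: properties of Col} and the weak right adjoints $\Pi_{\mathsf{f}}$ of \eqref{equation: right adjoint Col}, transported along $\mathbf{I}_{\mathsf{A}}$ and $\mathbf{J}_{\mathsf{A}}$ (with the identification of $\mathsf{f}^{*}$ with $\mathbf{Col}^{r}_{\mathsf{f}}$ guaranteed by the pullback square for $\Sigma(\mathsf{f},\mathsf{C})$). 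Your added care about the word \emph{weak} and about compatibility of substitution with pullback is exactly the content the paper delegates to the cited Lemma 4.11 and Theorem 4.9 of the original reference.
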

Observe that, since every arrow in $\CC_r$  can be represented by many different numerals, the category 
$\CC_r$ is weakly locally cartesian closed but not cartesian closed. %and one cannot carve out of the definable class of natural numbers representing such computable functions between two objects a definable class serving as exponential object, because of limitations due to logic and recursion theory.

Now we recall the properties of the category of families of realized sets. 

%\textcolor{red}{la definizione di representable usa I (che è definita nel paragrafo notazioni più avanti). quindi Ho pensato di richiamare ora Sigma e p1 (anche loro  erano definiti nel paragrafo notazioni più avanti)  e  ho semplificato la nozione di representable senza usare I. In alternativa possiamo richiamare subito I.}

\begin{dfn}\label{dfn: representable}\normalfont
   An arrow $\msf{f}:\mathsf{A}\rightarrow \mathsf{B}$ in $\CC_{r}$ is \emph{set-fibred} if there exists an object $\mathsf{C}$ in $\mathbf{Set}^{r}(\mathsf{B})$ for which $\msf{f}$ is isomorphic in $\CC_{r}/\mathsf{B}$ to $\mathbf{I}_{\mathsf{B}}(\mathsf{C})$.
\end{dfn}

%{\bf milly: cambierei nome con set-fibred-FATTO}

\begin{lemma}\label{rmk: I representable arrow}\normalfont
If $\msf{f}:\mathsf{B}\rightarrow \mathsf{C}$ is an arrow in $ \Set(\mathsf{A})$, then $\mathbf{I}_{\Aa}(\msf{f})$ is a set-fibred arrow.
\begin{proof}
    Take the object $\msf{D}$ in $\Set(\Sigma(\Aa,\msf{C}))$ defined as
    \[\msf{D}:=\{y\ |\ p_1(y)\ \varepsilon\ \msf{B}[p_1(x)/x]\ \wedge p_2(y)=p_2(x) \wedge  p_2(y)=\{\mathbf{n}_\msf{f}\}(p_1(x),p_1(y))\}\]
    then $\II_{\Aa}(\msf{f})$, seen as an object in $\CC_r/(\Sigma(\Aa,\msf{C})$, is isomorphic to $\II_{\Sigma(\Aa,\msf{C})}(\msf{D})$.
\end{proof}
\end{lemma}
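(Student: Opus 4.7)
The plan is to read $\II_\Aa(\msf{f})$ as essentially the first projection out of a $\Sigma$-type built from the proposed family $\msf{D}$. Unfolding the definition of $\II_\Aa$, the arrow $\II_\Aa(\msf{f}):\Sigma(\Aa,\msf{B})\to\Sigma(\Aa,\msf{C})$ sends a pair $z=\langle p_1(z),p_2(z)\rangle$ to $\langle p_1(z),\{\mathbf{n}_\msf{f}\}(p_1(z),p_2(z))\rangle$; its fibre over a point $x\in\Sigma(\Aa,\msf{C})$ therefore consists of those $b\,\varepsilon\,\msf{B}[p_1(x)/x]$ for which $\{\mathbf{n}_\msf{f}\}(p_1(x),b)=p_2(x)$. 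The defining formula of $\msf{D}$ records exactly this fibre, with the extra component $p_2(y)$ serving as a redundant witness enforcing the equation.

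The first task is to verify that $\msf{D}$ is a family of \emph{sets}, not merely of collections, on $\Sigma(\Aa,\msf{C})$. Writing $\msf{B}=\tau_\Aa(\mathbf{b})$ for a code $[\mathbf{b}]:\Aa\to\msf{U}_\mathsf{S}$, I would produce a numeral $\mathbf{n}'$ representing an operation $\Sigma(\Aa,\msf{C})\to\msf{U}_\mathsf{S}$ whose value at $x$ codes the set of pairs satisfying the conjunction in the definition of $\msf{D}$, exhibiting $\msf{D}=\tau_{\Sigma(\Aa,\msf{C})}(\mathbf{n}')$. This relies on the closure of $\msf{U}_\mathsf{S}$ under $\Sigma$-formation and under subsets cut out by equalities of numerals; both closure properties are available in each of the metatheories $\mathcal{T}$ considered, via the construction of $\msf{U}_\mathsf{S}$ in \cite{IMMSt} for $\tar$ and via the regular-extension constructions of \cite{mmr21,mmr22} in the $\czf$-based variants.

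With $\msf{D}$ in place, the remainder of the proof consists in displaying a mutually inverse pair of arrows in $\CC_r/\Sigma(\Aa,\msf{C})$: the forward map sends $z\in\Sigma(\Aa,\msf{B})$ to $\langle\langle p_1(z),\{\mathbf{n}_\msf{f}\}(p_1(z),p_2(z))\rangle,\langle p_2(z),\{\mathbf{n}_\msf{f}\}(p_1(z),p_2(z))\rangle\rangle$, and the backward map sends $w\in\Sigma(\Sigma(\Aa,\msf{C}),\msf{D})$ to $\langle p_1(p_1(w)),p_1(p_2(w))\rangle$. Commutativity with the projections to $\Sigma(\Aa,\msf{C})$ is immediate in one direction and, in the other, follows from the equation $p_2(x)=\{\mathbf{n}_\msf{f}\}(p_1(x),p_1(y))$ built into $\msf{D}$; both composites reduce to the identity modulo the pairing axioms. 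I expect the main obstacle to be the first step — uniformly realizing the equality-subset constraints inside $\msf{U}_\mathsf{S}$ — while the isomorphism itself is a routine manipulation of the pairing and projection combinators already pervasive in \cite{maietti_maschio_2021}.
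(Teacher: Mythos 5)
Your proposal is correct and follows essentially the same route as the paper: you use the same fibre family $\msf{D}$ over $\Sigma(\Aa,\msf{C})$ (with the redundant second component enforcing the equation) and show $\II_\Aa(\msf{f})\cong\II_{\Sigma(\Aa,\msf{C})}(\msf{D})$ in the slice over $\Sigma(\Aa,\msf{C})$. You in fact supply more detail than the paper does — the explicit mutually inverse maps and the verification, via closure of $\msf{U}_{\mathsf{S}}$ under $\Sigma$- and identity-type codes, that $\msf{D}$ is (up to isomorphism, which suffices for set-fibredness) of the form $\tau_{\Sigma(\Aa,\msf{C})}(\mathbf{n}')$ — both of which are left implicit in the published proof.
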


 Using the encoding of $\Sigma$ and $\Pi$ we obtain the following properties about substitution along set-fibred maps.

\begin{lemma}\label{lemma: repesentable left adjoint}
    If $\msf{f}:\msf{A}\to \msf{B}$ is set-fibred then  $\Set_\msf{f}$  has a left adjoint.
    \begin{proof}
        Consider the arrow $\mathsf{p}_{1}^{\Sigma}=[\mathbf{p}_{1}]_{\approx}:\Sigma(\mathsf{B},\tau_{\mathsf{B}}(\mathbf{n}))\rightarrow \mathsf{B}$. Then 
$$\Sigma_{\mathsf{p}_{1}^{\Sigma}}(\tau_{\Sigma(\mathsf{B},\tau_{\mathsf{B}}(\mathbf{n}))}(\mathbf{m}))\simeq \tau_{\mathsf{B}}(\Lambda x. \{\boldsymbol{\sigma}\}( \{\mathbf{n}\}(x),\Lambda y. \{\mathbf{m}\}(\{\mathbf{pair}\}(x,y))))$$
where $\boldsymbol{\sigma}$ is a numeral representing the construction of codes of $\Sigma$-types in the universe of realized sets \cite{IMMSt}.
%{\bf richiamare codice per interpretare il sigma e dirlo}
    \end{proof}
\end{lemma}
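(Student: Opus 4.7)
The plan is to exploit the left adjoint $\Sigma_\msf{f} \dashv \Coll_\msf{f}$ already provided at the level of collections by \eqref{equation: left adjoint Col} and show that, when $\msf{f}$ is set-fibred, this left adjoint restricts to families of realized sets. Since having a left adjoint depends only on the isomorphism class of $\msf{f}$ in $\CC_r/\msf{B}$, and $\Set_\msf{f}$ is likewise invariant under such isomorphisms, I may assume without loss of generality that $\msf{f}$ is the projection $\msf{p}_1^\Sigma : \Sigma(\msf{B},\tau_\msf{B}(\mathbf{n})) \to \msf{B}$ described in \eqref{equation : Sigma}, for some numeral $\mathbf{n}$ with $[\mathbf{n}]_\approx : \msf{B} \to \msf{U}_\msf{S}$.

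Next, I would take a generic family of realized sets $\tau_{\Sigma(\msf{B},\tau_\msf{B}(\mathbf{n}))}(\mathbf{m}) \in \Set(\Sigma(\msf{B},\tau_\msf{B}(\mathbf{n})))$, and unfold $\Sigma_{\msf{p}_1^\Sigma}$ on it using the explicit formula of \eqref{equation: left adjoint Col}. The elementwise description forces the fibre of the result over $x\,\varepsilon\,\msf{B}$ to consist of codes of pairs $\langle y,z\rangle$ with $y\,\overline{\varepsilon}\,\{\mathbf{n}\}(x)$ and $z\,\overline{\varepsilon}\,\{\mathbf{m}\}(\{\mathbf{pair}\}(x,y))$. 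This is exactly the extension of the dependent sum of two set-indexed families inside the universe $\msf{U}_\msf{S}$, so invoking the closure of $\msf{U}_\msf{S}$ under $\Sigma$-type formation, realized by the numeral $\boldsymbol{\sigma}$ of \cite{IMMSt} in the $\tar$ case and by the analogous construction of \cite{mmr21,mmr22} in the $\czf$-based metatheories, gives the isomorphism
$$\Sigma_{\msf{p}_1^\Sigma}(\tau_{\Sigma(\msf{B},\tau_\msf{B}(\mathbf{n}))}(\mathbf{m})) \simeq \tau_\msf{B}\bigl(\Lambda x.\{\boldsymbol{\sigma}\}(\{\mathbf{n}\}(x),\Lambda y.\{\mathbf{m}\}(\{\mathbf{pair}\}(x,y)))\bigr)$$
in $\Coll(\msf{B})$, whose right-hand side lies in $\Set(\msf{B})$ by Definition~\ref{dfn: Set(A)}.

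Finally, since $\Set(\msf{B}) \hookrightarrow \Coll(\msf{B})$ is a full subcategory and $\Set_\msf{f}$ is the restriction of $\Coll_\msf{f}$ to families of realized sets, the adjunction $\Sigma_\msf{f} \dashv \Coll_\msf{f}$ transports to the desired adjunction $\Sigma_\msf{f} \dashv \Set_\msf{f}$, provided, as just shown, that $\Sigma_\msf{f}$ sends set-families to set-families. The main obstacle is the explicit verification underlying the displayed isomorphism: one must produce a numeral $\boldsymbol{\sigma}$ coding $\Sigma$-formation inside $\msf{U}_\msf{S}$ and check that the representative computed by \eqref{equation: left adjoint Col} agrees with the one produced by the universe encoding up to the fibrewise equivalence $\approx$. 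This is a careful but routine bookkeeping exercise; the conceptual content is that a set-fibred map exposes its base change as a dependent sum against a code in $\msf{U}_\msf{S}$, so the left adjoint $\Sigma_\msf{f}$ can be carried out entirely internally to the universe of realized sets.
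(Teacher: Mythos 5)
Your proposal is correct and follows essentially the same route as the paper: reduce to the case where $\msf{f}$ is the projection $\mathsf{p}_{1}^{\Sigma}:\Sigma(\mathsf{B},\tau_{\mathsf{B}}(\mathbf{n}))\rightarrow \mathsf{B}$, compute $\Sigma_{\mathsf{p}_{1}^{\Sigma}}$ on a generic family of realized sets via the explicit formula \eqref{equation: left adjoint Col}, and identify the result with the $\Sigma$-type code $\{\boldsymbol{\sigma}\}$ in the universe $\msf{U}_\msf{S}$, so that the collection-level left adjoint restricts to $\Set$. The paper's proof is just a terser version of the same argument, leaving the reduction and the restriction-of-adjunction step implicit.
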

\begin{lemma}\label{lemma: representable right adjoint}
    If $\mathsf{f}:\mathsf{A}\rightarrow \mathsf{B}$ in $\CC_{r}$ is set-fibred,  for every object $\mathsf{C}$ in $\mathbf{Set}^{r}(\mathsf{A})$ the object $\Pi^{r}_{\mathsf{f}}(\mathsf{C})$ is isomorphic in $\mathbf{Col}^{r}(\mathsf{B})$ to an object of $\mathbf{Set}^{r}(\mathsf{B})$.
    \begin{proof}
Consider the arrow $\mathsf{p}_{1}^{\Sigma}=[\mathbf{p}_{1}]_{\approx}:\Sigma(\mathsf{B},\tau_{\mathsf{B}}(\mathbf{n}))\rightarrow \mathsf{B}$. Then, considering the definition of $\Pi^r_{\msf{f}}$ in \cite[Theorem 4.9]{maietti_maschio_2021}, we get that
$$\Pi_{\mathsf{p}_{1}^{\Sigma}}(\tau_{\Sigma(\mathsf{B},\tau_{\mathsf{B}}(\mathbf{n}))}(\mathbf{m}))\simeq \tau_{\mathsf{B}}(\Lambda x. \{\boldsymbol{\pi}\}( \{\mathbf{n}\}(x),\Lambda y. \{\mathbf{m}\}(\{\mathbf{pair}\}(x,y))))$$
where $\boldsymbol{\pi}$ is a numeral representing the construction of codes of $\Pi$-types in the universe of realized sets \cite{IMMSt}.
\end{proof}
\end{lemma}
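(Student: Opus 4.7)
\medskip

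The plan is to reduce to a canonical form and then use the closure of the universe $\msf{U}_\Ss$ of realized sets under $\Pi$-types. Since $\msf{f}$ is set-fibred, by Definition~\ref{dfn: representable} there exists a numeral $\mathbf{n}$ such that $\msf{f}$ is isomorphic in $\CC_r/\msf{B}$ to $\II_\msf{B}(\tau_\msf{B}(\mathbf{n}))$, which under the equivalence of Theorem~\ref{thm: equivalence Col(A) e Cr/A} corresponds to the first projection $\msf{p}_1^\Sigma : \Sigma(\msf{B},\tau_\msf{B}(\mathbf{n})) \to \msf{B}$. Because the functor $\Pi^r_\msf{f}$ is defined up to isomorphism through this equivalence, I may replace $\msf{f}$ by $\msf{p}_1^\Sigma$ without loss of generality; simultaneously the family $\msf{C}$ on $\msf{A}$ transports to a family $\tau_{\Sigma(\msf{B},\tau_\msf{B}(\mathbf{n}))}(\mathbf{m})$ on the new base, for some numeral $\mathbf{m}$ classifying it into $\msf{U}_\Ss$.

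Next I would unfold the definition of $\Pi_{\msf{p}_1^\Sigma}$ from \cite[Theorem 4.9]{maietti_maschio_2021}: the class underlying $\Pi_{\msf{p}_1^\Sigma}(\tau_{\Sigma(\msf{B},\tau_\msf{B}(\mathbf{n}))}(\mathbf{m}))$ over an element $x\,\varepsilon\,\msf{B}$ consists (up to realizer manipulations) of codes for functions assigning, to every $y\,\overline{\varepsilon}\,\{\mathbf{n}\}(x)$, an element $\overline{\varepsilon}\,\{\mathbf{m}\}(\{\mathbf{pair}\}(x,y))$. The goal is now to see this as an object of $\mathbf{Set}^r(\msf{B})$, that is, to exhibit a single numeral which, applied to $x$, gives a code in $\msf{U}_\Ss$ for this dependent product.

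This is achieved by the key ingredient: the universe of realized sets is closed under $\Pi$-type formation, witnessed by a numeral $\boldsymbol{\pi}$ such that $\{\boldsymbol{\pi}\}(a, \Lambda y.\, b(y))$ is a code in $\msf{U}_\Ss$ for the $\Pi$-type whose domain is coded by $a$ and whose $y$-th fibre is coded by $b(y)$. Existence of $\boldsymbol{\pi}$ is established in \cite{IMMSt} when $\mathcal{T} = \tar$ (where $\msf{U}_\Ss$ is built via the simultaneous fixpoint definition of $\overline{\varepsilon}$ and $\noteps$) and analogously in \cite{mmr21,mmr22} in the $\mathbf{CZF}$-extensions case. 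Granting $\boldsymbol{\pi}$, one sets $\mathbf{k} := \Lambda x.\{\boldsymbol{\pi}\}(\{\mathbf{n}\}(x), \Lambda y.\{\mathbf{m}\}(\{\mathbf{pair}\}(x,y)))$ and checks that $\tau_\msf{B}(\mathbf{k})$ is isomorphic in $\mathbf{Col}^r(\msf{B})$ to the class computed in the previous paragraph, with explicit back-and-forth operations given by the standard $\Pi$-introduction and elimination numerals of the universe.

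The only nontrivial step is the invocation of $\boldsymbol{\pi}$: everything else is a bookkeeping computation with realizers and the $\Sigma$-encoding. In particular, the proof splits cleanly into a \emph{reduction} (using set-fibredness and the $\II/\JJ$ equivalence) and an \emph{encoding} (using closure of $\msf{U}_\Ss$ under $\Pi$), mirroring exactly the proof of Lemma~\ref{lemma: repesentable left adjoint} with $\boldsymbol{\sigma}$ replaced by $\boldsymbol{\pi}$.
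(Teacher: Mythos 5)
Your proposal is correct and follows essentially the same route as the paper's proof: reduce via set-fibredness to the canonical projection $\mathsf{p}_{1}^{\Sigma}:\Sigma(\mathsf{B},\tau_{\mathsf{B}}(\mathbf{n}))\to\mathsf{B}$, then exhibit the code $\Lambda x. \{\boldsymbol{\pi}\}( \{\mathbf{n}\}(x),\Lambda y. \{\mathbf{m}\}(\{\mathbf{pair}\}(x,y)))$ using the numeral $\boldsymbol{\pi}$ witnessing closure of the universe of realized sets under $\Pi$-types. You merely spell out the reduction and the final isomorphism check, which the paper leaves implicit.
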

%\textcolor{red}{Se scriviamo quest'ultima dimostrazione, non dovremmo richiamare anche la definizione di $\Pi$?????}
%{\bf si' dire come sopra che $\pi$ rappresenta il codice del dependent product da articoli precedenti o \cite{maietti_maschio_2021}}

Using set-fibred arrows, we obtain the following extension of \cite[Theorem 4.24]{maietti_maschio_2021}.

\begin{thm}\label{thm: dependent collections}
For every $\mathsf{A}$ in $\CC_{r}$, $\mathbf{Set}^{r}(\mathsf{A})$ is a finitely complete category with disjoint stable finite coproducts, parameterized list objects and weak exponentials. 
Moreover, if $\mathsf{f}:\mathsf{A}\rightarrow \mathsf{B}$ is set-fibred, then $\mathbf{Set}^{r}_{\mathsf{f}}$ has a left adjoint and for every object $\mathsf{C}$ in $\mathbf{Set}^{r}(\mathsf{A})$, $\Pi^{r}_{\mathsf{f}}(\mathsf{C})$ is isomorphic in $\mathbf{Col}^{r}(\mathsf{B})$ to an object of $\mathbf{Set}^{r}(\mathsf{B})$.

The embedding of $\mathbf{Set}^{r}$ in $\mathbf{Col}^{r}$ preserves finite limits, finite coproducts,  parameterized list objects, and weak exponentials. 
\qed\end{thm}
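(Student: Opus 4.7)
The plan is to reduce all four claims to one fact: the universe of sets $\msf{U}_\Ss$ is closed, with uniformly realized codes, under the type-theoretic constructors corresponding to each categorical operation. Since $\Set(\Aa)$ is by definition the \emph{full} subcategory of $\Coll(\Aa)$ whose objects are $\tau_\Aa(\mathbf{n})$ with $[\mathbf{n}]_\approx:\Aa\to\msf{U}_\Ss$, it suffices to show that any construction performed in $\Coll(\Aa)$ on such objects is isomorphic, in $\Coll(\Aa)$, to a further $\tau_\Aa(-)$. Fullness then automatically yields both the universal property inside $\Set(\Aa)$ and its preservation under the embedding into $\Coll(\Aa)$.

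First I would treat finite limits constructor by constructor. The terminal object comes from the code in $\msf{U}_\Ss$ for the singleton set. The binary product of $\tau_\Aa(\mathbf{n}_1)$ and $\tau_\Aa(\mathbf{n}_2)$ is isomorphic to $\tau_\Aa(\Lambda x.\{\boldsymbol{\sigma}\}(\{\mathbf{n}_1\}(x),\Lambda y.\{\mathbf{n}_2\}(x)))$, exactly in the style of Lemma \ref{lemma: repesentable left adjoint}; equalizers are obtained by combining $\boldsymbol{\sigma}$ with the code for the identity/inhabitation set. In the same spirit, finite coproducts and parameterized list objects use the codes for $+$ and $\mathrm{List}$ provided by the universe construction of \cite{IMMSt, mmr21, mmr22}, and weak exponentials arise by applying the $\Pi$-code $\boldsymbol{\pi}$ to a constant family, as in Lemma \ref{lemma: representable right adjoint}. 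Disjointness and stability of coproducts in $\Set(\Aa)$ then descend from the corresponding properties of $\Coll(\Aa)$, which are inherited slicewise from Theorem \ref{thm: properties of C_r} through the equivalence of Theorem \ref{thm: equivalence Col(A) e Cr/A}, since a full embedding that creates the relevant pullbacks and coproducts reflects disjointness and stability.

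For the second part, given a set-fibred $\msf{f}:\Aa\to\BB$, Definition \ref{dfn: representable} gives an isomorphism in $\CC_r/\BB$ between $\msf{f}$ and $\II_\BB(\tau_\BB(\mathbf{n}))$, which in turn is, by construction of $\II$, the first projection $\mathsf{p}_1^\Sigma:\Sigma(\BB,\tau_\BB(\mathbf{n}))\to\BB$. Since both $\Set_{(-)}$ and the existence of adjoints depend only on the isomorphism class of the arrow, I can replace $\msf{f}$ by $\mathsf{p}_1^\Sigma$; Lemma \ref{lemma: repesentable left adjoint} then exhibits the left adjoint to $\Set_\msf{f}$, while Lemma \ref{lemma: representable right adjoint} shows that $\Pi^r_\msf{f}(\msf{C})$ is isomorphic in $\Coll(\BB)$ to an object of $\Set(\BB)$.

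The main obstacle I expect is the bookkeeping needed to exhibit, uniformly in the parameter $x\,\varepsilon\,\Aa$, primitive recursive realizers implementing each universal property up to the internal equivalence $\approx$, and to verify that the universe $\msf{U}_\Ss$ really carries codes and computation rules for every constructor used. For the $\czf$ extensions this closure is direct from $\mathbf{REA}$ and $\bigcup\text{-}\mathbf{REA}$; for $\tar$ it is subtler, because $\msf{U}_\Ss$ is defined by the simultaneous fixpoint on $\overline{\varepsilon}$ and $\noteps$ of \cite{IMMSt}, and one must check that all the needed codes $\boldsymbol{\sigma}$, $\boldsymbol{\pi}$, $\boldsymbol{+}$, and the list code are admissible in that simultaneous definition. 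Once those codes are fixed, the isomorphisms above reduce to routine transport through the functors $\II_\Aa$ and $\JJ_\Aa$.
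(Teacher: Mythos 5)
Your proposal is correct and follows essentially the same route as the paper: the structural part is obtained by showing that the universe $\msf{U}_\Ss$ carries codes for each constructor so that constructions performed in $\Coll(\Aa)$ on objects $\tau_\Aa(\mathbf{n})$ land back, up to isomorphism, in the full subcategory $\Set(\Aa)$ (this is the content of the cited Theorem 4.24 of \cite{maietti_maschio_2021}, which the theorem extends), while the adjoints along set-fibred arrows are handled exactly as in Lemmas \ref{lemma: repesentable left adjoint} and \ref{lemma: representable right adjoint} by replacing $\msf{f}$ with the projection $\mathsf{p}_1^\Sigma$ and invoking the codes $\boldsymbol{\sigma}$ and $\boldsymbol{\pi}$. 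Your remarks on the fullness of the embedding and on the admissibility of the codes in the simultaneous fixpoint defining $\msf{U}_\Ss$ in $\tar$ correctly identify where the real verification lies.
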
  

\iffalse
Observe that the above categorical structure in $\CC_{r}$ is defined out of categorical interpretations of suitable fragments of Martin-Loef's type theory and the intensional level of the Minimalist Foundation.\fi
%%%%%%%%%%%%%%%%%%%%%%%%%%%%%%%
\subsection{Realized propositions.}

Now we recall the notion of realized propositions on $\CC_{r}$.

\begin{dfn}\label{dfn: Prop^r}\normalfont
The indexed category $\overline{\Prop}:\CC_{r}^{op}\rightarrow \mathbf{Pos}$ is defined as the poset reflection of $\mathbf{Col}^{r}$.
In this case we write $x'\Vdash \mathsf{P}$ instead of $x'\,\varepsilon\, \mathsf{P}$ for an object $\mathsf{P}$ in $\mathbf{Col}^{r}(\mathsf{A})$ seen as a representative of an object $[\msf{P}]$ of $\overline{\Prop}(\mathsf{A})$.
\end{dfn}

%{\bf chi e' $P$ ?? rispetto a $P(x)$?? sarebbe meglio usare P per rappresentante e scrivere la logica con P come nell'articolo su JLS p. }

We recall the following definition:
\begin{dfn}\normalfont
If $\CC$ is a finitely complete category, the doctrine of its weak subobjects $\mathbf{wSub}_{\CC}$ is the posetal reflection of the slice pseudofunctor associated with $\CC$.
\end{dfn}

As proved in \cite[Theorem 4.19]{maietti_maschio_2021}, realized propositions correspond to weak subobjects.

\begin{thm}\label{weaksub}
   The functor $\overline{\Prop}$ is a first-order hyperdoctrine naturally isomorphic to the doctrine of weak subobjects $\mathbf{wSub}_{\CC_r}$ of $\CC_r$.
   \qed
\end{thm}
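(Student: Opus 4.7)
The plan is to derive the natural isomorphism as a formal consequence of the equivalences recalled in Theorem~\ref{thm: equivalence Col(A) e Cr/A} and then harvest the hyperdoctrine structure from the categorical properties of $\CC_r$ collected in Theorem~\ref{thm: properties of C_r} and Corollary~\ref{cor: Cr weakly lccc}.

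First, I would observe that posetal reflection is a 2-functor from categories (resp.\ indexed categories) to posets (resp.\ indexed posets), hence it sends equivalences to isomorphisms. By Theorem~\ref{thm: equivalence Col(A) e Cr/A}, for every object $\mathsf{A}$ of $\CC_r$ the functors $\mathbf{I}_{\mathsf{A}}$ and $\mathbf{J}_{\mathsf{A}}$ exhibit $\mathbf{Col}^{r}(\mathsf{A})\simeq \CC_{r}/\mathsf{A}$, and the explicit formulas recalled in \eqref{equation: functor J} and \eqref{equation: functor I} show that this equivalence is pseudonatural in $\mathsf{A}\in \CC_r$, i.e.\ it commutes (up to canonical isomorphism) with the reindexing functors $\mathbf{Col}^{r}_{\mathsf{f}}$ on the one side and pullback along $\mathsf{f}$ on the other. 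Taking the posetal reflection fiberwise therefore yields the desired natural isomorphism $\overline{\Prop}\cong \mathbf{wSub}_{\CC_r}$, since by Definition~\ref{dfn: Prop^r} the left-hand side is by construction the posetal reflection of $\mathbf{Col}^r$, while the right-hand side is the posetal reflection of the slice pseudofunctor $\CC_r/(-)$.

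Second, I would verify that $\mathbf{wSub}_{\CC_r}$ is a first-order hyperdoctrine by exploiting the structure of $\CC_r$. Each fiber $\mathbf{wSub}_{\CC_r}(\mathsf{A})$ is the posetal reflection of $\CC_r/\mathsf{A}$, so finite meets come from the terminal object and pullbacks in the slice (which exist since $\CC_r$ is finitely complete), finite joins come from the disjoint stable finite coproducts supplied by Theorem~\ref{thm: properties of C_r}, Heyting implication is induced by the weak exponentials in the slices guaranteed by Corollary~\ref{cor: Cr weakly lccc} (weak exponentials become genuine exponentials under posetal reflection since any two weak evaluations are identified), and the false element is the empty coproduct. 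Reindexing along $\mathsf{f}:\mathsf{A}\to\mathsf{B}$ is pullback, which preserves all this structure by stability; its left adjoint $\exists_{\mathsf{f}}$ is post-composition with $\mathsf{f}$, and its right adjoint $\forall_{\mathsf{f}}$ is produced by the weak right adjoints to pullback that exist in the weakly locally cartesian closed category $\CC_r$ (again, weakness disappears after posetal reflection).

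Finally, I would check the Beck--Chevalley condition for both quantifiers on pullback squares in $\CC_r$: for $\exists$ this is automatic from pasting of pullbacks, and for $\forall$ it follows by adjointness from the Beck--Chevalley condition for $\exists$. The main obstacle I expect is the careful verification that weak exponentials and weak dependent products in $\CC_r$ genuinely descend to strict Heyting implications and universal quantifiers after posetal reflection, and that the pseudonaturality squares in the transfer from $\mathbf{Col}^r$ to $\CC_r/(-)$ become strictly commuting after posetal reflection; once this is granted, the rest of the argument is a routine bookkeeping check following the pattern of \cite{maietti_maschio_2021}.
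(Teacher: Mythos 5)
Your argument is correct and is essentially the route the paper relies on: the theorem is recalled here by citation to \cite[Theorem 4.19]{maietti_maschio_2021}, and the ingredients the paper assembles around it --- the fiberwise equivalences $\mathbf{I}_{\msf{A}}$, $\mathbf{J}_{\msf{A}}$ of Theorem~\ref{thm: equivalence Col(A) e Cr/A}, the pullback square for $\Sigma(\msf{f},\msf{C})$ witnessing pseudonaturality, and the weak structure from Theorem~\ref{thm: properties of C_r} and Corollary~\ref{cor: Cr weakly lccc} --- are exactly the ones you use. Your two key observations, that pseudonaturality squares and weak universal properties (weak exponentials, weak dependent products) become strict after posetal reflection, are precisely the points on which the cited proof turns, so no gap remains.
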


If $\msf{f}:\Aa\to \BB$ is an arrow in $\CC_r$, the left and right adjoint to $\overline{\Prop}_\msf{f}$ will be denoted with $\exists_{\msf{f}}$ and $\forall_{\msf{f}}$. respectively.

%%%%%%%%%%%%%%%%%%%%%%%%%%%%%
\subsection{Realized small propositions.}

\begin{dfn}\label{dfn: dependent set}\normalfont The functor $\overline{\Props}:\CC_{r}^{op}\rightarrow \mathbf{Pos}$ is defined as the poset reflection of the indexed category $\mathbf{Set}^{r}$. Then, fibre objects of $\overline{{\Props}}$ are called {\it small realized propositions}\footnote{Note that here we use the same notation of \cite{maietti_maschio_2021} for the indexed categories $\mathbf{Set}^{r}$ and $\overline{{\Props}}$, that were defined in \emph{loc.\ cit.}\ on the subcategory $\mathcal{S}_r$ of $\CC_r$ of \emph{realized sets}.}.
\end{dfn}
We now obtain the following theorem {which is a straightforward extension of} \cite[Theorem 4.27]{maietti_maschio_2021}.

\begin{thm}\label{thm: small Prop^r properties} 
For every object $\mathsf{A}$ in $\CC_{r}$, $\overline{\Props}(\mathsf{A})$ is a Heyting algebra
and for every arrow $\mathsf{f}$ in $\CC_{r}$, $(\overline{\Props})_{\mathsf{f}}$ is a homomorphism of Heyting algebras.

Moreover, if  $\mathsf{f}:\mathsf{A}\rightarrow \mathsf{B}$ in $\CC_{r}$ is set-fibred, then $(\overline{\Props})_{\mathsf{f}}$ has left $\exists_\msf{f}$ and right  $\forall_\msf{f}$ adjoints satisfying Beck-Chevalley conditions.
\qed
\end{thm}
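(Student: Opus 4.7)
The plan is to derive everything by taking the posetal reflection of the set-level structure established in Theorem \ref{thm: dependent collections} and transferring the known properties for $\overline{\Prop}$ (Theorem \ref{weaksub}) down to the subdoctrine $\overline{\Props}$.

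First I would construct the Heyting algebra operations fibrewise. Given $\msf{A}$ in $\CC_r$, Theorem \ref{thm: dependent collections} equips $\Set(\msf{A})$ with a terminal object, binary products, initial object, binary coproducts, and weak exponentials. After passing to the poset reflection, the universal property of the weak exponential $\msf{B}^\msf{C}$ becomes the usual right-adjoint characterization of implication: $[\msf{D}]\le [\msf{B}^\msf{C}]$ iff $[\msf{D}\times \msf{C}]\le [\msf{B}]$, so weak exponentials collapse into genuine Heyting implication. The remaining lattice operations are inherited as meets (from products), joins (from coproducts), top (from the terminal object), and bottom (from the initial object); the distributive laws required for a Heyting algebra follow because finite coproducts in $\Set(\msf{A})$ are stable under pullback.

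Next, for an arbitrary $\msf{f}:\msf{A}\to\msf{B}$, Theorem \ref{thm: dependent collections} tells us that $\Set^r_\msf{f}$ preserves finite limits, finite coproducts, and weak exponentials. Passing to posetal reflections, these preservations translate into $(\overline{\Props})_\msf{f}$ preserving meets, joins, top, bottom, and implication, i.e.\ it is a Heyting algebra homomorphism.

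For the adjoints, I would assume $\msf{f}$ is set-fibred. By Lemma \ref{lemma: repesentable left adjoint}, $\Set^r_\msf{f}$ has a left adjoint which on objects is the restriction of $\Sigma_\msf{f}$ (from equation \eqref{equation: left adjoint Col}); by Lemma \ref{lemma: representable right adjoint}, $\Pi^r_\msf{f}$ takes families of realized sets to objects isomorphic in $\Coll(\msf{B})$ to families of realized sets, so it restricts (up to iso) to a right adjoint of $\Set^r_\msf{f}$. Posetally reflecting these adjunctions yields $\exists_\msf{f}\dashv(\overline{\Props})_\msf{f}\dashv\forall_\msf{f}$. Because the existential and universal quantifiers on the ambient doctrine $\overline{\Prop}$ already satisfy Beck-Chevalley (Theorem \ref{weaksub}, as $\overline{\Prop}\cong\mathbf{wSub}_{\CC_r}$) and because the inclusion $\overline{\Props}\hookrightarrow\overline{\Prop}$ is induced by the fullness of $\Set^r\hookrightarrow\Coll^r$ and commutes with the substitutions, the Beck-Chevalley squares for $\overline{\Props}$ are restrictions of those for $\overline{\Prop}$.

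The one point requiring care is the verification that $\exists_\msf{f}$ and $\forall_\msf{f}$ computed in $\overline{\Props}$ really coincide with those in $\overline{\Prop}$ when $\msf{f}$ is set-fibred, because only for such $\msf{f}$ do Lemmas \ref{lemma: repesentable left adjoint}--\ref{lemma: representable right adjoint} guarantee that $\Sigma_\msf{f}$ and $\Pi^r_\msf{f}$ send small propositions to (isomorphs of) small propositions; this is precisely where the codes $\boldsymbol{\sigma}$ and $\boldsymbol{\pi}$ for $\Sigma$- and $\Pi$-types in the universe $\msf{U}_\msf{S}$ are used. I expect this step, namely checking that the adjoints and the Beck-Chevalley squares restrict cleanly from $\overline{\Prop}$ to $\overline{\Props}$, to be the main technical obstacle, while the Heyting algebra structure on each fibre is a routine consequence of posetally reflecting Theorem \ref{thm: dependent collections}.
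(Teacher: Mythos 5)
Your proposal is correct and follows essentially the route the paper intends: the paper states this theorem as a straightforward extension of \cite[Theorem 4.27]{maietti_maschio_2021}, obtained by posetally reflecting the categorical structure of $\mathbf{Set}^{r}$ from Theorem~\ref{thm: dependent collections} (weak exponentials becoming genuine Heyting implication in the reflection) and by using Lemmas~\ref{lemma: repesentable left adjoint} and~\ref{lemma: representable right adjoint} to obtain $\exists_{\msf{f}}$ and $\forall_{\msf{f}}$ along set-fibred arrows via the codes $\boldsymbol{\sigma}$ and $\boldsymbol{\pi}$ in the universe $\msf{U}_{\msf{S}}$. You correctly identify the only delicate point, namely that the quantifiers of $\overline{\Props}$ are computed as restrictions of those of $\overline{\Prop}$ so that Beck--Chevalley is inherited.
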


%%%%%%%%%%%%%%%%%%%%%%%%%%%%%%
\subsection{Further notation.} For every $\mathsf{f}:=[\mathbf{n}]_{\approx}:\mathsf{A}\rightarrow \mathsf{B}$ in $\CC_{r}$ and every $\mathsf{C}$

%{\bf milly: qui e per le proposizioni si passa dalla notazione dependent C(x) a C  come prima da P(x) a P e direi di adottare la notazione dipendente soltanto}

in $\mathbf{Col}^{r}(\mathsf{B})$, we denote with $\Sigma(\mathsf{f},\mathsf{C})$ the arrow $[\Lambda x. \{\mathbf{pair}\}(\{\mathbf{n}\}(\{\mathbf{p}_{1}\}(x)),\{\mathbf{p}_{2}\}(x))\, ]_{\approx}$ from $\Sigma(\mathsf{A},\mathbf{Col}_{\msf{f}}^r(\mathsf{C}))$ to $\Sigma(\mathsf{B},\mathsf{C})$. This arrow makes the following diagram a pullback in 
$\CC_{r}$

%\textcolor{red}{\bf milly-- qui sopra c'e' pair come notazione di coppia e direi di usare questa anche nel resto dell'articolo}
% https://q.uiver.app/#q=WzAsNCxbMCwwLCJcXFNpZ21hKFxcbWF0aHNme0F9LFxcbWF0aGJme0NvbH1fe1xcbWF0aHNme2Z9fV57cn0oXFxtYXRoc2Z7Q30pKSJdLFsyLDAsIlxcU2lnbWEoXFxtYXRoc2Z7Qn0sXFxtYXRoc2Z7Q30pIl0sWzAsMSwiXFxtYXRoc2Z7QX0iXSxbMiwxLCJcXG1hdGhzZntCfSJdLFswLDEsIlxcU2lnbWEoXFxtYXRoc2Z7Zn0sXFxtYXRoc2Z7Q30pIl0sWzAsMiwiXFxtYXRoc2Z7cH1fezF9XntcXFNpZ21hfSIsMl0sWzIsMywiXFxtYXRoc2Z7Zn0iLDJdLFsxLDMsIlxcbWF0aHNme3B9X3sxfV57XFxTaWdtYX0iXV0=
$$\begin{tikzcd}
	{\Sigma(\mathsf{A},\mathbf{Col}_{\mathsf{f}}^{r}(\mathsf{C}))} && {\Sigma(\mathsf{B},\mathsf{C})} \\
	{\mathsf{A}} && {\mathsf{B}}
	\arrow["{\Sigma(\mathsf{f},\mathsf{C})}", from=1-1, to=1-3]
	\arrow["{\mathsf{p}_{1}^{\Sigma}}"', from=1-1, to=2-1]
	\arrow["{\mathsf{p}_{1}^{\Sigma}}", from=1-3, to=2-3]
	\arrow["{\mathsf{f}}"', from=2-1, to=2-3]
\end{tikzcd}$$
for details see \cite[Lemma 4.11]{maietti_maschio_2021}.

 Now let $\msf{A}$ be an object in $\CC_r$ and let $\msf{B}$ be an object of $\mathbf{Col}^r(\msf{A})$. If $\msf{C}$ is an object in $\mathbf{Col}^r(\Sigma(\msf{A}, \msf{B}))$, then we define 
 $\Sigma^{\msf{A}}(\msf{B},\msf{C})$ as the object in $\mathbf{Col}^r(\msf{A})$ given by
 $\Sigma_{\msf{p}^\Sigma_1}(\msf{C})$.

 Moreover, for every arrow $\msf{f}: \msf{B}'\to\msf{B}$ in $\textbf{Col}^r(\msf{A})$, we define  $\Sigma^{\msf{A}}(\msf{f}, \msf{C})$ as the arrow $$\Lambda x. \Lambda x'. \{\mathbf{pair}\} (\{\mathbf{pair}\}(x,\{\mathbf{n}_\msf{f}\}(x,\{\mathbf{p_2}\}(\{\mathbf{p}_1\}(x'))),\{\mathbf{p_2}\}(x') )$$
 which makes the following diagram a pullback in $\Coll(\mathsf{A})$

%\textcolor{red}{\bf milly controllare parentesi nel display, sopra il secondo x e' x'}
% \textcolor{red}{\bf milly: il secondo x dovrebbe essere x' perche' x e' tipato solo A }
 $$% https://q.uiver.app/#q=WzAsNCxbMCwwLCJcXFNpZ21hXlxcbWF0aHNme0F9KFxcbWF0aHNme0J9JyxcXG1hdGhiZntDb2x9X3tcXG1hdGhzZntmfX1ee3J9KFxcbWF0aHNme0N9KSkiXSxbMiwwLCJcXFNpZ21hXlxcbWF0aHNme0F9KFxcbWF0aHNme0J9LFxcbWF0aHNme0N9KSJdLFswLDEsIlxcbWF0aHNme0J9JyJdLFsyLDEsIlxcbWF0aHNme0J9Il0sWzAsMSwiXFxTaWdtYV5cXG1hdGhzZntBfShcXG1hdGhzZntmfSxcXG1hdGhzZntDfSkiXSxbMCwyLCJcXG1hdGhzZntwfV97MX1ee1xcbWF0aHNme0F9LFxcU2lnbWF9IiwyXSxbMiwzLCJcXG1hdGhzZntmfSIsMl0sWzEsMywiXFxtYXRoc2Z7cH1fezF9XntcXG1hdGhzZntBfSxcXFNpZ21hfSJdXQ==
 \begin{tikzcd}
 	{\Sigma^\mathsf{A}(\mathsf{B}',\mathbf{Col}_{\mathsf{f}}^{r}(\mathsf{C}))} && {\Sigma^\mathsf{A}(\mathsf{B},\mathsf{C})} \\
 	{\mathsf{B}'} && {\mathsf{B}}
 	\arrow["{\Sigma^\mathsf{A}(\mathsf{f},\mathsf{C})}", from=1-1, to=1-3]
 	\arrow["{\mathsf{p}_{1}^{\mathsf{A},\Sigma}}"', from=1-1, to=2-1]
 	\arrow["{\mathsf{p}_{1}^{\mathsf{A},\Sigma}}", from=1-3, to=2-3]
 	\arrow["{\mathsf{f}}"', from=2-1, to=2-3]
 \end{tikzcd}$$

where $\msf{p}_1^{\msf{A},\Sigma}:=[\Lambda x.\Lambda x'. \{\mathbf{p_2}\}(\{\mathbf{p_1}\}(x'))]_\approx$.

We now define a \emph{currying} isomorphism  in $\CC_r$
for every object $\msf{A}\in \CC_r$,  $\msf{B}\in \Coll(\msf{A})$ and $\msf{C}\in \mathbf{Col}^r(\Sigma(\msf{A}, \msf{B}))$
\[\msf{cur}: \Sigma(\Sigma (\msf{A}, \msf{B}), \msf{C})\to \Sigma(\msf{A},\Sigma^\msf{A}(\mathsf{B},\mathsf{C}))\]
given by

\begin{equation}\label{equation: cur}
    \Lambda x. \{\mathbf{pair}\}(\{\mathbf{p}_1\}(\{\mathbf{p}_1\}(x)),x).
\end{equation}
%\textcolor{green}

%%%%%%%%%%%%%%%%%%%%%%%%%%%%%%%%%%%%%%%%%%%%%%%%%%%%%%%%%%%%%%%%%%%%%%%%%%%%%%%%%%%%%%%%%%%%%%%%%%%%%%%%%%%%%%%%%%%%%%%%%%%%%%%%%%%%%%%%%%%%%%%%%%%%%%%%%%%%%%%%%%%%%%%%%%%%%%%%%%%%%%%%%%%%%%%%%%%%%%%%%%%%%%%%       NEW SECTION
\section{An internal language for $\Prop$ and $\Coll$}
{
As done in \cite{VOO08}, we consider the internal language of the functors $\Prop$ and $\Coll$. The former is in the form of a first-order typed language with equality, and the latter is in the form of one with dependent types.

A  context $\Gamma$ is an expression of the form $[x_0:\msf{A}_0, \dots, x_n:\msf{A}_n(x_1,\dots, x_{n-1})]$. A substitution of contexts $\bar{t}:\Gamma \to \Delta$, where $\Delta:=[y_0:\msf{B}_0, \dots, y_m:\msf{B}_m(y_0,\dots,y_{m-1})]$ is a list of terms $\bar{t}:=[t_0, \dots, t_m]$ such that $t_i:\msf{B}_i(t_0,\dots,t_{i-1})$ is in context $[\Gamma]$ for $0\le i\le m$.

We will adopt the notation $\msf{C}_x(y)$ for a type in context $x:\Aa, y:\msf{B}(x)$ and $\msf{f}_x:\msf{B}(x)\to \msf{B}'(x)$ for a term of type $\msf{B}'(x)$ in context $x:\msf{A}, y:\msf{B}(x)$. 

%\textcolor{red}{\bf milly: qui sopra a B(x)}

Recall from \cite{maietti_maschio_2021}, that contexts $\Gamma$ are interpreted as objects $\|\Gamma\|$ of the category $\CC_r$ and substitutions of contexts as arrows of $\CC_r$. A formula $\psi[\Gamma]$ in context $\Gamma$ is interpreted as an element of $\Prop(\|\Gamma\|)$, while a dependent type $\msf{A}[\Gamma]$ in context $\Gamma$ is interpreted as an object of $\Coll(\|\Gamma\|)$. Given the interpretations $\|\bar{t}\|:\|\Gamma\|\to \|\Delta\|$ and $\|\psi[\Delta]\|\in \Prop(\|\Delta\|)$, or $\|\msf{A}[\Delta]\|\in \Coll(\|\Delta\|)$, then the interpretation of $\psi(\bar{t})[\Gamma]$, or  $\Aa(\bar{t})[\Gamma]$, is respectively given by $\Prop_{\|t\|}(\|\psi\|)$, or $\Coll_{\|t\|}(\|\Aa\|)$.

Furthermore, we assume that given an interpretation of $\msf{A}[\Gamma]$ as an object $\|\Aa\|\in\Coll(\|\Gamma\|)$, then the interpretation of the context $[\Gamma, x:\Aa]$ is given by the object $\Sigma(\|\Gamma\|,\|\Aa\|)$ of $\CC_r$. In particular, given a substitution $\bar{t}:\Gamma\to \Delta$ and a type in context $\msf{A}[\Delta]$, then the substitution $[\bar{t},y]:[\Gamma,y:\Aa(\bar{t})]\to [\Delta, z:\Aa]$ is interpreted as the arrow 
\[\Sigma(\|\bar{t}\|, \|\Aa\|):\Sigma(\|\Gamma\|, \Coll_{\|\bar{t}\|}(\|\Aa\|))\to \Sigma(\|\Delta\|,\|\Aa\|)\]
If $\Aa[\Gamma]$ and $\msf{B}[\Gamma]$ are types in the same context and $f:\Aa\to \msf{B}$ is a term in context $[\Gamma, x:\Aa]$, then $f$ is interpreted as an arrow $\|f\|:\|\Aa\|\to \|\msf{B}\|$ in $\Coll(\|\Gamma\|)$ and the substitution $[id_\Gamma,f]:[\Gamma,x:\Aa]\to [\Gamma,y:\msf{B}]$ is interpreted as the arrow
\[\II_{\|\Gamma\|}(\|f\|):\Sigma(\|\Gamma\|,\|\Aa\|)\to \Sigma(\|\Gamma\|,\|\msf{B}\|)\]
%and if we have $\msf{B}[\Delta]$, an interpretation $\|\bar{t}\|:\|\Gamma\|\to \|\Delta\|$ and a term $h:\msf{B}(t_0,\dots,t_m)$ a term in context $\Gamma$, then the interpretation of $(t_0,\dots,t_m, h)$ is given by $\II$

If $\psi[\Gamma]$ is a formula in context, we define its validity $\Prop\vdash\psi[\Gamma]$ if and only if $\top_{\|\Gamma\|}\le_{\|\Gamma\|}\|\psi[\Gamma]\|$. Similarly, for a type $\Aa[\Gamma]$ in context,  we define $\Coll\vdash \Aa[\Gamma] $ if and only if there exists an arrow $1_{\|\Gamma\|}\to \|\msf{A}[\Gamma]\|$.

Since, the interpretation of the internal language of $\Prop$ extends Kleene realizability interpretation $\Vdash$ of $\mathsf{HA}$ (see \cite[Lemma 4.2]{maietti_maschio_2021}), and the elements of $\Prop(\|\Gamma\|)$ are equivalence classes of those in $\Coll(\|\Gamma\|)$, we shall adopt the notation $t\Vdash \msf{A}$ for a term $t:\Aa$ in context $\Gamma$ of a representative $\Aa$ of some proposition in $\Prop(\|\Gamma\|)$, while we shall adopt the notation $t\varepsilon \msf{A}$ for a term $t:\Aa$ in context $\Gamma$ when we do not refer to any underlying proposition.

}
%%%%%%%%%%%%%%%%%%%%%%%%%%%%%%%%%%%%%%%%%%%%%%%%%%%%%%%%%%%%%%%%%%%%%%%%%%%%%%%%%%%%%%%%%%%%%%%%%%%%%%%%%%%%%%%%%%%%%%%%%%%%%%%%%%%%%%%%%%%%%%%%%%%%%%%%%%%%%%%%%%%%%%%%%%%%%%%%%%%%%%%%%%%%%%%%%%%%%%%%%%%%%%%%       NEW SECTION

\section{The predicative effective topos $\peff$}\label{section: descent}

In \cite{maietti_maschio_2021}, the predicative effective topos $\peff$ is defined as the base category $\mathcal{Q}_{\overline{\mathbf{Prop}^r}}$ of the elementary quotient completion of the elementary doctrine $\overline{\mathbf{Prop}^r}$. It is the category whose objects are the pairs $(\mathsf{A},[\mathsf{R}])$ where $\mathsf{A}$ is an object of the category $\CC_{r}$ and $[\mathsf{R}]$ is an object of $\overline{\mathbf{Prop}^{r}}(\mathsf{A}\times \mathsf{A})$ such that 
\begin{enumerate}
\item $\top_\Aa\le_\Aa\overline{\mathbf{Prop}^{r}}_{\Delta_\Aa}([\RR])$;
\item $[\RR]\le_{\Aa\times \Aa}\overline{\mathbf{Prop}^{r}}_{\langle p_2,p_1\rangle}([\RR])$, where $p_1,p_2:\Aa\times \Aa\to \Aa$ are the projections;
\item $\overline{\mathbf{Prop}^{r}}_{\langle p_1,p_2\rangle}([\RR]) \wedge \overline{\mathbf{Prop}^{r}}_{\langle p_2,p_3\rangle}([\RR])\le_{\Aa\times \Aa\times \Aa}\overline{\mathbf{Prop}^{r}}_{\langle p_1,p_3\rangle}([\RR])$,  where $p_1,p_2,p_3:\Aa\times \Aa\times \Aa\to \Aa$ are the  projections;
\end{enumerate}
Equivalently, the above conditions state that $[\RR]$ is an equivalence relation for the \emph{internal logic} of the doctrine $\overline{\mathbf{Prop}^{r}}$, see \cite{maietti_maschio_2021}.

\iffalse
for which the following hold in the internal logic of $\overline{\mathbf{Prop}^r}$:
	\begin{enumerate}
		\item $\overline{\mathbf{Prop}^{r}}\vdash(\forall x\in \mathsf{A})[\mathsf{R}](x,x)$;
		\item $\overline{\mathbf{Prop}^{r}}\vdash(\forall x\in \mathsf{A})(\forall y\in \mathsf{A})([\mathsf{R}](x,y)\rightarrow [\mathsf{R}](y,x))$;
		\item $\overline{\mathbf{Prop}^{r}}\vdash(\forall x\in \mathsf{A})(\forall y\in \mathsf{A})(\forall z\in \mathsf{A})([\mathsf{R}](x,y)\wedge [\mathsf{R}](y,z)\rightarrow [\mathsf{R}](x,z))$.
	\end{enumerate}

{\bf milly: cosa e' $[R](x,y)$? bisognerebbe scrivere $R(x,y)$}
 \fi
	An arrow from $(\mathsf{A},[\mathsf{R}])$ to $(\mathsf{B},[\mathsf{S}])$ is an equivalence class $[\mathsf{f}]$ of arrows $\mathsf{f}:\mathsf{A}\rightarrow \mathsf{B}$ in $\CC_{r}$ such that $[\RR]\le_{\Aa\times \Aa}\overline{\mathbf{Prop}^{r}}_{\msf{f}\times \msf{f}}([\mathsf{S}])$, where $\mathsf{f}\sim \mathsf{g}$ if and only if $\top_A\le_\Aa\overline{\mathbf{Prop}^{r}}_{\langle \mathsf{f},\mathsf{g}\rangle} ([\mathsf{S}])$.

  Moreover, we recall from \cite{maietti_maschio_2021} that $\peff$ is equivalent to the exact completion $(\CC_r)_{ex/lex}$ of the left exact category $\CC_r$. Indeed, this follows from theorem~\ref{weaksub} and the fact that the elementary quotient completion of a weak subobject doctrine on a lex category $C$ is equivalent to the exact on lex completion of $C$ (see \cite{qu12,elqu}).

The following appears as \cite[Theorem 5.5]{maietti_maschio_2021}.
\begin{thm}\label{thm: peff lcc pretops list}
    The category $\peff$ is a locally cartesian closed list arithmetic pretopos.
\end{thm}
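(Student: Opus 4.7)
The plan is to exploit the identification $\peff\simeq(\CC_r)_{ex/lex}$ recalled just before the statement and to transfer each required piece of structure from $\CC_r$ through the exact-on-lex completion, appealing to the general transfer results of \cite{car,carboniceliamagno,carbonivitale} (and the elementary-quotient-completion versions in \cite{qu12,elqu}). Thus the proof splits into three independent checks: (i) $\peff$ is exact with finite disjoint stable coproducts, (ii) $\peff$ has parameterized list objects, (iii) $\peff$ is locally cartesian closed.

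For (i), exactness is automatic: the exact completion of any lex category is exact, so every equivalence relation in $\peff$ is effective. The finite disjoint and stable coproducts of $\CC_r$ provided by Theorem~\ref{thm: properties of C_r} are in particular weak finite coproducts in the lex sense; the standard transfer result then lifts them to honest finite disjoint stable coproducts in $(\CC_r)_{ex/lex}$. Together with exactness this yields the pretopos part of the statement.

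For (ii), the parameterized list objects in $\CC_r$ from Theorem~\ref{thm: properties of C_r} serve as weak parameterized list objects inside the completion; then, since $\peff$ is exact and has stable finite coproducts, one quotients the relevant kernel pair to promote them to genuine parameterized list objects, exactly as in the standard ex/lex construction of natural-numbers objects.

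Step (iii) is the main obstacle, and the one that truly requires the new material of the paper. Here I would invoke the characterization that $C_{ex/lex}$ is locally cartesian closed as soon as $C$ is weakly locally cartesian closed; the hypothesis is provided by Corollary~\ref{cor: Cr weakly lccc}, which was obtained via the weak $\Sigma$/$\Pi$ structure in Theorems~\ref{equation: left adjoint Col}--\ref{equation: right adjoint Col} and the equivalence of Theorem~\ref{thm: equivalence Col(A) e Cr/A}. The delicate point is checking that the weak dependent products in $\CC_r$ really satisfy the hypotheses of the cited transfer theorem in our constructive/predicative metatheory $\mathcal{T}$, i.e.\ that the representatives chosen via numerals do interact correctly with quotients by pseudo-equivalence relations; once this is verified, the weak $\Pi$-types of $\CC_r$ become genuine dependent products in $\peff$. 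Combining (i)--(iii) gives the theorem.
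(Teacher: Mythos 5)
Your argument is essentially the intended one: the paper gives no proof of this statement itself, simply citing \cite[Theorem 5.5]{maietti_maschio_2021}, but it recalls immediately beforehand that $\peff\simeq(\CC_r)_{ex/lex}$ precisely so that the pretopos, list-object, and local cartesian closure structure can be transferred from $\CC_r$ via the standard ex/lex results---which is exactly your decomposition (i)--(iii), with Corollary~\ref{cor: Cr weakly lccc} feeding the weak-dependent-products criterion for local cartesian closure of the exact completion. The same transfer argument (exactness, extensivity, parameterized list objects, and local cartesian closure of $(\CC_r)_{ex/lex}$, preserved under slicing) is spelled out later in the proof of Theorem~\ref{thm: pretopos}, so your proposal matches the paper's approach.
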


\section{Families of collections and sets over $\mathbf{pEff}$}

In this section, we define suitable categories for every object $(\msf{A},[\msf{R}]) \in \peff$ that are apt to interpret the notions of \emph{families of collections} and \emph{sets} in $\peff$. Slice categories provide a natural account of the notion of \emph{families of collections} over an object $(\msf{A},[\msf{R}]) \in \peff$; however, families of sets require additional considerations to be defined. To do that,
%we exploit the equivalence recalled in the previous paragraph between the slice category $\peff/(\msf{A},[\msf{R}])$ and the category of actions of a suitable internal groupoid $\mathpzc{Des}(p,\bar{\msf{R}})$. Hence
 by analogy with \cite{m09}, we define for every representative of $[\msf{R}]$ a category of \emph{extensional dependent collections} $\Dep(\msf{A},{\msf{R}})$ and show %, as was already claimed in \cite{m09}, 
that it is equivalent to the category of suitable internal groupoid actions $\mathpzc{Des}(p,{\msf{R}})$. Since the objects of $\Dep(\msf{A},{\msf{R}})$ are defined through the indexed categories $\Coll$ and $\Prop$, we can easily define a notion of \emph{extensional dependent sets} and equivalently obtain a subcategory of $\peff/(\msf{A},[\msf{R}])$, independently of the representative ${\msf{R}}$.

The intuitive idea behind the definition of a family of collections 
over $(\mathsf{A},\mathsf{R})$ is the following: 
\begin{itemize}
\item first we take a family of realized collections $\mathsf{B}(a)$ over $\mathsf{A}$  in $\CC_{r}$;
\item then, we take an equivalence relation $\mathsf{S}_{a}(b,b')$ on each $\msf{B}(a)$ (depending on $\mathsf{A}$);
\item furthermore, we take a family of transport maps $b\mapsto \sigma_{a,a'}(r,b)$ for $a,a'\,\varepsilon\, \mathsf{A}$, $b\,\varepsilon\, B(a)$ and $r\Vdash\mathsf{R}(a,a')$ such that $\sigma_{a,a'}(r,b)\,\varepsilon\, \BB(a')$;

%{\bf milly: scriverei
%$\sigma_{a,a'}(r)(b)$

%o meglio

%$\sigma_{a,a'} (b)$
%perch\`e e' inteso indipendente da $r$,
%vedi le identita'...

%e poi occorre tipare il dominio di $b$
%}
\item Finally, we ask $\sigma$ to satisfy the following conditions, which makes it a sort of action:
%\textcolor{red}{\bf dare un nome e dire che inducono azioni su ??}

\begin{enumerate}
\item if $S_{a}(b,b')$ and $r\Vdash\mathsf{R}(a,a')$, then $\msf{S}_{a'}(\sigma_{a,a'}(r,b),\sigma_{a,a'}(r,b'))$, that is $\sigma$ respects $\mathsf{S}$;

\item if $r\Vdash\mathsf{R}(a,a)$ and $b\,\varepsilon\, \mathsf{B}(a)$, then $\msf{S}_{a}(b,\sigma_{a,a}(r,b)))$, that is $\sigma$ is well-behaved with respect to identities;

%{\bf milly: e' questa la condizione che implica l'indipendenza da r dei trasport  e  a rigore qui
%si dovrebbe parlare di "identities"..
%}
\item if $r\Vdash\mathsf{R}(a,a')$, $r'\Vdash\mathsf{R}(a',a'')$, $r''\Vdash\mathsf{R}(a,a'')$  and $b\,\varepsilon\, \mathsf{B}(a)$, then $$\msf{S}_{a''}(\sigma_{a,a''}(r'',b),\sigma_{a',a''}(r',\sigma_{a,a'}(r,b))))$$ that is $\sigma$ is well-behaved with respect to composition.
\end{enumerate}
Observe that,  taking $a'=a''$ and 
%$\overline{r}$
a witness for the reflexivity of $\RR$, conditions 2.\ and 3.\ above
imply the following condition stating that $\sigma$, once we fix two related objects $a,a'$, does not depend on the particular realizer of $\mathsf{R}(a,a')$:
\begin{enumerate}
\item[4.] ${\msf{S}}_{a'}(\sigma_{a,a'}(r,b),\sigma_{a,a'}(r',b))$ for every $b\,\varepsilon\, \BB(a)$, $a,a'\varepsilon \Aa$, $r,r'\Vdash \RR(a,a')$;
\end{enumerate}

\end{itemize}

To formulate this in a precise way this idea we need first to introduce the notion of fibered ${\overline{\textbf{Prop}^r}}$-equivalence relation:

\begin{dfn}\normalfont
	Let $\msf{A}$ be an object of $\CC_r$ and let $\msf{B}$ be an object of $\textbf{Col}^r(\msf{A})$. A \emph{fibered ${\overline{\mathbf{Prop}}^r}$-equivalence relation on $\msf{B}$}, or simply \emph{a fibered equivalence relation on} $\msf{B}$, is an object $[\msf{S}]$ in ${\overline{\textbf{Prop}^r}}(\Sigma(\msf{A},\msf{B}\times \msf{B}))$ such that 
	\begin{enumerate}
		\item $\top_{\Sigma(\msf{A},\msf{B})}\le_{\Sigma(\msf{A},\msf{B})}{\overline{\textbf{Prop}^r}}_{\textbf{I}_\msf{A}(\Delta_\msf{B})}([\msf{S}])$
		\item $[\msf{S}]\le_{\Sigma(\msf{A},\msf{B}\times \msf{B})}{\overline{\textbf{Prop}^r}}_{\textbf{I}_\msf{A}(\langle p_2,p_1\rangle)}([\msf{S}])$
		\item ${\overline{\textbf{Prop}^r}}_{\textbf{I}_\msf{A}(\langle p_1,p_2\rangle)}([\msf{S}])\wedge {\overline{\textbf{Prop}^r}}_{\textbf{I}_\msf{A}(\langle p_2,p_3\rangle)}([\msf{S}])\le_{\Sigma(\msf{A},\msf{B}\times \msf{B} \times \msf{B})} {\overline{\textbf{Prop}^r}}_{\textbf{I}_\msf{A}(\langle p_1,p_3\rangle)}([\msf{S}])$.
	\end{enumerate}
\end{dfn}

%\textcolor{red}{\bf milly -- che ne dite di chiamarla "fibred"??e poi dire che la diagonale si indica con Delta}

\begin{dfn}\label{dfn: actions 2}\normalfont
Let $(\mathsf{\msf{A}},[\mathsf{R}])$ be an object of $\textbf{pEff}$.  An \emph{extensional dependent collection} on $(\mathsf{\msf{A}} , [\mathsf{R}])$, with respect to ${\RR}$,  is a triple $(\msf{B},[\msf{S}],\sigma)$ where $\msf{B}\in\textbf{Col}^r(\msf{A})$, $[\msf{S}]$ is a  dependent ${\overline{\textbf{Prop}^r}}$-equivalence relation on $\msf{B}$ and 
	$$\sigma: \textbf{Col}^r_{p_1}(\msf{B})\times\RR  \to \textbf{Col}^r_{p_2}(\msf{B}) $$ 
	is an arrow in $\textbf{Col}^r(\msf{A}\times \msf{A})$ satisfying
 \begin{enumerate}
     \item\label{item: pseudo action 1} $\sigma$ preserves $[\msf{S}]$ i.e.\ for the arrow
     % https://q.uiver.app/#q=WzAsMyxbMCwwLCJcXFNpZ21hKFxcbXNmIHtBfSxcXG1zZntCfVxcdGltZXNcXG1zZntCfSJdLFsyLDEsIlxcU2lnbWEoXFxtc2Z7QX1cXHRpbWVzIFxcbXNme0F9LFxcdGV4dGJme0NvbH1ecl97cF8yfShcXG1zZntCfVxcdGltZXNcXG1zZntCfSkgIl0sWzAsMSwiXFxTaWdtYShcXG1zZntBfVxcdGltZXMgXFxtc2Z7QX0sXFx0ZXh0YmZ7Q29sfV5yX3twXzF9KFxcbXNme0J9XFx0aW1lc1xcbXNme0J9KSBcXHRpbWVzIFxcbXNme1J9JyJdLFsyLDAsIlxcU2lnbWEocF8xLFxcbXNme0J9XFx0aW1lc1xcbXNme0J9KVxcY2lyY1xcdGV4dGJme0l9KHBfMSkiXSxbMSwwLCJcXFNpZ21hKHBfMixcXG1zZntCfVxcdGltZXNcXG1zZntCfSkiLDJdLFsyLDEsIlxcdGV4dGJme0l9KChcXHNpZ21hXFxjaXJjIFxcbGFuZ2xlIHBfMSwgcF8zXFxyYW5nbGUpXFx0aW1lcyAoXFxzaWdtYVxcY2lyY1xcbGFuZ2xlIHBfMixwXzNcXHJhbmdsZSkpIiwyXV0=
% https://q.uiver.app/#q=WzAsMyxbMSwxLCJcXFNpZ21hKFxcbXNmIHtBfSxcXG1zZntCfVxcdGltZXNcXG1zZntCfSkiXSxbMiwwLCJcXFNpZ21hKFxcbXNme0F9XFx0aW1lcyBcXG1zZntBfSxcXHRleHRiZntDb2x9XnJfe3BfMn0oXFxtc2Z7Qn1cXHRpbWVzXFxtc2Z7Qn0pICJdLFswLDAsIlxcU2lnbWEoXFxtc2Z7QX1cXHRpbWVzIFxcbXNme0F9LFxcdGV4dGJme0NvbH1ecl97cF8xfShcXG1zZntCfVxcdGltZXNcXG1zZntCfSkgXFx0aW1lcyBcXG1zZntSfSciXSxbMiwwLCJcXFNpZ21hKHBfMSxcXG1zZntCfVxcdGltZXNcXG1zZntCfSlcXGNpcmNcXHRleHRiZntJfShwXzEpIiwyXSxbMSwwLCJcXFNpZ21hKHBfMixcXG1zZntCfVxcdGltZXNcXG1zZntCfSkiXSxbMiwxLCJcXHRleHRiZntJfSgoXFxzaWdtYVxcY2lyYyBcXGxhbmdsZSBwXzEsIHBfM1xccmFuZ2xlKVxcdGltZXMgKFxcc2lnbWFcXGNpcmNcXGxhbmdsZSBwXzIscF8zXFxyYW5nbGUpKSIsMl1d
\[\begin{tikzcd}[column sep= large]
	{\Sigma(\msf{A}\times \msf{A},\textbf{Col}^r_{p_1}(\msf{B}\times\msf{B}) \times \RR)} &&& {\Sigma(\msf{A}\times \msf{A},\textbf{Col}^r_{p_2}(\msf{B}\times\msf{B})) } 
		\arrow["{\textbf{I}((\sigma\circ \langle p_1, p_3\rangle)\times (\sigma\circ\langle p_2,p_3\rangle))}", from=1-1, to=1-4]
\end{tikzcd}\]
it holds
$$\overline{\textbf{Prop}^r}_{{\Sigma(p_1,\msf{B}\times\msf{B})\circ\textbf{I}(p_1)}}[\msf{S}]\le \overline{\textbf{Prop}^r}_{\Sigma(p_2,\msf{B}\times\msf{B})\circ {\textbf{I}((\sigma\circ \langle p_1, p_3\rangle)\times (\sigma\circ\langle p_2,p_3\rangle))}}[\msf{S}]$$ where $\Sigma(p_i,\BB\times\BB):\Sigma(\msf{A}\times \msf{A},\textbf{Col}^r_{p_1}(\msf{B}\times\msf{B}) )\to {\Sigma(\msf {A},\msf{B}\times\msf{B})}$, for $i=1,2$.% are the projections forgetting the $\RR$ component.

\item\label{item: pseudo action 3} $\sigma$ preserves the identities,   i.e. for the unique arrow $\rho$ making the following diagram commute
% https://q.uiver.app/#q=WzAsNCxbMCwxLCJcXFNpZ21hKFxcbXNme0F9LFxcbXNme0J9IFxcdGltZXMgXFx0ZXh0YmZ7Q29sfV5yKFxcbXNme1J9KSkiXSxbMiwwLCJcXFNpZ21hKFxcbXNme0F9LFxcbXNme0J9KSJdLFsyLDIsIlxcU2lnbWEoXFxtc2Z7QX0sXFxtc2Z7Qn0pIl0sWzIsMSwiXFxTaWdtYShcXG1zZntBfSxcXG1zZntCfVxcdGltZXMgXFxtc2Z7Qn0pIl0sWzAsMSwiXFx0ZXh0YmZ7SX0ocF8xKSJdLFswLDIsIlxcdGV4dGJme0l9KFxcdGV4dGJme0NvbH1ecl9cXERlbHRhXFxzaWdtYSkiLDJdLFszLDEsIlxcdGV4dGJme0l9KHBfMSkiLDJdLFszLDIsIlxcdGV4dGJme0l9KHBfMikiXSxbMCwzLCJcXGJhclxccmhvIiwwLHsic3R5bGUiOnsiYm9keSI6eyJuYW1lIjoiZGFzaGVkIn19fV1d
\[\begin{tikzcd}
	&& {\Sigma(\msf{A},\msf{B})} \\
	{\Sigma(\msf{A},\msf{B} \times \textbf{Col}^r_\Delta(\RR))} && {\Sigma(\msf{A},\msf{B}\times \msf{B})} \\
	&& {\Sigma(\msf{A},\msf{B})}
	\arrow["{\textbf{I}(p_1)}", from=2-1, to=1-3]
	\arrow["{\textbf{I}(\textbf{Col}^r_\Delta\sigma)}"', from=2-1, to=3-3]
	\arrow["{\textbf{I}(p_1)}"', from=2-3, to=1-3]
	\arrow["{\textbf{I}(p_2)}", from=2-3, to=3-3]
	\arrow["\rho", dashed, from=2-1, to=2-3]
\end{tikzcd}\]
then $\top\le {\overline{\textbf{Prop}^r}}_{\rho}([\msf{S}])$.

    \item  $\sigma$ preserves the composition, i.e. for the unique arrow $\tau$ \footnote{Here we use the notation $\msf{A}^3:=\msf{A}\times \msf{A}\times \msf{A}$}
    \[\begin{tikzcd}
        {\Sigma(\msf{A}^3, \textbf{Col}^r_{ p_1}(\msf{B}) \times \textbf{Col}^r_{\langle p_1,p_2 \rangle}(\RR) \times \textbf{Col}^r_{\langle p_2,p_3 \rangle}(\RR) \times \textbf{Col}^r_{\langle p_1,p_3 \rangle}(\RR))} \arrow[r,"{\tau}"]& \Sigma(\msf{A}^3,\textbf{Col}^r_{ p_3}(\msf{B}\times\BB)) 
    \end{tikzcd}\]
    such that
    \begin{itemize}
    \item$\II(p_2)\circ \tau= \II(\Coll_{\langle p_1,p_3\rangle}(\sigma)\circ \langle p_1,p_4\rangle)$  \item $\II(p_1)\circ\tau=\II(p_1)\circ\II(\Coll_{\langle p_2,p_3\rangle}(\sigma)\circ(\Coll_{\langle p_1,p_2\rangle}(\sigma)\times id)\times id)$
    \end{itemize}
     it holds that $\top\le {\overline{\textbf{Prop}^r}}_{\Sigma(p_3, \msf{B}\times \msf{B})\circ\tau}([\msf{S}])$.
    \end{enumerate}
   
\end{dfn}

\begin{dfn}\label{dfn: morphism actions 2}\normalfont
	Let $(\msf{A},[\msf{R}])$ be an object in $\peff$. A {morphism between extensional dependent collections} over  $(\mathsf{A},[\mathsf{R}])$ w.r.t. ${\RR}$ from $(\msf{B},[\msf{S}],\sigma)$ to $(\msf{C},[\msf{H}],\nu)$ 

% \textcolor{red}{\bf milly: invece di D mettere H e riservare D per collection? }
 
%\textcolor{red}{ e al posto di $\psi$ mettere $\nu$???}

%\textcolor{red}{\bf  e poi spiegherei la condizione 2. sotto informalmente  "f is compatible with the transport"}
 
%\textcolor{red}{\bf  $H(\psi_{a,a'}(f(b)), f(\sigma_{a,a'}(b)))$ }

 is an equivalence class ${[\msf{f}]}_{\cong} $ of arrows $\msf{f}:\msf{B}\to \msf{C}$ in $\textbf{Col}^r(\msf{A})$ such that 
	\begin{enumerate}
		\item\label{item: morph action 1}
  $\msf{f}$ respects the equivalence relations:
  $$[\msf{S}]\le\overline{\textbf{Prop}^r}_{\textbf{I}(\msf{f}\times \msf{f})}([\msf{H}])$$
		\item\label{item: morph action 2} 
  $\msf{f}$ commutes with the transport maps
 $\msf{H}(\nu_{a,a'}(r,\msf{f}(b)), \msf{f}(\sigma_{a,a'}(r,b)))$:
  
  $$\top\le \overline{\textbf{Prop}^r}_{\Sigma(p_2,\msf{C}\times\msf{C})\circ\mathsf{squ}}([\msf{H}])$$ where 
  $\mathsf{squ}:= \textbf{I}(\langle \nu\circ(\textbf{Col}^r_{p_1}(\msf{f}) \times 1_{\RR}) ,  \textbf{Col}^r_{p_2}(\msf{f})\circ\sigma\rangle)$; %$\top\le \overline{\textbf{Prop}^r}_{\Sigma(p_2,\msf{C}\times\msf{C})\circ\textbf{I}(\langle \nu\circ(\textbf{Col}^r_{p_1}(f) \times 1_{\RR}) ,  \textbf{Col}^r_{p_2}(f)\circ\sigma\rangle)}([\msf{H}])$ 
  
	\end{enumerate}
	and $\msf{f}\sim \msf{g}$ if and only if $[\msf{S}]\le\overline{\textbf{Prop}^r}_{\textbf{I}(\msf{f}\times \msf{g})}([\msf{H}])$. 
 
\end{dfn}

 \iffalse 
 Intuitively, condition 1 states that $\msf{f}$ respects the equivalence relations. Condition 2 states that $\msf{f}$ commutes with the transport maps
 $\msf{H}(\nu_{a,a'}(\msf{f}(b)), \msf{f}(\sigma_{a,a'}(b)))$.

\textcolor{red}{\bf faccio fatica a leggere subscript sopra  e quindi metterei $$\top\le \overline{\textbf{Prop}^r}_{\Sigma(p_2,\msf{C}\times\msf{C})\circ\mathsf{squ}}([\msf{H}])$$ }

 \textcolor{red}{where $\mathsf{squ}= \textbf{I}(\langle \nu\circ(\textbf{Col}^r_{p_1}(f) \times 1_{\RR}) ,  \textbf{Col}^r_{p_2}(f)\circ\sigma\rangle)$  per esempio}

 {\color{blue} Cipriano:Cosa intendi di preciso? intendi mettere il pedice ad $\II$ per il dominio? e per il codomio?}
 \fi
 
\begin{rmk}
The above notion was inspired by that of extensional dependent collection in the context of dependent type theory in
\cite{m09}.
\end{rmk}

\begin{dfn}
    Let $(\mathsf{\msf{A}},[\mathsf{R}])$ be an object of $\textbf{pEff}$.  An \emph{extensional dependent set} on $(\mathsf{\msf{A}} , [\mathsf{R}])$ w.r.t ${\RR}$  is an extensional dependent collection $(\msf{B},[\msf{S}],\sigma)$  on $(\mathsf{\msf{A}} , [\mathsf{R}])$ w.r.t ${\RR}$ such that $\msf{B}\in\textbf{Set}^r(\msf{A})$ and $[\msf{S}]\in {\overline{\textbf{Prop}_s^r}}(\Sigma(\msf{A},\msf{B}\times \msf{B}))$.
\end{dfn}

%\textcolor{red}{\bf mettere def. di extensional dependent set subito qui e citarla nelle prime righe sez. 6perche' e' persa nella def. della categoria sotto}

\begin{dfn}\normalfont
    \label{colset}

Given $(\msf{A},[\msf{R}])\in\peff$, we denote with $\Dep(\msf{A},{\msf{R}})$ the category of extensional dependent collections  on $(\mathsf{\msf{A}} , [\mathsf{R}])$ w.r.t ${\RR}$ and arrows between them.
Furthermore, we denote with  $\Depset(\msf{A},{\msf{R}})$ its full subcategory on \emph{extensional dependent sets}.%, given by the triples $(\BB,[\Ss],\sigma)$ such that $\BB\in\Set(\Aa)$ and $[\msf{S}]\in {\overline{\textbf{Prop}_s^r}}(\Sigma(\msf{A},\msf{B}\times \msf{B}))$.
\end{dfn}

\begin{lemma}\label{lemma: indipendence representative}
     Let $(\msf{A},[\msf{R}])$ be an object in $\peff$ and let $\RR,\RR'$ be two representatives of $[\RR]$. The categories $\Dep(\msf{A},{\msf{R}})$ and $\Dep(\msf{A},{\msf{R}}')$ are equivalent. The same holds for extensional dependent sets.
     \begin{proof}
         The fact that $\RR$ and $\RR'$ are in $[\RR]\in \overline{\Prop}(\Aa\times \Aa)$ imply that there exist two arrows $h:\RR\to \RR'$ and $k:\RR'\to\RR$ in $\Coll(\Aa\times \Aa)$. We can construct the functor
         \[-\circ k:\Dep(\msf{A}, {\msf{R}})\to \Dep(\msf{A}, {\msf{R}}')\]
         that sends an extensional dependent collection $(\BB,[\Ss],\sigma)$ to the triple $(\BB,[\Ss],\sigma\circ (id \times k))$ and every arrow into itself. It is straightforward to prove this functor is an equivalence considering the similar functor  
         \[-\circ h:\Dep(\msf{A},  {\msf{R}}')\to \Dep(\msf{A}, {\msf{R}}).\]
         
     \end{proof}
\end{lemma}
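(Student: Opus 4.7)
The plan is to mirror the author's sketch by using the fact that $\RR$ and $\RR'$ both lie in $[\RR] \in \overline{\Prop}(\Aa \times \Aa)$, which by the very construction of $\overline{\Prop}$ as the poset reflection of $\Coll$ yields arrows $h:\RR \to \RR'$ and $k:\RR' \to \RR$ in $\Coll(\Aa \times \Aa)$. These two arrows will serve to reindex actions between the two choices of representative, without changing the underlying family or equivalence relation.

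First I would define a functor $F_k:\Dep(\Aa, \RR) \to \Dep(\Aa, \RR')$ sending an object $(\BB,[\Ss],\sigma)$ to $(\BB,[\Ss], \sigma \circ (\mathrm{id}_\BB \times k))$ and acting as the identity on arrows. The central verification is that this new triple is an extensional dependent collection with respect to $\RR'$. The underlying family $\BB$ and equivalence relation $[\Ss]$ are unchanged, so the conditions on them are inherited. For the new action $\sigma' := \sigma \circ (\mathrm{id} \times k)$, each of the three defining conditions of Definition~\ref{dfn: actions 2} (preservation of $[\Ss]$, of identities, of composition) reduces to the corresponding condition for $\sigma$ by precomposing with the appropriate power of $k$: witnesses of reflexivity and transitivity for $\RR'$ get transported to witnesses for $\RR$ via $k$, and the inequalities to be established in $\overline{\Prop}$ factor through those already available for $\sigma$. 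For morphisms, since $F_k$ is the identity on underlying arrows, the two conditions of Definition~\ref{dfn: morphism actions 2} transfer immediately.

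Analogously one defines $F_h:\Dep(\Aa, \RR') \to \Dep(\Aa, \RR)$ using $h$ in place of $k$. The composite $F_h \circ F_k$ sends $(\BB,[\Ss], \sigma)$ to $(\BB,[\Ss], \sigma \circ (\mathrm{id} \times (k \circ h)))$. Here the observation following the conditions of Definition~\ref{dfn: actions 2} --- stating that $\sigma$ does not depend on the particular realizer of $\RR(a,a')$ used --- guarantees that the identity arrow $\mathrm{id}_\BB$ of $\Coll(\Aa)$ is a morphism in $\Dep(\Aa, \RR)$ from $F_h F_k(\BB,[\Ss],\sigma)$ to $(\BB,[\Ss],\sigma)$, and is in fact an isomorphism, whose inverse is again the identity. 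These identity arrows assemble into natural isomorphisms $F_h \circ F_k \cong \mathrm{id}_{\Dep(\Aa, \RR)}$ and, by the symmetric argument, $F_k \circ F_h \cong \mathrm{id}_{\Dep(\Aa, \RR')}$, establishing the equivalence.

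Finally, for extensional dependent sets, the functors $F_k$ and $F_h$ modify only the action component, leaving $\BB \in \Set(\Aa)$ and $[\Ss] \in \overline{\Props}(\Sigma(\Aa,\BB\times\BB))$ in place; they therefore restrict to functors between $\Depset(\Aa, \RR)$ and $\Depset(\Aa, \RR')$ giving the corresponding equivalence. The one genuinely delicate step, which I would spell out in detail, is the verification that condition~4 of Definition~\ref{dfn: actions 2} produces the desired isomorphism on the round trip --- this is the precise place where the extensionality built into $\Dep$ does its work, while everything else is essentially formal bookkeeping with the conditions of Definitions~\ref{dfn: actions 2} and \ref{dfn: morphism actions 2}.
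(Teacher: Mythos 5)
Your proposal is correct and follows essentially the same route as the paper: both construct the reindexing functors $-\circ k$ and $-\circ h$ that alter only the action component, and both rely (the paper implicitly, you explicitly) on the independence of $\sigma$ from the chosen realizer to see that the round trip is naturally isomorphic to the identity. Your write-up merely fills in the verifications that the paper declares straightforward.
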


In the following proposition, we show directly that extensional dependent collections induce objects in the slice categories of $\peff$.

\begin{prop}\label{prop: K equiv}
     Let $(\msf{A},[\msf{R}])$ be an object in $\peff$. There exists a full and faithful essentially surjective functor 
    $$\mathbf{K}:\Dep(\msf{A}, {\msf{R}})\to \peff/(\Aa,{[\msf{R}]}).$$   
\end{prop}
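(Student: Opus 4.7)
The plan is to construct $\mathbf{K}$ explicitly and verify the three properties in turn. On objects I set
$$\mathbf{K}(\BB,[\Ss],\sigma):=[\msf{p}_1^{\Sigma}]:(\Sigma(\Aa,\BB),[\msf{T}])\to(\Aa,[\RR]),$$
where $[\msf{T}]$ is the weak subobject of $\Sigma(\Aa,\BB)\times\Sigma(\Aa,\BB)$ whose membership is, informally,
$$\exists r.\,\bigl(r\Vdash\RR(p_1(x),p_1(x'))\bigr)\wedge\Ss\bigl(p_1(x'),\sigma_{p_1(x),p_1(x')}(r,p_2(x)),p_2(x')\bigr),$$
formally built from $\exists_{\pi}$, $\wedge$, $[\RR]$ and $[\Ss]$ via suitable reindexings along $\II(-)$ and $\Sigma(-,-)$. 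That $[\msf{T}]$ is a $\peff$-equivalence relation follows from reflexivity of $\RR$ together with axiom~(2) of Definition~\ref{dfn: actions 2} (reflexivity); from transitivity of $\RR$ together with axiom~(3) (transitivity); and, for the more delicate symmetry case, by transporting back along a realizer of $\RR(a',a)$ via $\sigma_{a',a}$ and then using axioms~(2) and~(3) to collapse the composite $\sigma$ to the identity up to $[\Ss]$. On morphisms I send $[\msf{f}]_{\cong}$ to $[\II_{\Aa}(\msf{f})]$; clauses~(1) and~(2) of Definition~\ref{dfn: morphism actions 2} make $\II_{\Aa}(\msf{f})$ a $\peff$-arrow over $(\Aa,[\RR])$, and the relation $\msf{f}\sim\msf{g}$ is exactly the quotienting on $\peff$-hom-sets.

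Faithfulness is immediate from that last coincidence: if $[\II_{\Aa}(\msf{f})]=[\II_{\Aa}(\msf{g})]$, unfolding the $[\msf{T}']$-relation and instantiating its existential by a reflexivity realizer of $\RR$ collapses, via axiom~(2) for $\nu$, to $[\Ss]\le\overline{\Prop}_{\II(\msf{f}\times\msf{g})}[\msf{H}]$, which is exactly $\msf{f}\sim\msf{g}$. For fullness, let $[\phi]:(\Sigma(\Aa,\BB),[\msf{T}])\to(\Sigma(\Aa,\msf{C}),[\msf{T}'])$ be a $\peff$-arrow over $(\Aa,[\RR])$. Its commutation over the base produces a uniform realizer $e$ with $e(a,b)\Vdash\RR(p_1\phi(a,b),a)$, so I define
$$\msf{f}(a,b):=\nu_{p_1\phi(a,b),a}\bigl(e(a,b),p_2\phi(a,b)\bigr)\in\msf{C}(a);$$
axioms~(1)--(3) for $\nu$ force $\msf{f}$ to satisfy the two conditions of Definition~\ref{dfn: morphism actions 2}, and taking $e(a,b)$ as the existential witness of $[\msf{T}']$ shows that $\mathbf{K}([\msf{f}])=[\phi]$.

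For essential surjectivity, given $[\msf{g}]:(\BB',[\Ss'])\to(\Aa,[\RR])$ I take
$$\BB:=\{x'\mid p_1(x')\,\varepsilon\,\BB'\,\wedge\,p_2(x')\Vdash\RR(\msf{g}(p_1(x')),x)\}\in\Coll(\Aa),$$
with $[\Ss]$ represented by $\Ss'(p_1(x'),p_1(x''))$ and $\sigma_{a,a'}(r,(b',s)):=(b',\msf{tr}(s,r))$ for $\msf{tr}$ a transitivity realizer of $\RR$. Axioms~(1)--(3) of Definition~\ref{dfn: actions 2} follow from $\RR$ being an equivalence relation and from $\msf{g}$ respecting $[\Ss']$ and $[\RR]$. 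The projection $(a,b',s)\mapsto b'$ on $\Sigma(\Aa,\BB)$ descends to a $\peff$-iso $(\Sigma(\Aa,\BB),[\msf{T}])\cong(\BB',[\Ss'])$ over $(\Aa,[\RR])$; a quasi-inverse is $b'\mapsto(\msf{g}(b'),b',\msf{refl}_{\RR}(\msf{g}(b')))$, and compatibility with the projections holds because $s\Vdash\RR(\msf{g}(b'),a)$.

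The main bookkeeping obstacle is the symmetry/transitivity of $[\msf{T}]$ and the verification that the datum produced in essential surjectivity really satisfies the three action axioms: both require carefully combining an equivalence-relation witness on $\RR$, axioms~(2)--(3) for $\sigma$ (resp.\ the built-in transitivity of $\RR$), and the poset collapse in $\overline{\Prop}$. The remaining steps reduce to routine transport of realizers inside $\Coll$ and $\overline{\Prop}$, using the pullback square of $\Sigma(-,-)$ recalled just before the definition of $\Dep(\Aa,\RR)$.
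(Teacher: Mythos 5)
Your construction coincides with the paper's: the same object assignment (your relation $[\msf{T}]$ is exactly the paper's $\exists_{d_{\RR}}(\overline{\Prop}_{t_\sigma}([\Ss]))$, i.e.\ a pair $(r,s)$ with $r\Vdash\RR(a,a')$ and $s\Vdash\Ss_{a'}(\sigma_{a,a'}(r,b),b')$), the same morphism assignment via $\II_{\Aa}(-)$, and the same essential-surjectivity witness (the $\RR$-graph of the given arrow, with $[\Ss]$ induced by the source relation and $\sigma$ by a transitivity realizer of $\RR$). The only divergence is in fullness, where you explicitly transport a representative along a realizer of the base commutation via $\nu$ rather than passing, as the paper does, to a strictly commuting representative of the form $\II(\widetilde{h})$; this is a harmless, if anything slightly more careful, variant of the same step.
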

\begin{proof}
For every object $(\msf{B},\msf{S},\sigma)$ we define
\begin{equation}\label{defK}\mathbf{K}(\msf{B},[\msf{S}],\sigma):= [\msf{p}_1^\Sigma]_{\cong} : (\Sigma(\msf{A},\msf{B}),\exists_{d_{\bmR}}(\overline{\mathbf{Prop}^r}_{t_\sigma}([\mathsf{S}]))\to (\msf{A},[\msf{R}])
\end{equation}
for the commutative diagram
\[\begin{tikzcd}
	{\Sigma(\msf{A} \times \msf{A}, \RR \times \mathbf{Col}^r_{p_1}(\msf{B})\times \mathbf{Col}^r_{p_2}(\msf{B})) } && {\Sigma(\msf{A} \times \msf{A}, \RR \times \mathbf{Col}^r_{p_1}(\msf{B})\times \mathbf{Col}^r_{p_2}(\msf{B})) } \\
	{\Sigma(\msf{A} \times \msf{A}, \mathbf{Col }^r_{p_2}(\msf{B}) \times \mathbf{Col }^r_{p_2}(\msf{B})) } && {\Sigma(\msf{A} \times \msf{A}, \mathbf{Col }^r_{p_1}(\msf{B}) \times \mathbf{Col }^r_{p_2}(\msf{B})) } \\
	{\Sigma(\msf{A}, \msf{B}\times \msf{B})} && {\Sigma(\msf{A}, \msf{B}) \times \Sigma(\msf{A}, \msf{B})}
	\arrow["{\mathbf{I}(\langle \sigma\circ\langle p_1, p_2\rangle,p_3 \rangle)}"{description}, from=1-1, to=2-1]
	\arrow["{\Sigma(p_2, \msf{B}\times \msf{B})}"{description}, from=2-1, to=3-1]
	\arrow["{k}"{description}, from=2-3, to=3-3]
	\arrow[shift right= 18, "{t_\sigma}"', bend right=60pt, dashed, from=1-1, to=3-1]
	\arrow["{\mathbf{I}(\langle p_2, p_3\rangle)}"{description}, from=1-3, to=2-3]
	\arrow[shift left= 18,dashed,"{d_{\bmR}}", bend left=60pt, from=1-3, to=3-3]
\end{tikzcd}\]

Where $k$ is the isomorphism given by $\langle \Sigma(p_1, \msf{B})\circ \mathbf{I}(p_1),\Sigma(p_2, \msf{B})\circ \mathbf{I}(p_2)\rangle $.

%\textcolor{red}{\bf milly qui sopra i times dovrebbero essere composizioni }

Informally, the domain of the arrow above has as elements those pairs $(a,b)$ with $a\in \msf{A}$ and $b\in \mathsf{B}(a)$. A realizer for an equivalence of two such pairs $(a,b)$ and $(a',b')$ is a pair $(r,s)$ such that $r\Vdash \msf{R}(a,a')$ and $s\Vdash \msf{S}_{a'}(\sigma_{a,a'}(r,b),b')$.

For $[\msf{f}]:(\msf{B},[\msf{S}],\sigma)\rightarrow (\msf{C},[\msf{H}],\eta)$ we define $\mathbf{K}([\msf{f}])$ as $[\mathbf{I}(\msf{f})]$. 
These assignments will define a functor.

Since every arrow $h$ in $\CC_r$ from $\Sigma(\msf{A},\mathsf{B})$ to $\Sigma(\msf{A},\mathsf{C})$ such that $\msf{p}_1^{\Sigma}\circ h=\msf{p}_1^{\Sigma}$ 
has the form $\mathbf{I}(\widetilde{h})$ for some $\widetilde{h}:\msf{B}\rightarrow \msf{C}$ in $\mathbf{Col}^r(\msf{A})$, one obtains that every arrow $h$ in $\mathbf{pEff}/(\mathsf{A},[\msf{R}])$ between objects in the image of $\mathbf{K}$ has the form $[\mathbf{I}(\widetilde{h})]$. The fullness of $\mathbf{K}$ follows observing that such an arrow $\widetilde{h}$ determines in fact an arrow in $\Dep(\msf{A},\RR)$. This can be easily obtained using the properties of the original arrow $h$.

Faithfulness is immediate once one notice that equivalence of arrows $h$'s in $\mathbf{pEff}/(\mathsf{A},[\mathsf{R}])$ in the image of $\mathbf{K}$ corresponds exactly to equivalence of the corresponding $\widetilde{h}$'s in $\Dep(\msf{A},\RR)$.

 Now given an object $[\msf{f}]_{\cong}:(\msf{A}', [\msf{R}']) \to (\msf{A},[\msf{R}])$ in $\mathbf{pEff}/(\mathsf{A},\mathsf{R})$, we  build a triple $(\msf{B}^\msf{f}, [\msf{S}]^\msf{f}, \sigma^\msf{f})$ 
 %\textcolor{red}{\bf mettere $(\msf{B}^f, \msf{S}^f, \sigma^f)$  oerche' non si usa poi nelle def}
 in $\Dep(\msf{A},\bmR)$. In order to do that, we consider %the functor $\mathbf{J}_{\msf{A}}: \CC_r/\msf{A}\to \mathbf{Col}^r(\msf{A})$ defined in \cite[Thm. 4.12]{maietti_maschio_2021}. In particular, if $p_1:\msf{A}\times \msf{A}'\to \msf{A}$ then $\JJ_{\msf{A}}(p_1)$ is an account of $\msf{A}'$ seen as a constant element in the fibre $\mathbf{Col}^r(\msf{A})$.
the arrow $p_1:\msf{A}\times \msf{A}'\to \msf{A}$ and  define $\msf{B}^\msf{f}:=\Sigma_{p_1}(\mathbf{Col}^r_{ (\msf{id}_{\msf{A}}\times \msf{f})}(\RR))$.\ %, where $\epsilon_{p_1}:\II_{\msf{A}}\JJ_{\msf{A}}(p_1)\to p_1$ is the counit of the equivalence given by $\JJ_\msf{A}$ and $\II_{\msf{A}}$ (which is an arrow in $\CC_r$ from $\Sigma(\Aa,\JJ_{\Aa}(p_1))$ into $\msf{A}\times \msf{A}'$).
\iffalse
\textcolor{red}{
 \bf milly: qui sopra a parte correggere codominio counit - che indicherei come $\epsilon_{p_1}$ poi substitution va lungo
 $(\msf{id}_{\msf{A}}\times f) \circ c$ e
 comunque NON capisco qui sopra dovrebbe essere semplicemente
 $\msf{B}:=\Sigma_{p_1}(\mathbf{Col}^r_{ (\msf{id}_{\msf{A}}\times \msf{f})}(\bmR))$}

\textcolor{green}{Sono d'accordo... c'eravamo focalizzati sulla costruzione del $\Sigma^A$. I due oggetti sono isomorfi, ma il tuo è migliore. Cipriano ha detto che sistema lui.}
\fi
Intuitively, $\msf{B}^\msf{f}(a)$ is given by pairs $(a', k)$ where $a'\in \msf{A}'$ and $k \Vdash \msf{R}(a, \msf{f}(a')) $.  
 Then we define %msf{S}]^\msf{f}:=\overline{\mathbf{Prop}^r}_{\II(\msf{p}_1^{\msf{A},\Sigma}\times \msf{p}_1^{\msf{A},\Sigma})}\overline{\mathbf{Prop}^r}_{\langle p_2\circ \epsilon_{p_1}\circ \II({p_1}),p_2 \circ \epsilon_{p_1} \circ\II({p_2})\rangle }([\msf{R}'])$

$$[\msf{S}]^\msf{f}:=\overline{\mathbf{Prop}^r}_{(\mathsf{snd}^\msf{f}\times\mathsf{snd}^\msf{f})\circ\langle\II_\Aa(p_1),\II_\Aa(p_2)\rangle}([\msf{R}'])$$
where $$\mathsf{snd}^\msf{f}:= [\Lambda x.\{\mathbf{p}_2\}(\{\mathbf{p}_1\}(\{\mathbf{p}_2\}(x)))]_\approx:\Sigma(\Aa,\Sigma_{p_1}(\mathbf{Col}^r_{ (\msf{id}_{\msf{A}}\times \msf{f})}(\RR)))\to \Aa'$$ and $\II_\Aa(p_i):\Sigma(\Aa,\Sigma_{p_1}(\mathbf{Col}^r_{ (\msf{id}_{\msf{A}}\times \msf{f})}(\RR)\times \mathbf{Col}^r_{ (\msf{id}_{\msf{A}}\times \msf{f})}(\RR)))\to\Sigma(\Aa,\Sigma_{p_1}(\mathbf{Col}^r_{ (\msf{id}_{\msf{A}}\times \msf{f})}(\RR)))$, for $i=1,2$,
%%%%%%%%%%%%%%
\iffalse
 \textcolor{red}{\bf milly di nuovo composizioni rovesciate e faccio fatica a controllare def e propongo semplicemente}
{\color{blue}Cipriano
mi sembrano corrette le composizioni 
\[% https://q.uiver.app/#q=WzAsNCxbMCwwLCJcXFNpZ21hKFxcQWEsXFxKSl9cXEFhKHBfMSkgXFx0aW1lcyBcXEpKX1xcQWEocF8xKSkiXSxbMiwwLCJcXG1zZntBfVxcdGltZXMgXFxtc2Z7QX0nIl0sWzMsMCwiXFxtc2Z7QX0nIl0sWzEsMCwiXFxTaWdtYShcXEFhLFxcSkpfXFxBYShwXzEpICkiXSxbMSwyLCJwXzIiXSxbMywxLCJcXGVwc2lsb24iXSxbMCwzLCJcXElJX1xcQWEocF8xKSJdXQ==
\begin{tikzcd}
	{\Sigma(\Aa,\JJ_\Aa(p_1) \times \JJ_\Aa(p_1))} & {\Sigma(\Aa,\JJ_\Aa(p_1) )} & {\msf{A}\times \msf{A}'} & {\msf{A}'}
	\arrow["{\II_\Aa(p_1)}", from=1-1, to=1-2]
	\arrow["\epsilon", from=1-2, to=1-3]
	\arrow["{p_2}", from=1-3, to=1-4]
\end{tikzcd}\]
% https://q.uiver.app/#q=WzAsMixbMSwwLCJcXFNpZ21hKFxcQWEsXFxKSl9cXEFhKHBfMSkgXFx0aW1lcyBcXEpKX1xcQWEocF8xKSkiXSxbMCwwLCJcXFNpZ21hKFxcQWEsXFxTaWdtYV5cXEFhKFxcSkpfXFxBYShwXzEpLFxcbWF0aGJme0NvbH1ecl97KFxcbXNme2lkfV97XFxtc2Z7QX19XFx0aW1lcyBcXG1zZntmfSlcXGNpcmMgXFxlcHNpbG9uX3twXzF9fShcXGJtUikpKSBcXHRpbWVzXFxTaWdtYV5cXEFhKFxcSkpfXFxBYShwXzEpLFxcbWF0aGJme0NvbH1ecl97KFxcbXNme2lkfV97XFxtc2Z7QX19XFx0aW1lcyBcXG1zZntmfSlcXGNpcmMgXFxlcHNpbG9uX3twXzF9fShcXGJtUikpKSkiXSxbMSwwXV0=
\begin{tikzcd}
	{\Sigma(\Aa,\Sigma^\Aa(\JJ_\Aa(p_1),\mathbf{Col}^r_{(\msf{id}_{\msf{A}}\times \msf{f})\circ \epsilon_{p_1}}(\bmR))) \times\Sigma^\Aa(\JJ_\Aa(p_1),\mathbf{Col}^r_{(\msf{id}_{\msf{A}}\times \msf{f})\circ \epsilon_{p_1}}(\bmR))))} & {\Sigma(\Aa,\JJ_\Aa(p_1) \times \JJ_\Aa(p_1))}
	\arrow["\II(\msf{p}_1^{\msf{A},\Sigma}\times \msf{p}_1^{\msf{A},\Sigma})",from=1-1, to=1-2]
\end{tikzcd}
}

{\color{blue} possiamo però scrivere
\[[\msf{S}]^\msf{f}:=\overline{\mathbf{Prop}^r}_{(\mathsf{snd}^\msf{f}\times \mathsf{snd}^\msf{f})(\II_\Aa(p_1)\times \II_\Aa(p_2))\II(\msf{p}_1^{\msf{A},\Sigma}\times \msf{p}_1^{\msf{A},\Sigma})}([\msf{R}'])\]
where
$\mathsf{snd}^\msf{f}:= p_2\circ \epsilon$\\

ma non so quanto questo possa aiutare.
}
\fi
%%%%%%%%%%%%%% 
 to be the relation $\msf{R}'$ on the $\msf{A}'$ components of $\msf{B}^\msf{f}$.
 \iffalse
{\color{blue} Cipriano: per scrivere la mappa sopra non ho trovato altro modo che la scrittura diretta di snd. Voi vedete un modo più categoriale per definire snd?}
\textcolor{red}{
 $$[\msf{S}]:= Col_{<\mathsf{snd}^f,\mathsf{snd}^f>}  ([\msf{R}'])$$
 where $\mathsf{snd}^f:= p_2 \cdot \JJ_{A\times A'} (\ \mathbf{Col}^r_{ (\msf{id}_{\msf{A}}\times \msf{f})}(\bmR))\ ):??\rightarrow A'$
}
\fi

 Finally, we can define $\sigma^\msf{f}$ as the arrow which sends a triple $((a', k),r)$ with $(a', k)\in \msf{B}^\msf{f}(a_1)$ and $r \Vdash \msf{R}(a_1, a_2)$ to the pair $(a', \msf{tr}(\msf{sym}(r), k))\in\msf{B}^\msf{f}(a_2)$ where $\msf{tr}(\msf{sym}(r), k)\Vdash\msf{R}(a_2, \msf{f}(a'))$ and $\msf{refl, sym}$ and $\msf{tr}$ are some chosen witnesses of the reflexivity, symmetry and transitivity of $\RR$. We leave this to the reader.

 Now we can observe that $\mathbf{K}(\msf{B}^\msf{f}, [\msf{S}]^\msf{f}, \sigma^\msf{f})\cong [\msf{f}]_{\cong}$. Indeed, there is an obvious arrow from $\Sigma(\msf{A},\msf{B}^\msf{f})$ to $\msf{A}'$ which takes the $\msf{A}'$ component of $\msf{B}$ to itself. Vice versa, given an element $a'\in\msf{A}'$, we can consider the triple $(\msf{f}(a'), (a', \msf{refl}))\in\Sigma(\msf{A},\msf{B})$. These arrows are inverse thanks to the relations $[\msf{R}']$ and $\exists_{d_{\bmR}}(\overline{\mathbf{Prop}^r}_{t_\sigma}([\mathsf{S}]^\msf{f}))$.
\end{proof}

\begin{rmk}\label{rmk: AC for the equivalence}\normalfont
 The above result provides an equivalence between the categories $\Dep(\msf{A},\RR)$ and $ \mathbf{pEff} /(\msf{A},[\msf{R}])$ in case we assume the axiom of countable choice in the meta-theory.
 
% \textcolor{red}{specificare da non confondersi con $\it T$}
   Otherwise, we only obtain that $\mathbf{K}:\Dep(\mathsf{A},\RR)\rightarrow\mathbf{pEff}/(\mathsf{A},[\mathsf{R}])$ preserves and reflects all limits, colimits and exponentials. 
\end{rmk}

\begin{rmk}\label{rmk: equivalent categories same images}\normalfont
If $(\msf{A},[\msf{R}])$ is an object in $\peff$ and $\RR,\RR'$ are two representatives of $[\RR]$ as in Lemma \ref{lemma: indipendence representative}, then the image through $\mathbf{K}$ of the categories $\Dep(\msf{A}, {\msf{R}})$ and $\Dep(\msf{A}, {\msf{R}}')$ is the same, i.e.\ the following diagram commutes
    \[% https://q.uiver.app/#q=WzAsMyxbMCwwLCJcXERlcChcXG1zZntBfSxcXGJhcntcXG1zZntSfX0pIl0sWzIsMCwiXFxEZXAoXFxtc2Z7QX0sXFxiYXJ7XFxtc2Z7Un19JykiXSxbMSwxLCJcXHBlZmYvKFxcQWEsW1xcUlJdKSJdLFswLDIsIlMiLDJdLFsxLDIsIlMiXSxbMCwxLCItXFxjaXJjIGsiLDAseyJvZmZzZXQiOi0yfV0sWzEsMCwiLVxcY2lyYyBoIiwwLHsib2Zmc2V0IjotMn1dLFs1LDYsIlxcY29uZyIsMCx7InNob3J0ZW4iOnsic291cmNlIjoyMCwidGFyZ2V0IjoyMH0sInN0eWxlIjp7ImJvZHkiOnsibmFtZSI6Im5vbmUifSwiaGVhZCI6eyJuYW1lIjoibm9uZSJ9fX1dXQ==
\begin{tikzcd}
	{\Dep(\msf{A},{\msf{R}})} && {\Dep(\msf{A}, {\msf{R}}')} \\
	& {\peff/(\Aa,[\RR])}
	\arrow[""{name=0, anchor=center, inner sep=0}, "{-\circ k}", shift left=2, from=1-1, to=1-3]
	\arrow["\mathbf{K}"', from=1-1, to=2-2]
	\arrow[""{name=1, anchor=center, inner sep=0}, "{-\circ h}", shift left=2, from=1-3, to=1-1]
	\arrow["\mathbf{K}", from=1-3, to=2-2]
	\arrow["\cong", draw=none, from=0, to=1]
\end{tikzcd}\]
\end{rmk}

We conclude this section providing a direct correspondence between extensional dependent collections and internal \emph{groupoid action} (see \cite{maclane2012sheaves,facetI} for a precise definition) in $\peff$.
%already observed in \cite{m09} in the context of the Minimalist Foundation. 
Indeed, for every object $(\msf{A},[\msf{R}])\in\peff$, the kernel pair of a quotient arrow 

\[p:= [id_{\Aa}]:(\msf{A},[=_\msf{A}])\to(\msf{A},[\msf{R}])\] 
induces an internal groupoid $\mathsf{Ker}(p, {\msf{R}})$ %in $\peff$ for any representative $\bar{\msf{R}}\in[\msf{R}]$
obtained through the kernel pair of $p$

%\textcolor{red}{\bf milly metterei prima R e poi direi a voce per ogni altro come sotto=== la notazione R sta ad indicare la classe individuata da R e quindi non serve prendere altro rappresentante perche' comunuqe - le quantificazioni sono sempre per ogni nel meta linguaggio - altro e' dimostrare che i risultati sono "uguali" e allora bisogna prenderne due}

\[% https://q.uiver.app/#q=WzAsMyxbMiwwLCIoXFxTaWdtYShcXG1hdGhzZntBfVxcdGltZXNcXG1hdGhzZntBfSxcXGJhcntcXG1hdGhzZntSfX0pLFs9X3tcXG1hdGhzZntBfVxcdGltZXNcXG1hdGhzZntBfX1dKSJdLFs0LDAsIihcXG1hdGhzZntBfSxbPV9cXG1hdGhzZntBfV0pIl0sWzAsMCwiXFxjZG90cyJdLFswLDEsIltcXHBpXzFdIiwwLHsib2Zmc2V0IjotM31dLFswLDEsIltcXHBpXzJdIiwyLHsib2Zmc2V0IjozfV0sWzEsMF0sWzIsMCwiIiwxLHsib2Zmc2V0IjotMn1dLFsyLDAsIiIsMSx7Im9mZnNldCI6Mn1dLFsyLDBdXQ==
\begin{tikzcd}
	\cdots && {(\Sigma(\mathsf{A}\times\mathsf{A},{\mathsf{R}}),[=_{\mathsf{A}\times\mathsf{A}}])} && {(\mathsf{A},[=_\mathsf{A}])}
	\arrow[shift left=2, from=1-1, to=1-3]
	\arrow[shift right=2, from=1-1, to=1-3]
	\arrow[from=1-1, to=1-3]
	\arrow["{[\pi_1]}", shift left=2, from=1-3, to=1-5]
	\arrow["{[\pi_2]}"', shift right=2, from=1-3, to=1-5]
	\arrow[from=1-5, to=1-3]
\end{tikzcd}\]
and for any other representative ${\msf{R}}'\in[\msf{R}]$ the two internal groupoids are isomorphic in the category of internal groupoids of $\peff$.

Indeed, denoting with $\mathpzc{Des}(p, {\msf{R}})$ the category of the actions of $\mathsf{Ker}(p, {\msf{R}})$ in $\peff$, we obtain the following result.

\begin{prop}\label{prop: edp induce actions}
    Let $(\msf{A},[\msf{R}])$ be an object in $\peff$. There exists a full and faithful essentially surjective functor 
    $$A:\Dep(\msf{A}, {\msf{R}})\to \mathpzc{Des}(p, {\msf{R}}).$$
    \begin{proof}

By general descent theory, there exists a canonical comparison functor (see \cite[\S 2.2]{facetI})
\[\Phi^{p,{\msf{R}}}: \peff/(\msf{A},[\msf{R}])\to \mathpzc{Des}(p, {\msf{R}})\]
Since $p$ is a regular epimorphism and $\peff$ is exact, then we obtain that $p$ is an effective descent morphism, i.e.\ the functor $\Phi^{p, {\msf{R}}}$ is full, faithful and essentially surjective (see \cite[\S 2.4
]{facetI}). The functor $A$ is defined as the composition of $\mathbf{K}$ and $\Phi^{p,\RR}$.
    \end{proof}
\end{prop}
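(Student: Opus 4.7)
The plan is to define the functor $A$ as a two-step composition $A := \Phi^{p,\msf{R}} \circ \mathbf{K}$, where $\mathbf{K}$ is the functor supplied by Proposition~\ref{prop: K equiv} and $\Phi^{p,\msf{R}}$ is the canonical comparison functor produced by descent theory. Since both factors will be shown to be full, faithful, and essentially surjective, the composite inherits these properties.

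First, I would invoke Proposition~\ref{prop: K equiv} directly: it already gives a full and faithful essentially surjective functor $\mathbf{K}:\Dep(\msf{A},{\msf{R}})\to \peff/(\msf{A},[\msf{R}])$, so no further work is required on this side. For the second factor, I would construct $\Phi^{p,\msf{R}}:\peff/(\msf{A},[\msf{R}])\to \mathpzc{Des}(p,{\msf{R}})$ in the standard descent-theoretic fashion: given an arrow $[g]:(X,[\sim_X])\to (\msf{A},[\msf{R}])$, pull it back along the quotient map $p:(\msf{A},[=_\msf{A}])\to (\msf{A},[\msf{R}])$ and equip the resulting object of $\peff/(\msf{A},[=_\msf{A}])$ with the canonical action of the groupoid $\mathsf{Ker}(p,{\msf{R}})$ induced by the universal property of pullbacks.

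The substantive step is to verify that $\Phi^{p,\msf{R}}$ is itself an equivalence, i.e.\ that $p$ is an \emph{effective descent morphism}. Here I would appeal to the classical fact (see e.g.\ \cite{facetI}) that in an exact category every regular epimorphism is of effective descent; equivalently, by the B\'enabou--Roubaud theorem, that $p^{\ast}$ is monadic. Both hypotheses apply: by Theorem~\ref{thm: peff lcc pretops list} the category $\peff$ is a (locally cartesian closed list-arithmetic) pretopos, hence in particular exact, and $p=[id_\msf{A}]$ is by construction the quotient map onto $(\msf{A},[\msf{R}])$, so is a regular epi.

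I expect the main obstacle to lie not in the argument itself -- which is short and largely reduces to stitching together Proposition~\ref{prop: K equiv} with standard descent theory -- but in pinning down the representative-dependence. One must check that the kernel pair groupoid $\mathsf{Ker}(p,{\msf{R}})$ is well-defined for any chosen representative $\msf{R}\in[\msf{R}]$, and that different choices produce isomorphic internal groupoids with equivalent categories of actions; this is essentially already ensured by Lemma~\ref{lemma: indipendence representative} and Remark~\ref{rmk: equivalent categories same images}, so the composition $A=\Phi^{p,\msf{R}}\circ \mathbf{K}$ is canonical up to the same equivalence.
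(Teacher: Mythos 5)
Your proposal is correct and follows essentially the same route as the paper: both define $A$ as the composite $\Phi^{p,\msf{R}}\circ\mathbf{K}$, invoke Proposition~\ref{prop: K equiv} for $\mathbf{K}$, and deduce that $\Phi^{p,\msf{R}}$ is full, faithful and essentially surjective from the fact that $p$ is a regular epimorphism in the exact category $\peff$, hence an effective descent morphism. Your extra remarks on representative-independence are sound and consistent with how the paper handles that issue elsewhere.
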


%%%%%%%%%%%%%%%%%%%%%%%%%%%%%%%%%%%%%%%%%%%%%%%%%%%%%%%%%%%%%%%%%%%%%%%%%%%%%%%%%%%%%%%%%%%%%%%%%%%%%%%%%%%%%%%%%%%%%%%%%%%%%%%%%%%%%%%%%%%%%%%%%%%%%%%%%%%%%%%%%%%%%%%%%%%%%%%%%%%%%%%%%%%%%%%%%%%%%%%%%%%%%%%%       NEW SECTION

\section{The structure of fibred extensional sets  on $\peff$}
Now we are going to investigate the categorical structure  of  {\em extensional dependent sets}  in definition~\ref{colset}
over $\peff$  as a subfibration of the codomain fibration of $\peff$ defined as 
the Grothendieck construction  of the pseudo-functor
\[\peff_{set}:\peff\op\to\mathbf{Cat}\]

We are forced to deal with fibrations since  families of collections and sets  loose their functoriality under substitution when lifted over $\peff$ to become ``extensional".
At this point, given that 
 slice categories provide a natural account for the notion of \emph{families of collections} over an object of $\peff$ we will just work with subfibrations of the codomain fibration  over $\peff$.

For this purpose,
we exploit the functor $\mathbf{K}$ of Proposition \ref{prop: K equiv}. In more detail,
 the action of $\peff_{set}$ on an object $(\Aa,[\RR])$ is given by the full subcategory of $\peff/(\Aa,[\RR])$ whose objects are arrows isomorphic to arrows in $\mathbf{K}(\Depset(\msf{A}, {\msf{R}}))$ for (any) representative of $[\RR]$, see Remark \ref{rmk: equivalent categories same images}. If $[p]:(\Aa',[\RR'])\to(\ARp)$ is an arrow  in $\peff$, then the value of $\Set_{[p]}([f])$, for $[f]\in\peff_{set}((\Aa,[\RR]))$ is given by the equivalence class of the pullback of $[f]$ along $[p]$ that will be denoted by $[p]^*[f]$.
 %\textcolor{red}{\bf milly: ricordare notazione pullback $[p]^*[f]$}
 
 To prove that the above pseudo-functor is well-defined we need the following lemma.

\begin{lemma}\label{lemma: well defined pseudo fuctor sets}
    Let $[p]:(\Aa',[\RR'])\to(\msf{A},[\msf{R}]),$ be an arrow in $\peff$. If $[f]\in\peff/(\ARp)$ is isomorphic to the image trough $\mathbf{K}$ of an extensional dependent set $(\BB,[\Ss],\sigma)\in \Depset(\msf{A}, {\msf{R}}) $, then $[p]^*[f]$ is isomorphic to the image through $\mathbf{K}$ of an element in $\Depset(\msf{A}, {\msf{R}'}) $.
    \begin{proof}
We assume that $[f]$ is isomorphic to $\mathbf{K}((\BB,[\Ss],\sigma))$, then we can choose two representative $f:\Aa'\to\Aa$  in $\CC_r$ and $[g]:\RR'\to\RR$ in $\overline{\Props}(\Aa\times \Aa)$ (recall that  $\overline{\Props}(\Aa\times \Aa)$ is the poset reflection of
    $\Set(\Aa\times \Aa)$)
%\textcolor{red}{\bf milly:
%specificare le categorie dove stanno i morfismi
%ovvero Cr e $\overline{\Props} (A\times A)$  con
%(recall that  $\overline{\Props} (A\times A)$ is the posetal reflection of ???)}
for $[f]$ and consider the extensional dependent set 

$$(\Set_{p}(\msf{B}),(\overline{\Props})_{\Sigma(p,\msf{B}\times \msf{B})}([\msf{S}]),\mathbf{Set}^r_{p\times p}(\sigma)\circ (\mathsf{id}\times g))$$
in $\Depset(\msf{A}, {\msf{R}'}) $ and observe that its image through $\mathbf{K}$ is isomorphic to the pullback $[p]^*[f]$.
    \end{proof}
\end{lemma}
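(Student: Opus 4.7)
The plan is to build, from the given extensional dependent set $(\BB, [\Ss], \sigma) \in \Depset(\Aa, \RR)$, a corresponding triple over $(\Aa', \RR')$ whose image under $\mathbf{K}$ realises the pullback $[p]^*[f]$ up to isomorphism. Since $[f] \cong \mathbf{K}((\BB, [\Ss], \sigma))$ by hypothesis and pullbacks preserve isomorphisms, it suffices to identify such a preimage for the pullback of $\mathbf{K}((\BB, [\Ss], \sigma))$ along $[p]$.

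First I would fix representatives: let $p : \Aa' \to \Aa$ in $\CC_r$ represent $[p]$, and since being an arrow of $\peff$ means $[\RR'] \le (\overline{\Props})_{p \times p}([\RR])$, pick an arrow $g : \RR' \to \Coll_{p \times p}(\RR)$ in $\Set(\Aa' \times \Aa')$ witnessing this inequality. The candidate triple over $(\Aa', [\RR'])$ is then
$$\bigl(\Set_p(\BB),\ (\overline{\Props})_{\Sigma(p, \BB \times \BB)}([\Ss]),\ \Set_{p \times p}(\sigma) \circ (\mathsf{id} \times g)\bigr),$$
where the second component pulls back $[\Ss]$ along the canonical arrow $\Sigma(p, \BB \times \BB)$ from the ``Further notation'' subsection, and the third pulls back $\sigma$ along $p \times p$ and then precomposes with $g$ in the realizer coordinate to convert a realizer of $\RR'$ into one of $\RR$ along $p$.

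The verification then splits into two essentially routine parts. First, that this triple is an extensional dependent set: each of the three axioms of Definition \ref{dfn: actions 2} is the substitution along a suitable power of $p$ of the corresponding axiom for $(\BB, [\Ss], \sigma)$, modulo the precomposition with $g$; this follows from functoriality of $\Coll_{-}$, the pullback property of the squares $\Sigma(p, -)$, and the original axioms for $\sigma$. Second, that $\mathbf{K}$ applied to this triple is isomorphic to $[p]^* \mathbf{K}((\BB, [\Ss], \sigma))$: unfolding the definition of $\mathbf{K}$, the underlying object is $\Sigma(\Aa', \Set_p(\BB))$, which by the pullback-pasting lemma computes a pullback of $\Sigma(\Aa, \BB) \to \Aa$ along $p$, and a direct calculation shows that the equivalence relation built from the triple coincides with the pullback along $[p]$ of $\exists_{d_{\RR}}(\overline{\mathbf{Prop}^r}_{t_\sigma}([\Ss]))$.

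The main delicate point I expect is the compatibility with composition (axiom 3 of Definition \ref{dfn: actions 2}) for the new transport map: after pulling back $\sigma$ and reindexing realizers through $g$, a composite of two $\RR'$-transports must agree, modulo the pulled-back equivalence relation, with a single transport along a composite realizer. This will reduce to axiom 3 for the original $\sigma$ together with the observation immediately after Definition \ref{dfn: actions 2} that $\sigma$ does not depend on the particular realizer of $\RR$ chosen, which is what absorbs the discrepancy between $g$ applied to a composite of $\RR'$-realizers and a composite of $\RR$-realizers obtained from $g$.
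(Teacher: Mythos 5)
Your proposal constructs exactly the same triple $(\Set_p(\BB),\ (\overline{\Props})_{\Sigma(p,\BB\times\BB)}([\Ss]),\ \Set_{p\times p}(\sigma)\circ(\mathsf{id}\times g))$ as the paper's proof and argues the same two points (that it is an extensional dependent set and that its image under $\mathbf{K}$ computes the pullback), so this is essentially the paper's argument, with your version of $g$ landing in $\Coll_{p\times p}(\RR)$ being if anything the more carefully typed rendering of the paper's $[g]:\RR'\to\RR$. The extra detail you supply on verifying axiom 3 via realizer-independence of $\sigma$ is a correct elaboration of what the paper leaves implicit.
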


\begin{rmk}\label{rmk: set representable arrows in peff_set}
    Similar to Lemma \ref{rmk: I representable arrow}, any arrow $[\msf{f}]$ in $\peff_{set}(\Aa,[\RR])$ between two arrows isomorphic to the image through $\mathbf{K}$ of two objects in $(\msf{B},\Ss,\sigma)$ and  $(\msf{C},\msf{H},\eta)$ in
 $\Depset(\Aa,\RR)$
    \[[\msf{f}]:\mathrm{dom}(\mathbf{K}(\msf{B},\Ss,\sigma)\to \mathrm{dom}(\mathbf{K}(\msf{C},\msf{H},\eta))\]
    is itself in $\peff_{set}(\mathrm{dom}(\mathbf{K}(\msf{C},\msf{H},\eta)))$.
\end{rmk}

Denoting with $\peff_{set}^\to$ the Grothendieck construction of $\peff_{set}$ we obtain the following commutative diagram of fibrations

\[\begin{tikzcd}
		\\
		{{\mathbf{pEff}^\to_{set}}} && {{\mathbf{pEff}}^\to} \\
		\\
		& {\mathbf{pEff}}
		\arrow["{\mathsf{cod}_{set}}"', from=2-1, to=4-2]
		\arrow["{\mathsf{cod}}", from=2-3, to=4-2]
  \arrow[hook, from=2-1, to=2-3]
	\end{tikzcd}\]

We now study the categorical property of these fibrations. To do that, instead of working with slice categories over $(\Aa,[\RR])\in\peff$ we exploit the equivalent categories of extensional dependent collections and sets and prove their properties. The following results do not depend on the chosen representative of $[\RR]$ thanks to Lemma \ref{lemma: indipendence representative} and Remark \ref{rmk: equivalent categories same images}.

Before moving to the main theorem of the section we provide a characterization of monomorphisms for families of collections, which will be instrumental to prove part of the theorem. It exploits a characterization that holds for suitable doctrines in general. 
%\textcolor{red}{\bf milly: prima richiamare funtore
%su slice per bene su oggetti } 
{
Indeed, there exists a forgetful functor 
\[U:\Dep(\mathsf{A},\RR)\to \peff\]
which ``forgets" the $\sigma$-component in a faithful way. It is obtained composing the functor $\mathbf{K}$ with the domain  functor $\peff/(\Aa,[\RR])\to \peff$.

}

%\textcolor{red}{\bf milly: qui sopra basta funtore dimenticante da Dep verso peff}

%\textcolor{red}{\bf milly sistemare qui sopra}

%\textcolor{red}{\bf milly: mettere questo nella dimostrazione del lemma}

%\textcolor{red}{A QUI}
%\fi
%{\color{blue} Cipriano: come facciamo con l'interruzione della dimostrazione del teorema?
%%}
%\textcolor{red}{\bf mettiamo questo lemma prima dicendo che sara' instrumental prima del theorem 7.2}

\begin{lemma}\label{lemma: mono characterization}
Let $(\mathsf{A},[\mathsf{R}])$ be an object of \emph{$\textbf{pEff}$}. An arrow $[\msf{m}]:(\mathsf{B},[\msf{S}],\sigma)\rightarrow (\mathsf{C},[\msf{H}],\nu)$ in $\Dep(\mathsf{A},\bmR)$ is a monomorphism if and only if
$$[\msf{S}]=\overline{\mathbf{Prop}^r}_{\II(\msf{m}\times \msf{m})}([\msf{H}])$$

\begin{proof}
{ Since $U$ is faithful, it reflects monomorphisms. Hence, using \cite[Corollary 4.8]{qu12} for the elementary quotient completion of ${(\overline{\Prop})}$ we obtain that the arrow $$U([\msf{m}])=[\II(\msf{m})]:(\Sigma(\Aa,\msf{B}),\exists_{d_\RR}\overline{\Prop}_{t_\sigma}[\Ss])\to (\Sigma(\Aa,\msf{C}),\exists_{d_\RR}\overline{\Prop}_{t_\nu}[\msf{H}])$$
is a monomorphism if and only if 
\[\exists_{d_\RR}\overline{\Prop}_{t_\sigma}[\Ss]=\overline{\Prop}_{\II(\msf{m})\times\II(\msf{m})}(\exists_{d_\RR}\overline{\Prop}_{t_\nu}[\msf{H}])\]
which is equivalent to 
$$[\msf{S}]=\overline{\mathbf{Prop}^r}_{\II(\msf{m}\times \msf{m})}([\msf{H}])$$
by definition of the arrows $d_\RR$ and $t_\sigma$ in the proof of Proposition \ref{prop: K equiv}.

%\textcolor{red}{\bf milly - dare giustificazione qui con prop 6.8 che e' equivalenza di oggetti}
}

\end{proof}
\end{lemma}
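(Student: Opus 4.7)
The plan is to reduce the statement to a general characterization of monomorphisms in the elementary quotient completion, exploiting the fact that $\peff$ arises as such a completion of $\overline{\Prop}$. First I would build a faithful forgetful functor $U:\Dep(\mathsf{A},\RR)\to\peff$ by composing $\mathbf{K}$ from Proposition \ref{prop: K equiv} with the domain functor $\peff/(\Aa,[\RR])\to\peff$. Faithfulness follows because $U$ sends a morphism $[\msf{f}]$ to the class $[\II(\msf{f})]$, and $\II$ is injective on equivalence classes of arrows in $\Coll(\Aa)$ with fixed source and target. Since faithful functors reflect monomorphisms, it is enough to decide when $[\II(\msf{m})]$ is monic in $\peff$.

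Next I would invoke the general characterization of monomorphisms in the elementary quotient completion (Corollary~4.8 of \cite{qu12}) applied to the arrow
\[
[\II(\msf{m})]\colon \bigl(\Sigma(\Aa,\msf{B}),\exists_{d_\RR}\overline{\Prop}_{t_\sigma}[\msf{S}]\bigr)\longrightarrow \bigl(\Sigma(\Aa,\msf{C}),\exists_{d_\RR}\overline{\Prop}_{t_\nu}[\msf{H}]\bigr)
\]
produced by the definition of $\mathbf{K}$ in \eqref{defK}. This yields that $[\II(\msf{m})]$ is a monomorphism precisely when
\[
\exists_{d_\RR}\overline{\Prop}_{t_\sigma}[\msf{S}]\ =\ \overline{\Prop}_{\II(\msf{m})\times \II(\msf{m})}\bigl(\exists_{d_\RR}\overline{\Prop}_{t_\nu}[\msf{H}]\bigr).
\]

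The final step, and what I expect to be the main obstacle, is to translate this identity into the desired fiberwise equality $[\msf{S}] = \overline{\Prop}_{\II(\msf{m}\times \msf{m})}([\msf{H}])$. Unfolding the explicit description of $d_\RR$ and of the transport substitutions $t_\sigma$, $t_\nu$ from the proof of Proposition \ref{prop: K equiv}, and applying Beck--Chevalley for $\exists_{d_\RR}$ along the naturality square induced by $\II(\msf{m})\times \II(\msf{m})$, one can rewrite both sides as existentials along $\RR$ with the same base; instantiating along a reflexivity witness of $[\RR]$ and using the identity axiom (item \ref{item: pseudo action 3} of Definition \ref{dfn: actions 2}) for $\sigma$ and $\nu$ lets one strip the existential quantifier and obtain the desired equality. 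The delicate point will be to verify that the realizer-level substitutions interact cleanly with $\II(\msf{m}\times \msf{m})$, so that the resulting equality matches the claim on the nose; the converse implication, from the fiberwise equality back to the mono condition on $U([\msf{m}])$, is then a direct computation by substituting the equality into the expression for $\exists_{d_\RR}\overline{\Prop}_{t_\sigma}[\msf{S}]$.
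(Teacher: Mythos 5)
Your proposal follows essentially the same route as the paper's own proof: factor through the faithful forgetful functor $U=\mathrm{dom}\circ\mathbf{K}$, apply \cite[Corollary 4.8]{qu12} to characterize when $U([\msf{m}])$ is monic in the elementary quotient completion, and then unwind the definitions of $d_\RR$, $t_\sigma$, $t_\nu$ to reduce to the fiberwise equality. The extra detail you give on the last translation step (Beck--Chevalley and instantiation along a reflexivity witness) is exactly what the paper compresses into ``by definition of the arrows $d_\RR$ and $t_\sigma$'', so the two arguments coincide.
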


{ 
\begin{rmk}
    The same result can be obtained  in this alternative way. Consider to lift the doctrine
 $\Prop$ over  the slice of $\CC_r$ over $\Aa\in\CC_r$ as done in \cite{cioffo2023biased}: 
$$(\overline{\Prop})_{/\msf{A}}: (\CC_r/\Aa)\op\rightarrow\mathbf{Pos}$$
   
    the action on an object $\msf{f}:\msf{X}\to \Aa$ of $\CC_r$ is given by $\Prop(\msf{X})$, and the action on an arrow from $\msf{f}$ to $\msf{g}:\msf{Y}\to \Aa$ (which is an arrow $\msf{h}:\msf{X}\to\msf{Y}$ such that $\msf{g}\circ \msf{h}=\msf{f}$) is given by $\Prop_{\msf{f}}$. Then,  consider the
 elementary quotient completion of ${(\overline{\Prop})_{/\msf{A}}}$ and  observe that
there exists a forgetful functor 
\[U:\Dep(\mathsf{A},\RR)\to \mathcal{Q}_{(\overline{\Prop})_{/A}}\]
which ``forgets" the $\sigma$-component in a faithful way: it sends an object $(\msf{B},\Ss,\sigma)$ to the pair $(\msf{p}_1^\Sigma, \Ss)$ in $\mathcal{Q}_{(\overline{\Prop})_{/A}}$, where $\msf{p}_1^\Sigma:\Sigma(\Aa,\BB)\to \Aa$.
Since, as shown in \cite{cioffo2023biased}, the elementary quotient completion of ${(\overline{\Prop})_{/A}}$ inherits some properties from $\overline{\Prop}$, and in particular it has full strict comprehensions and comprehensive diagonals,  \cite[Corollary 4.8]{qu12} applied to it and faithfulness of $U$ imply the same monomorphism characterization of lemma~\ref{lemma: mono characterization}.

\end{rmk}

    }

\begin{thm}\label{thm: pretopos}
	Let $(\mathsf{A},[\mathsf{R}])$ be an object of \emph{$\textbf{pEff}$}. The categories \emph{$\Dep(\msf{A},\RR)$} and \emph{$\Depset(\msf{A},\RR)$} are locally cartesian closed list-arithmetic pretoposes.
    \begin{proof}
%\textcolor{red}{\bf milly mettere la dimostrazione mettendo  quello che c'e' nel remark come preambolo}
%\begin{rmk}\label{rmk:usare la slice}\normalfont
    Since, $\mathbf{pEff}$ is equivalent to an exact completion of a lex category (see \cite{maietti_maschio_2021}) and \break$\mathbf{K}:\Dep(\msf{A}, {\msf{R}})\rightarrow\mathbf{pEff}/(\mathsf{A},[\mathsf{R}])$ preserves and reflects all limits, colimits and exponentials, in the case of families of collection Theorem \ref{thm: pretopos} follows from the properties of the exact completion. Indeed, exactness and local cartesian closure are preserved under slicing, such as extensivity (see \cite{carboniextensive}) and parametrized list objects (see \cite{maiettimodular}).    
    However, when restricting to $\mathbf{pEff}_{set}(\msf{A},[\msf{R}])$ we must check that the above properties restrict to a proper subcategory of $\mathbf{pEff}/(\mathsf{A},[\mathsf{R}])$. Hence, we provide directly a proof of the result for \emph{$\Dep(\msf{A},\RR)$} to be able to restrict to \emph{$\Depset(\msf{A},\RR)$} and hence to $\mathbf{pEff}_{set}(\msf{A},[\msf{R}])$.\\
%\end{rmk}

%%%%%%%%%%%%%FINITE LIMITS%%%%%%%%%%%%%%%%%%%

{\emph{Finite limits.}}
        %\begin{prop}\label{prop: finite limits}
%Let $(\mathsf{A},[\mathsf{R}])$ be an object of \emph{$\textbf{pEff}$}. The categories \emph{$\Dep(\msf{A},\RR)$} and \emph{$\Depset(\msf{A},\RR)$} have all finite limits.
%\begin{proof}
%\textcolor{red}{\bf milly: togliere questa prop e mettere semplicemente {\it Finite limits. } come titolo in italico del paragrafo}

\begin{enumerate}
    \item  A terminal object in $\Dep(\msf{A},\RR)$ is given by $(1,\top,!)$. This is a terminal object also in $\Depset(\msf{A},\RR)$.
    \item A binary product of $(\msf{B},[\msf{S}],\sigma) $ and $(\msf{C},[\msf{H}], \nu)$ is given by
    \[(\msf{B}\times \msf{C}, \overline{\textbf{Prop}^r}_{\textbf{I}( p_1 \times p_1)}([\msf{S}]) \wedge \overline{\textbf{Prop}^r}_{\textbf{I}( p_2 \times p_2)}([\msf{H}]),\langle \sigma \circ (p_1 \times \msf{id}_\msf{R}), \nu \circ(  p_2 \times \msf{id}_\msf{R})\rangle) \]
    with projections inherited by those of $\msf{B}\times \msf{C}$. Since the product of two sets is a set and the conjunction of small propositions is a small proposition, the above construction restricts to $\Depset(\msf{A},\RR)$.
    \item An equalizer of two arrows $[\msf{f}],[\msf{g}]: (\msf{B},[\msf{S}],\sigma)\rightarrow\CUpsi$ is given by 
    \[(\Sigma^\msf{A}(\msf{B}, \mathbf{Col}^r_{\mathbf{I}(\langle {\msf{f}},\msf{g} \rangle) }\msf{H}), \overline{\textbf{Prop}^r}_{\textbf{I}(\msf{p}_1^{\msf{A},\Sigma}\times \msf{p}_1^{\msf{A},\Sigma})}([\msf{S}]),\eta)\]
   % where $\eta$ acts as $\sigma$ in the first component, while it is defined on the second component using witnesses for condition \ref{item: pseudo action 1} of Definition \ref{dfn: actions 2} for $\nu$, condition \ref{item: morph action 2} of Definition \ref{dfn: morphism actions 2}, and symmetry and transitivity of $\msf{H}$ for $\msf{f}$ and $\msf{g}$. 
    %\textcolor{red}{\bf milly scriverla nel linguaggio  interno.}
    { where $\eta$ acts on a realizer $r\Vdash \RR(a,a')$, a term $b:\msf{B}(a)$
    and a realizer $q:\msf{H}_a(\msf{f}(a),\msf{g}(a))$, as $\sigma_{a,a'}(r,b)$ and $q':\msf{H}_{a'}(\msf{f}(\sigma_{a,a'}(r,b)),\msf{g}(\sigma_{a,a'}(r,b)))$, where $q'$ is obtained through the concatenation of the term $q_\nu:\msf{H}_{a'}(\nu_{a,a'}(r,\msf{f}(b)),\nu_{a,a'}(r,\msf{g}(b)))$ (from condition \ref{item: pseudo action 1} of Definition \ref{dfn: actions 2} for $\nu$) and the terms $u:\msf{H}_{a'}(\msf{f}(\sigma_{a,a'}(r,b)),\nu_{a,a'}(r,\msf{f}(b)))$ and $v:\msf{H}_{a'}(\nu_{a,a'}(r,\msf{g}(b)),\msf{g}(\sigma_{a,a'}(r,b)))$ (from condition \ref{item: morph action 2} of Definition \ref{dfn: morphism actions 2} for $\msf{f}$ and $\msf{g}$ and symmetry).
    }
\end{enumerate}
%\end{proof}
%\end{prop}

%%%%%%%%%%%%%STABLE FINITE COPRODUCTS%%%%%%%%%%%%%%%

{\emph{Stable finite coproducts}.}
  
An initial object is given by $(0,\top, \sigma)$ where $\sigma$ is the unique possible one. A coproduct for $(\mathsf{B},[\msf{S}],\sigma)$ and $(\mathsf{C},[\msf{H}],\nu)$ is given by 
$$(\mathsf{B}+\mathsf{C},[\msf{S}]{+}[\msf{H}],(\sigma{+}\nu)j)$$

where 
\begin{enumerate}
\item $[\msf{S}]{+}[\msf{H}]=\exists_{\mathbf{I}(i_1\times i_1)}([\msf{S}])\vee \exists_{\mathbf{I}(i_2\times i_2)}([\msf{H}])$,
\item $j$ is the canonical isomorphism from $(\mathbf{Col}^r_{p_1}(\msf{B})+\mathbf{Col}^r_{p_1}(\msf{C}))\times \bmR$ to $ (\mathbf{Col}^r_{p_1}(\msf{B})\times \bmR)+(\mathbf{Col}^r_{p_1}(\msf{B})\times \bmR)$ which exists since $\mathbf{Col}^{r}(\msf{A}\times \msf{A})$ is extensive.

\end{enumerate}
together with the injections $[i_1]$ and $[i_2]$. One can check 
  that these injections are mono, and that coproducts are stable under pullbacks using pullbacks constructed through the finite limits of \emph{$\Dep(\msf{A},\RR)$} and \emph{$\Depset(\msf{A},\RR)$} described above. Initial objects are trivially stable, since they are stable in $\mathbf{Col}^{r}(\mathsf{A})$.

These constructions provide stable finite coproducts also in $\Depset(\msf{A},\RR)$ using the same arguments together with Theorem \ref{thm: small Prop^r properties} and Lemma \ref{rmk: I representable arrow}.\\
%\end{proof}
    
   %%%%%%%%%%%%%%LIST OBJECT%%%%%%%%%%%%%% 
  {\emph{List objects.}} %\begin{prop}\label{prop: list objects}
   % Let $(\mathsf{A},[\mathsf{R}])$ be an object of \emph{$\textbf{pEff}$}. The categories \emph{$\Dep(\msf{A},\RR)$} and \emph{$\Depset(\msf{A},\RR)$} have list objects.
%\end{prop}
%\begin{proof}
A list object for $(\msf{B},[\msf{S}],\sigma)$ is given by $(\mathsf{List}(\mathsf{B}),[\mathsf{S}_{list}],\sigma_{list})$
where
\begin{enumerate} 
\item { the relation $\Ss_{list}$ holds on two lists $x,y$ if they have the same length $l(x)=_\msf{N}l(y)$ and they have the same components: \ $\forall n\ \varepsilon \ \msf{N}(\exists u\ \varepsilon\ \msf{B}(a) 
\ \exists v\ \varepsilon\  \msf{B}(a)(\Ss_a(u,v)\wedge i_1(u)=_{\msf{B}(a)}\msf{comp}(x,n)\wedge i_1(v)=_{\msf{B}(a)}\msf{comp}(y,n))\vee \exists u'\varepsilon \ 1 \ \exists v'\varepsilon \ 1 \ (i_2(u')=_{1}\msf{comp}(x,n)\wedge i_2(v')=_{1}\msf{comp}(y,n))      )$, i.e.}
 $$\mathsf{S}_{list}=\overline{\Prop}_{\mathbf{I}(\ell\times \ell)}(\exists_{\mathbf{I}(\Delta_{\mathsf{N}})}(\top))\wedge\qquad\qquad\qquad\qquad\qquad\qquad\qquad $$ $$\forall_{\mathbf{I}(p_1)}(\overline{\Prop}_{\mathbf{I}((\langle\mathsf{comp}\circ (p_1\times \mathsf{id}_{\mathsf{N}}))\times (\langle\mathsf{comp}\circ (p_2\times \mathsf{id}_{\mathsf{N}})))}(\exists_{\mathbf{I}(i_1\times i_1)}([\mathsf{S}])\vee \exists_{\mathbf{I}(i_2\times i_2)}(\top) ))$$

where $\ell$ is the length arrow from $\mathsf{List}(\msf{B})\rightarrow \mathsf{N}$ in $\mathbf{Col}^r(\mathsf{A})$ and $\mathsf{comp}:\mathsf{List}(\msf{B})\times \mathsf{N}\rightarrow \msf{B}+1$ is the component arrow in $\mathbf{Col}^r(\mathsf{A})$ which selects the $n$-component of a list.

\item  $\sigma_{list}$ is the unique arrow in $\mathbf{Col}^r(\mathsf{A})$ making the following diagram commute

% https://q.uiver.app/#q=WzAsNSxbMCwwLCJcXG1zZntSfSJdLFsxLDAsIlxcbXNme0xpc3R9KFxcbWF0aGJme0NvbH1ecl97cF8xfShcXG1zZntCfSkpXFx0aW1lcyBcXG1zZntSfSJdLFsxLDEsIlxcbXNme0xpc3R9KFxcbWF0aGJme0NvbH1ecl97cF8yfShcXG1zZntCfSkpIl0sWzIsMSwiXFxtc2Z7TGlzdH0oXFxtYXRoYmZ7Q29sfV5yX3twXzJ9KFxcbXNme0J9KSlcXHRpbWVzIFxcbWF0aGJme0NvbH1ecl97cF8yfShcXG1zZntCfSkiXSxbMiwwLCIoXFxtc2Z7TGlzdH0oXFxtYXRoYmZ7Q29sfV5yX3twXzF9KFxcbXNme0J9KSlcXHRpbWVzIFxcbWF0aGJme0NvbH1ecl97cF8xfShcXG1zZntCfSkpIFxcdGltZXMgXFxtc2Z7Un0iXSxbMCwxLCJcXGxhbmdsZVxcZXBzaWxvblxcY2lyYyEsXFxtc2Z7aWR9XFxyYW5nbGUiXSxbMCwyLCJcXGVwc2lsb24gXFxjaXJjICEiLDJdLFsxLDIsIlxcc2lnbWFfe2xpc3R9Il0sWzMsMiwiXFxtc2Z7Y29uc30iLDJdLFs0LDEsIlxcbXNme2NvbnN9XFx0aW1lcyBcXG1zZntpZH0iLDJdLFs0LDMsIlxcbGFuZ2xlXFxzaWdtYV97bGlzdH1cXGNpcmMocF8xXFx0aW1lcyBcXG1zZntpZH1fe1xcbXNme1J9fSksIFxcc2lnbWEgXFxjaXJjIChwXzIgXFx0aW1lcyBcXG1zZntpZH1fe1xcbXNme1J9fSkiXV0=
\[\begin{tikzcd}
	{\bmR} & {\msf{List}(\mathbf{Col}^r_{p_1}(\msf{B}))\times \RR} & {(\msf{List}(\mathbf{Col}^r_{p_1}(\msf{B}))\times \mathbf{Col}^r_{p_1}(\msf{B})) \times \RR} \\
	& {\msf{List}(\mathbf{Col}^r_{p_2}(\msf{B}))} & {\msf{List}(\mathbf{Col}^r_{p_2}(\msf{B}))\times \mathbf{Col}^r_{p_2}(\msf{B})}
	\arrow["{\langle\epsilon\circ!,\msf{id}_{\bmR}\rangle}", from=1-1, to=1-2]
	\arrow["{\epsilon \circ !}"', from=1-1, to=2-2]
	\arrow["{\sigma_{list}}", from=1-2, to=2-2]
	\arrow["{\msf{cons}}"', from=2-3, to=2-2]
	\arrow["{\msf{cons}\times \msf{id}_{\bmR}}"', from=1-3, to=1-2]
	\arrow["{\langle\sigma_{list}\circ(p_1\times \msf{id}_{\RR}), \sigma \circ (p_2 \times \msf{id}_{\RR})\rangle}", from=1-3, to=2-3]
\end{tikzcd}\]
\end{enumerate}
These constructions can be performed also in $\Depset(\mathsf{A},\RR)$ thanks to Theorem \ref{thm: small Prop^r properties} and Lemma \ref{rmk: I representable arrow}.\\

{\emph{Exactness.}}

Recall that $\peff$ is exact and that the slice categories of an exact category are also exact. Hence, exactness of $\Dep(\Aa,\RR)$ follows from the fact that  $\mathbf{K}:\Dep(\Aa,\RR)\to \peff/(\Aa,[\RR])$ is a fully and faithful essentially surjective functor. To prove that $\Depset(\Aa,\RR)$ is also exact, 
observe that the coequalizer of an equivalence relation on $(\BB,[\Ss], \sigma)\in\Depset(\Aa,\RR)$
$$[\msf{c}]:\CUpsi\to (\BB,[\Ss], \sigma)\times (\BB,[\Ss], \sigma) $$

 is given by
$$[\msf{id}_{\BB}]:(\BB,[\Ss], \sigma)\to(\BB, [\Ss'], \sigma)$$

where

$$[\Ss']:= \exists_{\II(\langle p_2,p_3\rangle)}(\overline{\Prop}_{\II(\msf{c_1}\times p_1)}([\Ss])\wedge \overline{\Prop}_{\II(\msf{c_2}\times p_2)}([\Ss]))$$
supposed $[\msf{c}_1]=p_1\circ [\msf{c}]$ and $[\msf{c}_2]=p_2\circ [\msf{c}]$.

The image through $\mathbf{K}$ of $[\msf{id}_\msf{B}]$ 
\[[\II(\msf{id}_\msf{B})]:(\Sigma(\Aa,\msf{B}),\exists_{d_\msf{R}}\overline{\Prop}_{t_\sigma}([\msf{S}]))\to (\Sigma(\Aa,\msf{B}),\exists_{d_\msf{R}}\overline{\Prop}_{t_\sigma}([\msf{S}']))\]
%\[[\msf{p}_1^\Sigma]:(\Sigma(\Aa,\msf{B}),\exists_{d_\msf{R}}\overline{\Prop}_{t_\sigma}([\msf{S}']))\to (\Aa,[\RR])\]
is a coequalizer in $\peff$ of $\mathbf{K}([\msf{c}])$, that is in $\peff_{set}(\Sigma(\Aa,\msf{B}),\exists_{d_\msf{R}}\overline{\Prop}_{t_\sigma}([\msf{S}']))$ by Remark \ref{rmk: set representable arrows in peff_set}. Stability and effectiveness follow from the pullback property in Lemma \ref{lemma: well defined pseudo fuctor sets}.
\\

\emph{Local cartesian closure.}
%\begin{prop}\label{prop: locally cartenian closure}
 %   Let $(\mathsf{A},[\mathsf{R}])$ be an object of \emph{$\textbf{pEff}$}. The categories \emph{$\Dep(\msf{A},\RR)$} and \emph{$\Depset(\msf{A},\RR)$} are locally cartesian closed categories.
  %   \begin{proof}
         For two objects $(\mathsf{B},[\msf{S}],\sigma)$ and $(\mathsf{C},[\msf{H}],\nu)$ in $\Dep(\msf{A},\RR)$ consider the triple $( \msf{C}^{\msf{B}}, [\msf{H}]^{[\msf{S}]}, 
         {\nu}^{\sigma})$ defined as follows.
The object $\msf{C}^{\msf{B}}:= \Sigma^{\msf{A}}(\msf{B}\Rightarrow \msf{C},\Pi_{p_1}({\Coll}_{\II(p_2)}(\msf{S})\to {\Coll}_{\II(\langle \msf{ev}\circ (\msf{id}\times p_1),\msf{ev}\circ (\msf{id}\times p_2) \rangle)}(\msf{H}))$ is obtained through the weak exponential $\msf{B}\Rightarrow \msf{C}$, $\msf{ev}$ in $\mathbf{Col}^r(\msf{A})$. Intuitively, we restrict to those arrows $f_a\in \msf{B}(a) \Rightarrow \msf{C}(a)$ whose evaluation preserves $\msf{S}$ and $\msf{H}$, i.e.\ such that we have $s\Vdash (\forall b,b'\in \msf{B}(a))\ (\msf{S}_a(b,b')\to \msf{H}_a(f_a(b),f_a(b')))$.

         The relation $[\msf{H}]^{[\msf{S}]}$ is given by $\forall_{\II({p_1})}({\overline{\Prop}}_{\II(\langle \msf{ev}\circ (p_1\times \msf{id}),\msf{ev}\circ (p_2\times\msf{id}) \rangle)}([\msf{H}]))$. Intuitively, we relate two arrows $f_a, g_a\in \msf{B}(a) \Rightarrow \msf{C}(a)$ when we have $r\Vdash (\forall b\in \msf{B}(a))\ \msf{H}_a(f_a(b),g_a(b))$. 

         The action $\sigma^{\nu}$ is defined on a triple $((f_{a_1}, s), r)$, where $r\Vdash \msf{R}(a_1, a_2)$, as the pair $(\Tilde{f}_{a_2}, \Tilde{s})$ with $\Tilde{f}_{a_2}(b):= \nu (f_{a_1}(\sigma(b, \msf{sym}(r))), r)$ and  $\Tilde{s}$
         is obtained through $s$ and compatibility of   $\sigma$ and $\nu$ with the corresponding equalities in condition (\ref{item: pseudo action 1}) of Definition \ref{dfn: actions 2}, and   a witness $\msf{sym}$ of the symmetry of $\RR$.

         We now observe that the evaluation arrow $\msf{ev}: (\msf{B}\Rightarrow\msf{C})\times \msf{B}\to \msf{C}$  preserves $[\msf{H}]^{[\msf{S}]}$, $[\msf{H}]$ and $[\msf{S}]$. In the sense that, if $((f_a,s),b)$ and $((f'_a, s'),b')$  are such that $r\Vdash \msf{H}_a(f_a(b),f'_a(b))$  and $t\Vdash \msf{S}(b, b')$, then using $s$ (applied to $t$), $r$ and the transitivity of $[\msf{H}]$ we obtain a $q\Vdash \msf{H}_a(f_a(b),f'_a(b'))$.

         Hence, the evaluation arrow of $( \msf{C}^{\msf{B}}, [\msf{H}]^{[\msf{S}]}, 
         {\nu}^{\sigma})$ is given by $[\msf{ev}]$ and we get an exponential object instead of a weak exponential as it happened in $\mathbf{Col}^r$. 

        Since weak exponentials of two sets are sets and small propositions are closed under connectives and quantification over sets, we obtain that the above construction restricts to $\Depset(\msf{A},\RR)$.

The local cartesian closure of $\Dep(\msf{A},\RR)$ follows observing that \break$\mathbf{K}:\Dep(\mathsf{A},\RR)\rightarrow\mathbf{pEff}/(\mathsf{A},[\mathsf{R}])$ induces a fully faithful and essentially surjective functor between the slice categories
        $$\mathbf{K}/(\msf{B},[ \msf{S}], \sigma):\Dep(\mathsf{A},\RR)/(\msf{B}, [\msf{S}], \sigma)\rightarrow(\mathbf{pEff}/(\mathsf{A},[\mathsf{R}]))/\mathbf{K}(\msf{B}, [\msf{S}], \sigma)$$
        for every object $(\msf{B}, [\msf{S}], \sigma)\in \Dep(\mathsf{A},\RR)$. Moreover, we have the isomorphism 
        $$(\mathbf{pEff}/(\mathsf{A},[\mathsf{R}]))/\mathbf{K}(\msf{B}, [\msf{S}], \sigma)\cong\mathbf{pEff}/\msf{dom}(\mathbf{K}(\msf{B}, [\msf{S}], \sigma))$$
        and the fact that $\peff$ is locally cartesian closed, implies that $\Dep(\mathsf{A},\RR)$ is locally cartesian closed by reflection of exponentials. To prove that also $\Depset(\mathsf{A},\RR)$ is locally cartesian closed, we observe that for every object $(\msf{B}, [\msf{S}], \sigma)\in \Depset(\mathsf{A},\RR)$, $\mathbf{K}$ induces a fully faithful and essentially surjective functor
        $$\mathbf{K}/(\msf{B},[ \msf{S}], \sigma):(\Depset(\Aa,\RR))/(\msf{B}, [\msf{S}], \sigma)\rightarrow(\mathbf{pEff}_{set}(\mathsf{A},[\mathsf{R}]))/\mathbf{K}(\msf{B}, [\msf{S}], \sigma)$$ and we also have the isomorphism 
$$(\mathbf{pEff}_{set}(\mathsf{A},[\mathsf{R}]))/\mathbf{K}(\msf{B}, [\msf{S}], \sigma)\cong\mathbf{pEff}_{set}(\msf{dom}(\mathbf{K}(\msf{B}, [\msf{S}], \sigma)))$$
by Remark \ref{rmk: set representable arrows in peff_set},
        and $\mathbf{K}$ induces a fully faithful essentially surjective functor
        $$\mathbf{K}:\Depset(\msf{dom}(\mathbf{K}(\msf{B}, [\msf{S}], \sigma))\to\mathbf{pEff}_{set}(\msf{dom}(\mathbf{K}(\msf{B}, [\msf{S}], \sigma)))$$
        Hence, by preservation and reflection of exponentials we obtain that $\Depset(\Aa,\RR)$ is locally cartesian closed.

\end{proof}
\end{thm}

%%%%%%%%%%%%%%%%%%%%%%%%%%%%%%%%%%%%%%%%%%%%%%%%%%%%%%%%%%%%%%%%%%%%%%%%%%%%%%%%%%%%%%%%%%%%%%%%%%%%%%%%%%%%%%%%%%%%%%%%%%%%%%%%%%%%%%%%%%%%%%%%%%%%%%%%%%%%%%%%%%%%%%%%%%%%%%%%%%%%%%%%%%%%%%%%%%%%%%%%%%%%%%%%       NEW SECTION

\section{The "small subobjects" classifier}

Let $\peffpr:\peff^{op}\to\mathbf{Pos}$ denote the elementary quotient completion of $\overline{\Prop}:\CC_r\op\to\mathbf{Pos}$ whose value on an object $(\Aa,[\RR])$ is given by the descent data of $\overline{\Prop}(\Aa)$, i.e.

\begin{equation}\label{equation: descent}
    \peffpr(\Aa,[\RR]):=\left(\left\{[\msf{P}]\in\overline{\Prop}(\Aa)\ |\ \; \overline{\Prop}_{p_1}([\msf{P}])\wedge [\RR] \le \overline{\Prop}_{p_2}([\msf{P}])\right\},\le\right)\end{equation}
where $p_i:\Aa\times \Aa\to \Aa$ are the projections for $i=1,2$, and whose action on arrows is given by that of $\overline{\Prop}$.

Let $\peffprs:\peff\op\to\mathbf{Pos}$ denote the restriction of $\peffpr$ on the fibers of $\overline{\Props}:\CC_r\op\to \mathbf{Pos}$. From \cite{maietti_maschio_2021} we recall the following result which appears as Theorem 5.7.

\begin{thm}\label{thm: propq iso sub}
    The functor $\peffpr$ is a first-order hyperdoctrine equivalent to the subobjects doctrine $\peffpr\cong \mathbf{Sub}_{\peff}$.
    \qed
\end{thm}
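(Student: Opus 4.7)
The plan is to lift, through the elementary quotient completion, the identification $\overline{\Prop}\cong \mathbf{wSub}_{\CC_r}$ of Theorem~\ref{weaksub} to the identification $\peffpr\cong \mathbf{Sub}_{\peff}$. The conceptual guide is that, since $\peff$ is equivalent to the exact completion $(\CC_r)_{ex/lex}$, subobjects in the quotient should correspond precisely to descent data for weak subobjects, and this is exactly what formula~(\ref{equation: descent}) spells out.

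First I would check that $\peffpr$ is well-defined as a doctrine valued in Heyting algebras and that it is a first-order hyperdoctrine. The fact that the descent condition $\overline{\Prop}_{p_1}([\msf{P}])\wedge [\RR]\le \overline{\Prop}_{p_2}([\msf{P}])$ is closed under $\top,\bot,\wedge,\vee,\to$ follows because each connective commutes with the substitutions $\overline{\Prop}_{p_i}$ (since $\overline{\Prop}$ is a hyperdoctrine) and because $[\RR]$ itself is symmetric and transitive. For the quantifiers along an arrow $[\msf{f}]:(\Aa,[\RR])\to (\BB,[\Ss])$ of $\peff$, I would define $\exists_{[\msf{f}]}[\msf{P}]:=\exists_\msf{f}[\msf{P}]$ and $\forall_{[\msf{f}]}[\msf{P}]:=\forall_\msf{f}[\msf{P}]$ for a chosen representative $\msf{f}$, and verify that (i) the result is a descent datum for $[\Ss]$, which uses the compatibility $[\RR]\le \overline{\Prop}_{\msf{f}\times\msf{f}}[\Ss]$ of representatives together with Beck--Chevalley in $\overline{\Prop}$, and (ii) the value is independent of the representative, which uses the equivalence $\msf{f}\sim\msf{g}\iff \top\le \overline{\Prop}_{\langle\msf{f},\msf{g}\rangle}[\Ss]$.

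For the equivalence $\peffpr(\Aa,[\RR])\cong \mathbf{Sub}_{\peff}(\Aa,[\RR])$, I would exhibit explicit maps both ways. In one direction, send a descent datum $[\msf{P}]$ to the class of $[\msf{p}_1^\Sigma]:(\Sigma(\Aa,\msf{P}),\overline{\Prop}_{\II(\msf{p}_1^\Sigma\times \msf{p}_1^\Sigma)}[\RR])\to (\Aa,[\RR])$. That this arrow is a monomorphism follows from the mono characterization for elementary quotient completions of \cite[Corollary~4.8]{qu12}: the pulled-back equivalence relation on $\Sigma(\Aa,\msf{P})$ must coincide with the one that makes this a subobject, which is exactly the content of the descent condition on $\msf{P}$. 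In the other direction, given a mono $[\msf{m}]:(\BB,[\Ss])\rightarrowtail (\Aa,[\RR])$, send it to $\exists_\msf{m}(\top)\in\overline{\Prop}(\Aa)$; compatibility of $\msf{m}$ with the equivalence relations together with Frobenius in $\overline{\Prop}$ yields the descent condition. These assignments are mutually inverse modulo isomorphism of subobjects, order-preserving and order-reflecting, so they yield an iso of posets naturally in $(\Aa,[\RR])$.

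The main obstacle is checking that this natural isomorphism transports the hyperdoctrine structure, i.e.\ that the quantifiers defined above on $\peffpr$ correspond under the equivalence to the image and dual-image factorizations in $\peff$. Here I would invoke that $\peff$ is a locally cartesian closed list-arithmetic pretopos (Theorem~\ref{thm: peff lcc pretops list}), so $\mathbf{Sub}_{\peff}$ is itself a first-order hyperdoctrine whose quantifiers are constructed from stable image factorizations; pulling these back along the equivalence and comparing with the weak-subobject quantifiers $\exists_\msf{f},\forall_\msf{f}$ in $\overline{\Prop}$ (which already implement image and dual-image factorizations in $\CC_r$ up to poset reflection) yields the required agreement. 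Once this is verified, the hyperdoctrine claim for $\peffpr$ follows automatically from the corresponding property of $\mathbf{Sub}_{\peff}$.
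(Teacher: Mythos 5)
Note first that the paper does not prove this statement at all: it is recalled verbatim from \cite{maietti_maschio_2021} (Theorem 5.7), so your proposal can only be compared with the standard argument that lifts Theorem~\ref{weaksub} through the elementary quotient completion. Your overall architecture — descent data versus subobjects, comprehension in one direction and images in the other, with \cite[Corollary 4.8]{qu12} controlling the monos — is the right one and is essentially what the cited proof does.

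There is, however, a genuine and recurring gap: you systematically omit the saturation of existential images with respect to the equivalence relation on the target, and without it several of your claimed verifications fail. First, for a general arrow $[\msf{f}]:(\Aa,[\RR])\to(\BB,[\Ss])$ the assignment $\exists_{[\msf{f}]}[\msf{P}]:=\exists_{\msf{f}}[\msf{P}]$ does not produce an element of $\peffpr(\BB,[\Ss])$. Take $\Aa=\{x\,|\,x=0\}$ and $\BB=\{x\,|\,x=0\vee x=1\}$ with $\RR$ the equality, $\Ss=\top$, $\msf{f}$ the inclusion and $[\msf{P}]=\top$: then $\exists_{\msf{f}}(\top)$ is the subclass $\{0\}$ of $\BB$, which violates the descent condition of (\ref{equation: descent}), even though $[\msf{f}]$ is an isomorphism in $\peff$. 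Beck--Chevalley and the compatibility $[\RR]\le\overline{\Prop}_{\msf{f}\times\msf{f}}[\Ss]$ cannot rescue this; the left adjoint to the restricted substitution is the $[\Ss]$-saturation $\exists_{p_2}\bigl(\overline{\Prop}_{p_1}(\exists_{\msf{f}}[\msf{P}])\wedge[\Ss]\bigr)$, and dually for $\forall$. The unsaturated formula does happen to work along product projections (the fibre over an $\RR$-related point is inhabited by the same second coordinate, using reflexivity), which is all a first-order hyperdoctrine strictly requires for the quantifiers; but it fails along diagonals, where the equality predicate of $\peffpr$ at $(\Aa,[\RR])$ must be $[\RR]$ itself rather than $\exists_{\Delta_{\Aa}}(\top)$ — this is the defining feature of the quotient completion and your prescription misses it. Second, the same defect breaks your map $\mathbf{Sub}_{\peff}\to\peffpr$: with the same counterexample (now reading $\msf{f}$ as a mono $\msf{m}$), $\exists_{\msf{m}}(\top)$ does not lie in $\peffpr(\Aa,[\RR])$, and compatibility of $\msf{m}$ with the relations plus Frobenius does not, as you claim, yield the descent condition; you must send $[\msf{m}]$ to the $[\RR]$-saturation of $\exists_{\msf{m}}(\top)$. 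Once both constructions are saturated the two assignments are indeed mutually inverse (a descent datum equals its own saturation, and monos in the quotient completion are comprehensions up to isomorphism), and the remainder of your plan goes through.
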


We now provide a similar correspondence for $\peffprs$ regarding suitable subobjects. We will denote with $\mathbf{Sub}_{\peff}^\mathbf{s}$ the doctrine which sends an object $(\msf{A},[\msf{R}])\in\peff$ to the poset of subobjects of $\peff$ over $(\msf{A},[\msf{R}])$ which have a representative in $\peff_{set}^\to$. This is a well-defined functor thanks to Lemma \ref{lemma: well defined pseudo fuctor sets}. We will refer to these objects as \emph{small subobjects}.

\begin{lemma}\label{lemma: mono charac 2}
    Let $(\mathsf{A},\mathsf{R})$ be an object of \emph{$\textbf{pEff}$}. Given an element $(\BB, [\Ss],\sigma)\in \Dep(\Aa,[\RR])$, $\mathbf{K}(\BB, [\Ss],\sigma)$ is a monomorphism in $\peff$ if and only if $[\Ss]=\top$.
    \begin{proof}
       This follows from Lemma \ref{lemma: mono characterization}, and the fact that an arrow $m$ in $\peff$ with codomain $(\Aa,[\RR])$ is a monomorphism  if and only if it is a monomorphism in $\peff/(\Aa,[\RR])$ seen as an arrow from $m$ to $id_{(\Aa,[\RR])}$. Indeed, $id_{(\Aa,[\RR])}$ is isomorphic to $\mathbf{K}(1,\top, !)$.
    \end{proof}
    
\end{lemma}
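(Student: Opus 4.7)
The plan is to deduce the statement by combining three observations: that monomorphisms in $\peff$ with a fixed codomain $(\Aa,[\RR])$ are the same as monomorphisms in the slice $\peff/(\Aa,[\RR])$ over the terminal object; that the equivalence $\mathbf{K}\colon \Dep(\Aa,\RR)\to \peff/(\Aa,[\RR])$ of Proposition \ref{prop: K equiv} reflects and preserves monomorphisms between objects in its image (being full, faithful and essentially surjective); and finally Lemma \ref{lemma: mono characterization}, which characterizes monos in $\Dep(\Aa,\RR)$ in terms of equivalence relations.

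First, I would recall that the terminal object of $\Dep(\Aa,\RR)$ built in the proof of Theorem \ref{thm: pretopos} is $(1,\top,!)$, whose image $\mathbf{K}(1,\top,!)$ is the identity arrow $id_{(\Aa,[\RR])}$, i.e.\ the terminal object of the slice $\peff/(\Aa,[\RR])$. Consequently the arrow $\mathbf{K}(\BB,[\Ss],\sigma) = [\msf{p}_1^\Sigma]$, regarded in the slice category as the map from $\mathbf{K}(\BB,[\Ss],\sigma)$ to the terminal $id_{(\Aa,[\RR])}$, is precisely $\mathbf{K}([!])$, where $[!]\colon (\BB,[\Ss],\sigma)\to (1,\top,!)$ is the unique arrow in $\Dep(\Aa,\RR)$.

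Next, I would apply the standard fact that an arrow in a slice category is a monomorphism if and only if its underlying arrow in the ambient category is a monomorphism. This reduces the question of $\mathbf{K}(\BB,[\Ss],\sigma)$ being mono in $\peff$ to the question of $\mathbf{K}([!])$ being mono in $\peff/(\Aa,[\RR])$. Since $\mathbf{K}$ is fully faithful, this in turn is equivalent to $[!]$ being mono in $\Dep(\Aa,\RR)$.

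Finally, I would invoke Lemma \ref{lemma: mono characterization}, which says $[!]$ is a monomorphism precisely when $[\Ss]=\overline{\mathbf{Prop}^r}_{\II(!\times !)}(\top)$; since substitution of $\top$ along any arrow yields $\top$, this reduces to $[\Ss]=\top$, as required. No step is really difficult here; the only mildly delicate point is to observe cleanly that $\mathbf{K}$ sends the terminal of $\Dep(\Aa,\RR)$ to the terminal of the slice so that Lemma \ref{lemma: mono characterization} can be applied directly to the unique arrow to $(1,\top,!)$.
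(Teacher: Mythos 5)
Your proof is correct and follows essentially the same route as the paper's: reduce mono-ness in $\peff$ to mono-ness in the slice $\peff/(\Aa,[\RR])$ over its terminal object, identify that terminal (up to isomorphism) with $\mathbf{K}(1,\top,!)$, use full faithfulness of $\mathbf{K}$ to transfer to the unique arrow into $(1,\top,!)$ in $\Dep(\Aa,\RR)$, and conclude by Lemma~\ref{lemma: mono characterization} together with $\overline{\mathbf{Prop}^r}_{\II(!\times !)}(\top)=\top$. You merely spell out the steps that the paper's one-line proof leaves implicit (the only tiny imprecision is asserting $\mathbf{K}(1,\top,!)$ \emph{is} the identity rather than isomorphic to it, which is harmless since mono-ness is invariant under isomorphism).
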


\begin{prop}
    The functor $\peffprs$ is equivalent to the small subobjects doctrine $\mathbf{Sub}_{\peff}^\mathbf{s}$.
    \begin{proof}
        Given an object $(\Aa,[\RR])\in\peff$, consider a subobject over it which has a representative of the form $\mathbf{K}(\BB, [\Ss],\sigma)$ where $(\BB, [\Ss],\sigma)\in\Depset(\Aa,\RR)$. We can associate to it the element $[\BB]\in\overline{\Props}(\Aa)$. Thanks to $\sigma$, it follows that $[\BB]\in \peffprs(\Aa, [\RR])$. Moreover, for two monomorphisms of this form, it holds that $\mathbf{K}(\BB, [\Ss],\sigma)\le \mathbf{K}(\BB', [\Ss'],\sigma')$ if and only if $[\BB]\le[\BB']$ in $ \peffprs(\Aa, [\RR])$. Conversely, given a  $[\msf{P}]\in \peffprs(\Aa, [\RR])$ and a representative $\sigma: \Coll_{p_1}(\msf{P})\times \RR\to\Coll_{p_2}(\msf{P})$ (whose existence is guaranteed by (\ref{equation: descent})), we associate the equivalence class of $\mathbf{K}(\msf{P},\top,\sigma)$ which is a monomorphism thanks to Lemma \ref{lemma: mono charac 2}. This provides the desired equivalence.
    \end{proof}
\end{prop}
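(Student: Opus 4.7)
The plan is to mimic the proof of Theorem~\ref{thm: propq iso sub} (which handles the non-small case) but restricting the argument via the characterization of monomorphisms of Lemma~\ref{lemma: mono charac 2}, and using essential surjectivity of $\mathbf{K}$ restricted to $\Depset(\Aa,\RR)$ to control the representatives. Concretely, I would define two maps between the posets $\peffprs(\Aa,[\RR])$ and $\mathbf{Sub}_{\peff}^{\mathbf{s}}(\Aa,[\RR])$ for each object $(\Aa,[\RR])\in\peff$, check they are mutually inverse (up to equivalence), monotone, and natural in $(\Aa,[\RR])$.

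First I would go from left to right. Given $[\msf{P}]\in \peffprs(\Aa,[\RR])$, the descent condition $\overline{\Prop}_{p_1}([\msf{P}])\wedge [\RR]\le \overline{\Prop}_{p_2}([\msf{P}])$ guarantees the existence of an arrow $\sigma\colon \Coll_{p_1}(\msf{P})\times \RR \to \Coll_{p_2}(\msf{P})$ in $\Coll(\Aa\times\Aa)$, which since $\msf{P}\in\Set(\Aa)$ gives a triple $(\msf{P},\top,\sigma)\in\Depset(\Aa,\RR)$ (the action conditions being trivially witnessed because the equivalence relation is $\top$). Applying $\mathbf{K}$ and using Lemma~\ref{lemma: mono charac 2} we obtain a small monomorphism $\mathbf{K}(\msf{P},\top,\sigma)$, and since $(\msf{P},\top,\sigma)\in\Depset$, its image lies in $\peff_{set}^\to$, hence represents an element of $\mathbf{Sub}_{\peff}^{\mathbf{s}}(\Aa,[\RR])$. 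I would check that the equivalence class of this subobject is independent of the chosen $\sigma$ (two choices of $\sigma$ give isomorphic triples in $\Depset(\Aa,\RR)$ because $\mathsf{id}_{\msf{P}}$ trivially respects the trivial equivalence relation and commutes with any two actions up to $\top$), and independent of the representative $\msf{P}$ of $[\msf{P}]$.

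Second, I would go from right to left. A small subobject is represented by a monomorphism $m\colon(\msf{C},[\msf{H}])\hookrightarrow(\Aa,[\RR])$ with a representative in $\peff_{set}^\to$, i.e.\ isomorphic in $\peff/(\Aa,[\RR])$ to $\mathbf{K}(\BB,[\Ss],\sigma)$ for some $(\BB,[\Ss],\sigma)\in\Depset(\Aa,\RR)$, by the essential surjectivity of $\mathbf{K}$ restricted to $\Depset$ (which holds by Proposition~\ref{prop: K equiv} together with the definition of $\peff_{set}$ and Remark~\ref{rmk: equivalent categories same images}). Since $m$ is mono, Lemma~\ref{lemma: mono charac 2} forces $[\Ss]=\top$, and I associate the element $[\BB]\in\overline{\Props}(\Aa)$, which lies in $\peffprs(\Aa,[\RR])$ because $\sigma$ is precisely a witness of the descent condition on $[\BB]$. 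I then need to verify that two such representatives $(\BB,\top,\sigma)$ and $(\BB',\top,\sigma')$ yielding isomorphic subobjects give the same element $[\BB]=[\BB']$ in $\overline{\Props}(\Aa)$: an isomorphism in $\Depset(\Aa,\RR)$ between them provides arrows $\BB\to\BB'$ and $\BB'\to\BB$ in $\Set(\Aa)$, which by definition of the poset reflection $\overline{\Props}$ means $[\BB]=[\BB']$.

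The two assignments are monotone (an inequality $[\msf{P}]\le[\msf{P}']$ in $\peffprs(\Aa,[\RR])$ gives an arrow $\msf{P}\to\msf{P}'$ in $\Set(\Aa)$, which by $\mathbf{K}$ induces a factorization of the corresponding monomorphisms; conversely a factorization $\mathbf{K}(\BB,\top,\sigma)\le\mathbf{K}(\BB',\top,\sigma')$ yields an arrow $\BB\to\BB'$ in $\Set(\Aa)$ by the fully faithfulness of $\mathbf{K}$) and mutually inverse by construction on representatives. Naturality in $(\Aa,[\RR])$ follows from the compatibility of reindexing in $\peff_{set}$ (Lemma~\ref{lemma: well defined pseudo fuctor sets}) with the reindexing of $\overline{\Props}$. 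The main (minor) obstacle is just checking that everything is well-defined modulo the various equivalence relations: independence of the choice of $\sigma$ and of representatives of $[\msf{P}]$ and of the small subobject; this is where Lemma~\ref{lemma: mono charac 2} does the key work by collapsing the $[\Ss]$ data to $\top$, so that the only remaining information is the object $[\BB]\in\overline{\Props}(\Aa)$ together with the descent datum $\sigma$, which is precisely what $\peffprs$ records.
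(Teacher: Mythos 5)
Your proposal is correct and follows essentially the same route as the paper: both directions of the correspondence are the ones the paper uses (descent datum $\sigma$ from \eqref{equation: descent} giving $\mathbf{K}(\msf{P},\top,\sigma)$ one way, and extraction of $[\BB]\in\overline{\Props}(\Aa)$ from a representative $\mathbf{K}(\BB,[\Ss],\sigma)$ the other way), with Lemma~\ref{lemma: mono charac 2} playing the same role of collapsing $[\Ss]$ to $\top$. The extra well-definedness, monotonicity, and naturality checks you spell out are exactly the details the paper leaves implicit, so there is no divergence in method.
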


Applying the same argument in  \cite[Thm. 5.12]{maietti_maschio_2021}, and combining it with the previous proposition, we obtain a classifier for small subobjects.

\begin{thm}\label{thm: small sub classifier}
There exists an object $\Omega$ which represents $\mathbf{Sub}_{\peff}^\mathbf{s}$, i.e.\ there exists a natural isomorphism between the functors $\mathbf{Sub}_{\peff}^\mathbf{s}(-)$ and $\peff(-,\Omega)$.
    \qed
\end{thm}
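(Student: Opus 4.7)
The plan is to reduce the statement, via the immediately preceding proposition, to exhibiting an object $\Omega \in \peff$ that represents $\peffprs$, and then to adapt the argument of \cite[Thm. 5.12]{maietti_maschio_2021} (which established the analogous classifier for $\peffpr$) to the small case. Because $\peffprs$ is defined as the restriction of $\peffpr$ to fibres of $\overline{\Props}$, almost every step of the original proof transfers; the only real work is to replace the ambient ``universe of all realized propositions'' used in loc.\ cit.\ by a universe of \emph{small} realized propositions in $\CC_r$.

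First I would introduce, parallel to the universe of sets $\msf{U}_{\mathsf{S}}$ recalled in Section 3, a universe object $\msf{U}_{\msf{P}} \in \CC_r$ whose elements are codes for small realized propositions, together with an internal decoding $\msf{T}$ assigning to each code $u\,\varepsilon\,\msf{U}_{\msf{P}}$ the corresponding small realized proposition. In $\tar$ this is obtained as a fixpoint of a suitable admissible formula exactly in the style of \cite{IMMSt}; in the $\mathbf{CZF}$-based metatheories it is available by the constructions of \cite{mmr21,mmr22}. On $\msf{U}_{\msf{P}} \times \msf{U}_{\msf{P}}$ I define $[\sim]$ as the equivalence class of the bi-implication relation ``$\msf{T}(u) \leftrightarrow \msf{T}(v)$'' in $\overline{\Prop}(\msf{U}_{\msf{P}} \times \msf{U}_{\msf{P}})$, noting that closure under $\leftrightarrow$ of small propositions (Theorem \ref{thm: small Prop^r properties}) ensures $[\sim]$ lies in $\overline{\Props}$. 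Then I set $\Omega := (\msf{U}_{\msf{P}}, [\sim])$.

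Next I would produce the natural bijection $\peffprs(\Aa,[\RR]) \cong \peff((\Aa,[\RR]),\Omega)$ via characteristic maps. Given $[\msf{P}] \in \peffprs(\Aa,[\RR])$, the descent condition in (\ref{equation: descent}) yields a representative $\msf{P}$ and a realizer witnessing compatibility of $\msf{P}$ with $\RR$. From this data I build an operation $\chi_{\msf{P}} : \Aa \to \msf{U}_{\msf{P}}$ in $\CC_r$ sending $a$ to the code of $\msf{P}(a)$; the compatibility witness translates exactly into the condition $[\RR]\le \overline{\Prop}_{\chi_{\msf{P}}\times \chi_{\msf{P}}}([\sim])$, so $[\chi_{\msf{P}}]$ is an arrow in $\peff$. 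Conversely, any representative $f : \Aa \to \msf{U}_{\msf{P}}$ of an arrow $(\Aa,[\RR])\to \Omega$ produces a small proposition $\overline{\Prop}_f([\msf{T}])$ that descends along $\RR$ by the morphism condition in $\peff$. These assignments are mutually inverse up to the equivalence classes defining both sides, essentially by definition of $\msf{T}$ as the generic decoding.

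The main obstacle I foresee is bookkeeping: checking that passage to equivalence classes (for arrows in $\CC_r$, for elements of $\overline{\Props}$, and for arrows in $\peff$) is consistent throughout, and that naturality of the bijection in $(\Aa,[\RR])$ holds, i.e.\ that for every arrow $[g]:(\Aa',[\RR'])\to(\Aa,[\RR])$ the characteristic map of the reindexed small subobject $\peffprs_{[g]}([\msf{P}])$ is $[\chi_{\msf{P}}]\circ [g]$. This last identity is immediate at the level of $\CC_r$-representatives because both sides are computed by the same substitution of realized propositions, so naturality reduces to a formal verification. Once $\msf{U}_{\msf{P}}$ and $\msf{T}$ are in place, the rest of the argument is structurally identical to \cite[Thm. 5.12]{maietti_maschio_2021}.
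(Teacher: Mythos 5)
Your proposal is correct and follows essentially the same route as the paper, which simply invokes the argument of \cite[Thm.\ 5.12]{maietti_maschio_2021} together with the preceding proposition identifying $\mathbf{Sub}_{\peff}^\mathbf{s}$ with $\peffprs$; your construction of $\Omega$ as a universe of codes for small propositions quotiented by bi-implication, with characteristic maps giving the natural bijection, is exactly the argument being transferred. The only slight imprecision is your parenthetical description of the cited theorem as classifying all of $\peffpr$ (a predicative topos cannot classify arbitrary subobjects), but this does not affect the proof you give.
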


 We now summarize the main results obtained in this work.
\begin{thm}\label{main theorem}
    The category $\peff$ is equipped with two fibrations 
    \[\begin{tikzcd}
		\\
		{{\mathbf{pEff}^\to_{set}}} && {{\mathbf{pEff}}^\to} \\
		\\
		& {\mathbf{pEff}}
		\arrow["{\mathsf{cod}_{set}}"', from=2-1, to=4-2]
		\arrow["{\mathsf{cod}}", from=2-3, to=4-2]
  \arrow[hook, from=2-1, to=2-3]
	\end{tikzcd}\]
satisfying the following properties:
 \begin{enumerate}
      \item $\peff$ is a locally cartesian closed list arithmetic pretopos;
     %\item $\overline{(\overline{\Prop})}:\peff\op \to \mathbf{Heyt}$ is a first-order hyperdoctrine equivalent to $\mathbf{Sub}_\peff$;
%\textcolor{red}{\bf milly questa sopra numero 2 si deduce e va tolta perche' fa parte di come si e' costruito peff}
     
     \item for each $(\Aa,[\RR])\in\peff$ the fiber $\mathsf{cod}_{set}^{-1}(\Aa,[\RR])$ 
     is a locally cartesian closed list arithmetic pretopos;
     \item for every arrow $[f]:(\Aa,[\RR])\to (\Aa',[\RR'])$ in $\peff$, the functor $$[f]^*:\mathsf{cod}_{set}^{-1}(\Aa',[\RR'])\to \mathsf{cod}_{set}^{-1}(\Aa,[\RR])$$ preserves the locally cartesian closed  list-arithmetics pretopos structure;
     \item for each $(\Aa,[\RR])\in\peff$ the inclusion $\mathsf{cod}_{set}^{-1}(\Aa,[\RR])\to \peff/(\Aa,[\RR])$ preserves the locally cartesian closed  list-arithmetics pretopos structure;
     \item there exists an object $\Omega$ classifying $\mathbf{Sub}_{\peff}^\mathbf{s}$;
     \item for every $[f]\in \mathsf{cod}_{set}^{-1}(\Aa,[\RR])$ there exists an exponential object $(\pi_\Omega)^{[f]}$ in $\peff/(\Aa,[\RR])$ where $\pi_{\Omega}:(\Aa,[\RR])\times \Omega \to (\Aa,[\RR])$ is the first projection;
     \item $\peff$ satisfies the \emph{formal Church's thesis} (see  \cite[Theorem 5.8]{maietti_maschio_2021}).
 \end{enumerate}
   \begin{proof}
       1.\ follows from Theorem \ref{thm: peff lcc pretops list}, 2.\ follows from Theorem \ref{thm: propq iso sub}, 3.\, 4.\, 5.\ and 6.\ follow from the constructions defined to prove Theorem \ref{thm: pretopos}. 7.\ follows from Theorem \ref{thm: small sub classifier}.
   \end{proof} 
\end{thm}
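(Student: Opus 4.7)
The plan is that this theorem is a compendium of what has been assembled across the paper; each of the seven items reduces either to an earlier theorem or to a short transfer argument through the equivalences $\mathbf{K}:\Dep(\Aa,\RR)\to \mathsf{cod}^{-1}(\Aa,[\RR])$ and its restriction $\Depset(\Aa,\RR)\to\mathsf{cod}_{set}^{-1}(\Aa,[\RR])$ supplied by Proposition \ref{prop: K equiv}, together with the independence from the chosen representative $\RR\in[\RR]$ given by Lemma \ref{lemma: indipendence representative} and Remark \ref{rmk: equivalent categories same images}. The strategy is therefore simply to dispatch each clause by naming the relevant construction and pointing out which earlier result does the work.

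Items (1) and (2) are direct: (1) is Theorem \ref{thm: peff lcc pretops list}, and (2) is obtained by first identifying, via Lemma \ref{lemma: well defined pseudo fuctor sets} and Remark \ref{rmk: set representable arrows in peff_set}, the fiber $\mathsf{cod}_{set}^{-1}(\Aa,[\RR])$ with (the essential image of $\mathbf{K}$ on) $\Depset(\Aa,\RR)$, and then invoking the part of Theorem \ref{thm: pretopos} concerning $\Depset$. For (3), the reindexing functor $[f]^{*}$ corresponds under $\mathbf{K}$ to the explicit assignment of Lemma \ref{lemma: well defined pseudo fuctor sets}, sending $(\BB,[\Ss],\sigma)$ to the triple involving $\Set_{f}(\BB)$ and $(\overline{\Props})_{\Sigma(f,\BB\times\BB)}[\Ss]$; since every finite limit, stable finite coproduct, parametrized list object and exponential produced in the proof of Theorem \ref{thm: pretopos} is written in terms of $\mathbf{Col}^{r}_{(-)}$, $\mathbf{Set}^{r}_{(-)}$, the substitution behaviour of $\overline{\mathbf{Prop}^{r}}$ and the arrows $\mathbf{I}$ and $\Sigma$, the naturality of these indexed constructions yields preservation under $[f]^{*}$. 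Item (4) is analogous: the formulas for finite limits, coproducts, list objects and exponentials in $\Depset(\Aa,\RR)$ given in the proof of Theorem \ref{thm: pretopos} coincide with those in $\Dep(\Aa,\RR)$, the only difference being the smallness of the underlying $\BB$ and $[\Ss]$, so the inclusion tautologically preserves the structure.

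For (5) one invokes Theorem \ref{thm: small sub classifier}. For (6), observe that $\pi_{\Omega}$ lies in $\mathsf{cod}_{set}^{-1}(\Aa,[\RR])$ because $\Omega$ is a small object; the local cartesian closure of this fiber from (2) then produces the exponential $(\pi_{\Omega})^{[f]}$ within the fiber, which by (4) is also an exponential in $\peff/(\Aa,[\RR])$. Finally (7) is already proved as \cite[Theorem 5.8]{maietti_maschio_2021}. The only non-cosmetic work is in (3) and (4): one must track, case by case, that each formula used in the proof of Theorem \ref{thm: pretopos} is built from operations that commute with substitution along an arbitrary $[f]$ in $\peff$ and with the forgetful inclusion $\Depset\hookrightarrow\Dep$. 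The formulas there are already written in a shape that makes this verification routine via the naturality of $\mathbf{Col}^{r}_{(-)}$, $\mathbf{Set}^{r}_{(-)}$ and $\overline{\mathbf{Prop}^{r}}_{(-)}$, so the main obstacle is keeping the bookkeeping tidy rather than any essentially new categorical argument.
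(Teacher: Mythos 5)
Your overall strategy --- dispatching each clause by citing the earlier results and transferring structure through $\mathbf{K}$, Lemma~\ref{lemma: indipendence representative}, Lemma~\ref{lemma: well defined pseudo fuctor sets} and Remark~\ref{rmk: set representable arrows in peff_set} --- is exactly how the paper argues (its proof is a one-line compendium of the same citations), and your attributions for items (1)--(5) and (7) are correct; indeed your reference of item (2) to Theorem~\ref{thm: pretopos} is the sensible one.

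There is, however, a genuine flaw in your argument for item (6). You claim that $\pi_{\Omega}$ lies in $\mathsf{cod}_{set}^{-1}(\Aa,[\RR])$ ``because $\Omega$ is a small object'', and then obtain the exponential inside the fiber via (2) and export it via (4). But $\Omega$ is \emph{not} small: it classifies small subobjects, i.e.\ it is built from the universe $\msf{U}_{\msf{S}}$ of (codes of) sets, which is a realized collection that is not itself a set --- precisely the predicativity of $\peff$ hinges on the collection of small propositions not being a set. Consequently $\pi_{\Omega}$ is not an object of $\mathsf{cod}_{set}^{-1}(\Aa,[\RR])$, and the route through (2) and (4) is unavailable. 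The statement of (6) is nevertheless true for a simpler reason, which is the one the paper intends: the exponential $(\pi_{\Omega})^{[f]}$ is required to exist only in $\peff/(\Aa,[\RR])$, and that slice is cartesian closed because $\peff$ is locally cartesian closed (item (1), Theorem~\ref{thm: peff lcc pretops list}). The content of item (6) is not that $\pi_{\Omega}$ is fibred in sets, but that the base $[f]$ of the exponential ranges over families of sets --- this is what is needed later to model power-collections of subsets --- so no smallness of $\Omega$ should be invoked. Replace your argument for (6) by this direct appeal to local cartesian closure and the rest of your proof stands.
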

The above properties testify that $\peff$ is a 
{\it fibred predicative variant} of Hyland's Effective Topos. Indeed, the above properties are validated after substituting $\peff$ with $\eff$ and identified fibred sets with all the objects of a slice category of $\eff$.

%%%%%%%%%%%%%%%%%%%%%%%%%%%%%%%%%%%%%%%%%%%%%%%%%%%%%%%%%%%%%%%%%%%%%%%%%%%%%%%%%%%%%%%%%%%%%%%%%%%%%%%%%%%%%%%%%%%%%%%%%%%%%%%%%%%%%%%%%%%%%%%%%%%%%%%%%%%%%%%%%%%%%%%%%%%%%%%%%%%%%%%%%%%%%%%%%%%%%%%%%%%%%%%%       NEW SECTION

\section{Towards a direct interpretation of $\mathbf{emTT}$ into $\peff$}
The realizability interpretation of the intensional level of the Minimalist Foundation in \cite{IMMSt,mmr21,mmr22} can be turned into a morphism of doctrines from the syntactical doctrine $G^{\mtt}$ arising from $\mtt$ (as introduced in \cite{qu12} sec.7.2) to the doctrine $\overline{\Prop}$, as represented on the left side of the following diagram.

\[% https://q.uiver.app/#q=WzAsNixbMCwwLCJcXG1hdGhjYWx7Q019XFxvcCJdLFsyLDAsIlxcQ0Nfclxcb3AiXSxbMSwxLCJcXG1hdGhiZntQb3N9Il0sWzQsMCwiXFxtYXRoY2Fse1F9X3tHXlxcbWF0aGJme21UVH19XFxvcCJdLFs2LDAsIlxccGVmZlxcb3AiXSxbNSwxLCJcXG1hdGhiZntQb3N9Il0sWzAsMiwiR15cXG1hdGhiZnttVFR9IiwyXSxbMSwyLCJcXG92ZXJsaW5le1xcbWF0aGJme1Byb3B9XnJ9Il0sWzAsMV0sWzQsNSwiKFxcb3ZlcmxpbmV7XFxtYXRoYmZ7UHJvcH1ecn0pX3EiXSxbMyw1LCIoR15cXG1hdGhiZnttVFR9KV9xIiwyXSxbMyw0XSxbNiw3LCIiLDIseyJzaG9ydGVuIjp7InNvdXJjZSI6MjAsInRhcmdldCI6MjB9LCJsZXZlbCI6MX1dLFsxMCw5LCIiLDIseyJzaG9ydGVuIjp7InNvdXJjZSI6MjAsInRhcmdldCI6MjB9LCJsZXZlbCI6MX1dXQ==
\begin{tikzcd}[row sep=large]
	{\mathcal{CM}\op} && {\CC_r\op} && {\mathcal{Q}_{G^\mathbf{mTT}}\op} && {\peff\op} \\
	& {\mathbf{Pos}} &&&& {\mathbf{Pos}}
	\arrow[from=1-1, to=1-3]
	\arrow[""{name=0, anchor=center, inner sep=0}, "{G^\mathbf{mTT}}"', from=1-1, to=2-2]
	\arrow[""{name=1, anchor=center, inner sep=0}, "{\overline{\mathbf{Prop}^r}}\cong \mathbf{wSub}_{\CC_r}", from=1-3, to=2-2]
	\arrow[from=1-5, to=1-7]
	\arrow[""{name=2, anchor=center, inner sep=0}, "{\overline{(G^\mathbf{mTT})}}"', from=1-5, to=2-6]
	\arrow[""{name=3, anchor=center, inner sep=0}, "{\peffpr\cong \mathbf{Sub}_\peff}", from=1-7, to=2-6]
	\arrow[shorten <=7pt, shorten >=7pt, from=0, to=1]
	\arrow[shorten <=8pt, shorten >=8pt, from=2, to=3]
\end{tikzcd}\]

Such a morphism lifts to a morphism between the corresponding elementary quotient completions, as represented on the right side of the above diagram. In the category $\mathcal{Q}_{G^{\mathbf{mTT}}}$ one can define an adequate notion of canonical isomorphisms, as done in \cite{m09}, thus producing a model for $\mathbf{emTT}$.
This, combined with the morphism above and the fibered structure of $\peff$ provided in this article, makes possible a direct interpretation of $\mathbf{emTT}$, also extended with inductive and coinductive predicates \cite{mmr21,mmr22,mfps23,phdthesisSabelli}, within $\peff$.

%%%%%%%%%%%%%%%%%%%%%%%%%%%%%%%%%%%%%%%%%%%%%%%%%%%%%%%%%%%%%%%%%%%%%%%%%%%%%%%%%%%%%%%%%%%%%%%%%%%%%%%%%%%%%%%%%%%%%%%%%%%%%%%%%%%%%%%%%%%%%%%%%%%%%%%%%%%%%%%%%%%%%%%%%%%%%%%%%%%%%%%%%%%%%%%%%%%%%%%%%%%%%%%%       NEW SECTION

\section{Comparison with related works}
In \cite{VDB}, van den Berg and Moerdijk propose a construction of a predicative rendering of Hyland's Effective Topos 
in the wider context of algebraic set theory. Starting from a category with small maps $(\mathcal{E},\mathcal{S})$
they first construct a category of assemblies 
$\mathcal{A}sm_{\mathcal{E}}$ endowed with a family of maps induced on it by $\mathcal{S}$, and identified its exact completion   $(\mathcal{Eff}_{\mathcal{E}}, \mathcal{S}_{\mathcal{E}})$ as their notion of predicative realizability category (where
$\mathcal{Eff}_{\mathcal{E}}$ is a suitable subcategory of the exact on regular completion of  $\mathcal{A}sm_{\mathcal{E}}$).
We can compare our construction with theirs,
 as soon as we take for $\mathcal{E}$, the category  $\mathcal{E}[\mathbf{T}]$
 of definable classes in an extension $\mathbf{T}$ of $\mathbf{CZF}$ having the numerical existence property, endowed with the collection $\mathcal{S}[\mathbf{T}]$ of set-fibered functional relations between them (see sec.8 of \cite{VDBI}). Examples of such a theory $\mathbf{T}$ are $\mathbf{CZF}$ itself and  $\mathbf{CZF}+\mathbf{REA}$ as proven in \cite{rathjenexistence} and also
 $\mathbf{CZF}+\mathbf{RDC}+\bigcup-\mathbf{REA}$ 
 (by adequately modifying the proof of Theorem 7.2 in \cite{rathjenexistence} and combining it with Theorem 7.4 in \cite{RATAC}).
 \iffalse
 it is possible to prove that also $\mathbf{CZF}+\mathbf{RDC}+\bigcup-\mathbf{REA}$ has the numerical existence property). 
 
 One can find other examples of such theories in $\cite{RATAC}$.
 \fi
 
Over such a category, assemblies are pairs $(A,\alpha)$ in which $A$ is a definable class of $\mathbf{T}$ and $\alpha$ is a definable relation between elements of $A$ and of $\mathbb{N}$ such that $\mathbf{T}\vdash \forall x\in A\, \exists y\in \mathbb{N}\,\alpha(x,y)$.
Arrows between assemblies $(A,\alpha)$ and $(B,\beta)$ are equivalence classes of functional relations $F$ from $A$ to $B$ such that 
$$\mathbf{T}\vdash \exists e\in \mathbb{N}\,\forall x\in A\,\forall y\in B\,\forall z\in \mathbb{N}\,(\alpha(x,z)\wedge F(x,y)\rightarrow \beta(y,\{e\}(z)))$$
The assemblies category is denoted in \cite{VDB} as $\mathcal{Asm}_{\mathcal{E}[\mathbf{T}]}$.

If we now consider our category $\CC_r$ of realized collections in section.~\ref{reacol} formalized  in $ \mathbf{T}$, every object $\{x|\,\varphi(x)\}$ (for which $\mathbf{T}\vdash \forall x\,( \varphi(x)\rightarrow x\in \mathbb{N})$ always holds) can be mapped to the assembly $\mathbf{i}(\{x|\,\varphi(x)\}):=(\{x|\varphi(x)\},=_\varphi)$ where $$x=_\varphi y\equiv^{def} \varphi(x)\wedge \varphi(y)\wedge x=y$$
Arrows $[\mathbf{n}]_\approx$ in $\CC_{r}$ can be sent to the functional relations representing their graph, thus defining a functor $\mathbf{i}$ from $\CC_r$ to $\mathcal{Asm}_{\mathcal{E}[\mathbf{T}]}$. Since the numerical existence property holds for $\mathbf{T}$, the functor $\mathbf{i}$ is full and faithful, thus $\CC_r$ can be identified as a full subcategory of  $\mathcal{Asm}_{\mathcal{E}[\mathbf{T}]}$.
\iffalse
In \cite{VDB}, the authors define also a category of partitioned assemblies as those assemblies $(A,\alpha)$ satisfying the additional condition that 
$$\mathbf{T}\vdash \forall x\in A\,\forall y,z\in \mathbb{N}\,(\alpha(x,y)\wedge \alpha(x,z)\rightarrow y=z).$$\fi
Now, let us call \textit{recursive objects} those  assemblies
where $A$ is a subclass of natural numbers satisfying
$$\mathbf{T}\vdash \forall x\in A\,\forall y\in \mathbb{N}\,(\alpha(x,y)\leftrightarrow x=y)$$
If we call
%with $\mathcal{pAsm}_{\mathcal{E}[\mathbf{T}]}$ the full %subcategory of $\mathcal{Asm}_{\mathcal{E}[\mathbf{T}]}$ %whose objects are partitioned assemblies, and 
 $\mathcal{Rec}_{\mathcal{E}[\mathbf{T}]}$ the full subcategory of  assemblies with recursive objects, then $\CC_r$ can be seen as a full subcategory of $\mathcal{Rec}_{\mathcal{E}[\mathbf{T}]}$. 
$$\xymatrix{\CC_r\ar@{^{(}->}^{\mathbf{i}}[rr]\ar@{^{(}->}[rd]        &           &\mathcal{Asm}_{\mathcal{E}[\mathbf{T}]}\\
    &\mathcal{Rec}_{\mathcal{E}[\mathbf{T}]} \ar@{^{(}->}_{\mathbf{incl}}[ru]  &\\
}$$
%\textcolor{red}{\bf  milly: qui sopra bisogna dare la def  dei recursive objects e togliere i partitioned}
%where we added a superscript on $\CC_r$  and $\peff$ to indicate the theory with which they are formulated.

%\textcolor{red}{\bf milly: qui sopra bisogna dare la def di discrete come ex/lex dei recursive objects e togliere i partitioned}
Moreover,  recalling that the ex/lex completion of the category of recursive objects turns out to be that of { discrete objects} in \cite{rosolinidiscrete,vanoostenhomotopy},  we denote the ex/lex completion of $\mathcal{Rec}_{\mathcal{E}[\mathbf{T}]}$ with $\mathcal{Disc}_{\mathcal{E}[\mathbf{T}]}$. Finally,  since $\CC_r$ is a full subcategory of {recursive objects}
 and $\peff$ is the ex/lex completion of $\CC_r$ we have an embedding of $\peff$ in $\mathcal{Disc}_{\mathcal{E}[\mathbf{T}]}$ that extends to fibers, trivially.

\[% https://q.uiver.app/#q=WzAsNSxbMCwwLCJcXHBlZmZeXFx0b197c2V0fSJdLFsxLDEsIlxccGVmZiJdLFsyLDAsIlxccGVmZl5cXHRvIl0sWzQsMCwiXFxtYXRoY2Fse01vZH1fe1xcbWF0aGNhbHtFfVtcXG1hdGhiZntUfV19XlxcdG8iXSxbMywxLCJcXG1hdGhjYWx7TW9kfV97XFxtYXRoY2Fse0V9W1xcbWF0aGJme1R9XX0iXSxbMCwxXSxbMiwxXSxbMyw0XSxbMSw0XSxbMiwzXSxbMCwyLCIiLDEseyJzdHlsZSI6eyJ0YWlsIjp7Im5hbWUiOiJob29rIiwic2lkZSI6InRvcCJ9fX1dXQ==
\begin{tikzcd}[column sep= tiny]
	{\peff^\to_{set}} && {\peff^\to} && {\mathcal{Disc}_{\mathcal{E}[\mathbf{T}]}^\to} \\
	& \peff && {\mathcal{Disc}_{\mathcal{E}[\mathbf{T}]}}
	\arrow[hook, from=1-1, to=1-3]
	\arrow[from=1-1, to=2-2]
	\arrow[from=1-3, to=1-5]
	\arrow[from=1-3, to=2-2]
	\arrow[from=1-5, to=2-4]
	\arrow[from=2-2, to=2-4]
\end{tikzcd}\]
%\textcolor{red}{\bf milly: manca il superscript a $\peff$ o va tolto anche sopra a C^r}

In addition, recalling that $\mathcal{Disc}_{\mathcal{E}[\mathbf{T}]}$ is the ex/reg completion of the category of modest sets (see \cite{pinomod}), namely the
category of assemblies which satisfy
$$\mathbf{T}\vdash \forall x,y\in A\forall z\in \mathbb{N}(\alpha(x,z)\wedge \alpha(y,z)\rightarrow x=y)$$ after denoting
such a category  within $\mathcal{Eff}_{\mathcal{E}[\mathbf{T}]}$ with $\mathcal{Mod}_{\mathcal{E}[\mathbf{T}]}$  we obtain the following result.

 %Now, since $\CC_r$ is a full subcategory of {recursive objects}
 %and $\peff$ is in fact the ex/lex completion of $\CC_r$  it turns out that:
\begin{prop} Let $\peff$ be the predicative effective topos built on an extension  $\mathbf{T}$ of $\mathbf{CZF}$ having the numerical existence property; and let  $\peff_{sm.\Delta}$ be the full subcategory of $\peff$  given by objects $(\Aa,[\RR])$ where $[\RR]\in \Props(\Aa \times \Aa)$. Then, $\peff_{sm.\Delta}$  is a full {subcategory} of the predicative realizability category $\mathcal{Eff}_{\mathcal{E}[\mathbf{T}]}$ defined as in \cite{VDB} from the syntactic category $\mathcal{E}[\mathbf{T}]$ of definable classes of $\mathbf{T}$ endowed with the family $\mathcal{S}[\mathbf{T}]$ of arrows whose fibers are sets in $\mathbf{T}$.

\[% https://q.uiver.app/#q=WzAsNSxbMCwxLCJcXHBlZmZcXGNvbmcgKFxcQ0Nfcilfe2V4L2xleH0iXSxbMSwxLCJcXG1hdGhjYWx7RGlzY31fe1xcbWF0aGNhbHtFfVtcXG1hdGhiZntUfV19PShcXG1hdGhjYWx7TW9kfV97XFxtYXRoY2Fse0V9W1xcbWF0aGJme1R9XX0pX3tleC9yZWd9Il0sWzIsMSwiKFxcbWF0aGNhbHtBc219X3tcXG1hdGhjYWx7RVtcXG1hdGhiZntUfV19fSlfe2V4L3JlZ30iXSxbMiwwLCJcXG1hdGhjYWx7RWZmfV97XFxtYXRoY2Fse0V9W1xcbWF0aGJme1R9XX0iXSxbMCwwLCJcXHBlZmZfe3NtLlxcRGVsdGF9Il0sWzAsMSwiIiwwLHsic3R5bGUiOnsidGFpbCI6eyJuYW1lIjoiaG9vayIsInNpZGUiOiJ0b3AifX19XSxbMSwyLCIiLDAseyJzdHlsZSI6eyJ0YWlsIjp7Im5hbWUiOiJob29rIiwic2lkZSI6InRvcCJ9fX1dLFszLDIsIiIsMix7InN0eWxlIjp7InRhaWwiOnsibmFtZSI6Imhvb2siLCJzaWRlIjoidG9wIn19fV0sWzQsMCwiIiwwLHsic3R5bGUiOnsidGFpbCI6eyJuYW1lIjoiaG9vayIsInNpZGUiOiJ0b3AifX19XSxbNCwzLCIiLDIseyJzdHlsZSI6eyJ0YWlsIjp7Im5hbWUiOiJob29rIiwic2lkZSI6InRvcCJ9fX1dXQ==
\begin{tikzcd}
	{\peff_{sm.\Delta}} && {\mathcal{Eff}_{\mathcal{E}[\mathbf{T}]}} \\
	{\peff\cong (\CC_r)_{ex/lex}} & {\mathcal{Disc}_{\mathcal{E}[\mathbf{T}]}=(\mathcal{Mod}_{\mathcal{E}[\mathbf{T}]})_{ex/reg}} & {(\mathcal{Asm}_{\mathcal{E[\mathbf{T}]}})_{ex/reg}}
	\arrow[hook, from=1-1, to=1-3]
	\arrow[hook, from=1-1, to=2-1]
	\arrow[hook, from=1-3, to=2-3]
	\arrow[hook, from=2-1, to=2-2]
	\arrow[hook, from=2-2, to=2-3]
\end{tikzcd}\]
\begin{proof}
The category $\mathcal{Eff}_{\mathcal{E}[\mathbf{T}]}$ proposed in \cite{VDB} as a predicative version of Hyland's Effective Topos $\eff$ is the full subcategory of the ex/reg completion of $\mathcal{Asm}_{\mathcal{E}[\mathbf{T}]}$, whose objects are %those equivalence relations $i:R\to A\times A$ in $\mathcal{Asm}_{\mathcal{E[\mathbf{T}]}}$ that are in $\mathcal{S}[\mathbf{T}]$ (see Lemma 5.8 of \cite{VDB}). 
\textit{separated} in the sense of pg.\ 141 of \cite{VDBI}. The statement follows observing that objects $(\Aa,[\RR])$ of $\peff_{sm.\Delta}$, seen as objects in $(\mathcal{Asm}_{\mathcal{E[\mathbf{T}]}})_{ex/reg}$, are separated in the above sense since $[\RR]\in \Props(\Aa\times \Aa)$.
\end{proof}
\end{prop}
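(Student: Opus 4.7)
The plan is to exploit the embedding $\peff\cong (\CC_r)_{ex/lex}\hookrightarrow (\mathcal{Asm}_{\mathcal{E}[\mathbf{T}]})_{ex/reg}$ depicted in the diagram just above the statement, and then to verify that every object of $\peff_{sm.\Delta}$ lands in the full subcategory $\mathcal{Eff}_{\mathcal{E}[\mathbf{T}]}$ of \emph{separated} objects of $(\mathcal{Asm}_{\mathcal{E}[\mathbf{T}]})_{ex/reg}$. Because that diagram already provides a full and faithful comparison $\peff\hookrightarrow (\mathcal{Asm}_{\mathcal{E}[\mathbf{T}]})_{ex/reg}$, induced by the full and faithful functor $\mathbf{i}\colon \CC_r\hookrightarrow \mathcal{Asm}_{\mathcal{E}[\mathbf{T}]}$ and the universal property of the exact completion, the proposition reduces to a pointwise check of the separateness condition recalled from page 141 of \cite{VDBI}.

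Concretely, I would unpack how the embedding acts on a typical object: it sends $(\Aa,[\RR])$ to the ex/reg object presented by the assembly $\mathbf{i}(\Aa)$ together with the sub-assembly $\mathbf{i}(\RR)\hookrightarrow \mathbf{i}(\Aa)\times \mathbf{i}(\Aa)$ obtained from any chosen representative $\RR$ of $[\RR]$. Independence from the choice of representative is immediate, since two representatives of a common weak-subobject class are connected by operations going back and forth, and these operations give an isomorphism of the associated sub-assemblies in the ex/reg completion. So the task reduces to verifying that the monomorphism $\mathbf{i}(\RR)\hookrightarrow \mathbf{i}(\Aa)\times \mathbf{i}(\Aa)$ belongs to the class $\mathcal{S}[\mathbf{T}]$ of small maps, which is precisely the separateness requirement of \cite{VDBI}.

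When $[\RR]\in\Props(\Aa\times \Aa)$, by the very definition of $\Props$ as the poset reflection of $\mathbf{Set}^r$, a representative $\RR$ may be chosen to be a family of realized sets on $\Aa\times \Aa$, i.e.\ of the form $\tau_{\Aa\times \Aa}(\mathbf{n})$ for some operation $[\mathbf{n}]_\approx\colon \Aa\times \Aa\to \msf{U}_{\mathsf{S}}$. Each fibre of such $\RR\to \Aa\times \Aa$ is then a realized set, and the remaining step is to show that this realized set corresponds under $\mathbf{i}$ to an honest $\mathbf{T}$-set in the sense of $\mathcal{S}[\mathbf{T}]$. I expect this to be the principal obstacle: one must unfold the definition of the universe $\msf{U}_{\mathsf{S}}$ (through Feferman fixed points in the $\tar$ case, or through the inductive predicate $\mathsf{Set}(x)$ in the $\mathbf{CZF}$-based cases) and invoke the numerical existence property of $\mathbf{T}$ to pass from internal set-codes to external sets of $\mathbf{T}$, along the lines of the analysis in Section 8 of \cite{VDBI}. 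Once this translation is established, separateness follows and the full inclusion $\peff_{sm.\Delta}\hookrightarrow \mathcal{Eff}_{\mathcal{E}[\mathbf{T}]}$ is obtained as the restriction of the embedding already set up in the diagram.
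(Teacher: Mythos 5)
Your proposal follows the same route as the paper's own proof: restrict the already-established embedding $\peff\cong(\CC_r)_{ex/lex}\hookrightarrow(\mathcal{Asm}_{\mathcal{E}[\mathbf{T}]})_{ex/reg}$ to $\peff_{sm.\Delta}$ and verify that $[\RR]\in\Props(\Aa\times\Aa)$ forces the defining equivalence relation to be small, hence the object separated in the sense of p.~141 of \cite{VDBI} --- the paper states exactly this observation and nothing more. Your expansion is sound; the only quibble is that the passage from internal set-codes in $\msf{U}_{\mathsf{S}}$ to external $\mathbf{T}$-sets rests on the inductive generation of the universe (via $\mathbf{REA}$ and its variants) rather than on the numerical existence property, which the paper invokes only for the full faithfulness of $\mathbf{i}$.
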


\iffalse
$$\xymatrix{\peff\cong (\CC_r)_{ex/lex}\ar@{^{(}->}[r]    &{\mathcal{Disc}_{\mathcal{E}[\mathbf{T}]}=(\mathcal{Mod}_{\mathcal{E}[\mathbf{T}]})_{ex/reg}}\ar@{^{(}->}[r]  &(\mathcal{Asm}_{\mathcal{E[\mathbf{T}]}})_{ex/reg}    \\
}$$
%where $(\mathcal{pAsm}_{\mathcal{E[\mathbf{T}]}})_{ex/lex}= ((\mathcal{pAsm}_{\mathcal{Eff[\mathbf{T}]}})_{reg/lex})_{ex/reg} $.

and we only know that
$(\mathcal{pAsm}_{\mathcal{E[\mathbf{T}]}})_{reg/lex}$
is mapped by universal property constructively to $\mathcal{Asm}_{\mathcal{E[\mathbf{T}]}}$
to
$\mathcal{Asm}_{\mathcal{E[\mathbf{T}]}})_{ex/reg}$ by universal property of exact on lex completion.\fi

\iffalse
When $\mathbf{T}$ is a theory having the numerical existence property, our category $\peff$
actually embeds  in the category $\mathcal{Mod}_{\mathcal{E}[\mathbf{T}]}$ (mimicking the category of modest sets (see \cite{pinomod})), namely the
category of assemblies which satisfy
$$\mathbf{T}\vdash \forall x,y\in A\forall z\in \mathbb{N}(\alpha(x,z)\wedge \alpha(y,z)\rightarrow z=y)$$
Such embedding extends to fibers, trivially.
\fi
%\textcolor{red}{\bf milly: qui sotto vanno i discrete!}

Observe that, when we consider $\mathbf{T}$ to be $\mathbf{ZFC}$, although it does not satisfy the numerical existence property, we can still define a functor from $\peff$ to a version $\eff_{def}$ of Hyland's Effective Topos  built as the ex/reg completion of the full subcategory of $\mathcal{Asm}_{\mathcal{E}[\mathbf{ZFC}]}$ whose objects are assemblies of which the underlying classes are definable sets. %and called $\mathcal{Eff}_{\mathcal{E}[\mathbf{ZFC}]}$.
 Indeed, in $\mathbf{ZFC}$ all definable subclasses of $\mathbb{N}$ are in fact sets.
%one can define a faithful functor that is injective on objects from $\CC_r$ to an adequate category of %partitioned assemblies
%{\it recursive objects}
%based on definable sets that cannot be proven to be full but still preserve the logical operators that are needed to show that the ex/lex completion of $\CC_r$ is a subcategory of the ex/lex completion of such a category of  recursive objects. Since the latter embeds in $\eff_{def}$ we obtain that $\peff$ is a subcategory of $\eff_{def}$. 
In the same framework, one can see that this $\eff_{def}$ is a full subcategory of $\bf \mathcal{Eff}_{\mathcal{E}[\mathbf{ZFC}]}$.

$$\xymatrix{\peff\ar@{^{(}->}^{not\  full}[rr]   & &\eff_{def} \ar@{^{(}->}[r]   &\mathcal{Eff}_{\mathcal{E}[\mathbf{ZFC}]}}$$
%\textcolor{magenta}{\bf milly:  menzioniamo anche l'altro confronto tra peff ed eff con il modello di id1hat in ZFC????}
%\textcolor{green}
%{Io non lo menzionerei, perché c'è già dall'altra parte e potrebbe creare un po' di confusione, in relazione a quello che abbiamo già scritto. Però se vuoi possiamo accennare che nell'altro articolo c'era anche un altro confronto mettendo il riferimento, dicendo semplicemente una frase come. }

A comparison between $\peff$ formalized in $\tar$, and  the usual  Hyland's Effective Topos  $\eff$\ as an internal large category within {\bf ZFC} is induced by the standard interpretation of $\tar$ in the {\bf ZFC} set theory, as shown in \cite{maietti_maschio_2021}.

%since in $\mathbf{ZFC}$ all subclasses of natural numbers are in fact sets. At the same time, this version of $\eff$ can be embedded in the predicative realizability category defined as in \cite{VDB}.

All these observations have taken into account only the base category $\peff$. A comparison between their fibered structures is left to future work.  Just observe that while maps in van den Berg and Moerdijk 's approach are built on the notion of set in axiomatic set theory, in our approach the fibration of sets is defined mimicking the structure of sets within the language of dependent type theory as that used in both levels of the Minimalist Foundation \cite{m09}.

Nevertheless,  our comparisons are sufficient to conclude that {\it the full subcategory of discrete objects of  Hyland's Effective Topos  or  of van den Berg and Moerdijk's constructive rendering contains already a predicative version of $\mathbf{Eff}$}.  In a few words, here  we confirm what said on p. 1140  of \cite{vanoostenhomotopy} by concluding that {\it the discrete objects of $\mathbf{Eff}$ can be considered the realm of constructive predicative mathematics}.
%We can first notice two main differences between our approach and their approach: first, in our construction, we deal only with subclasses of natural numbers (whether we start from $\tar$ or with extensions of $\mathbf{CZF}$), while van den Berg and Moerdijk work with arbitrary definable classes of set theory; second, our notion of the small map (that is, of dependent set) is based on closure under some type-theoretical constructors, while van den Berg and Moerdijk use the notion of set provided by the set theory they are using. 

%Another difference in the two approaches is that we obtain our variant of the effective topos via a predicative rendering of the ex/lex completion of partitioned assemblies (restricted to the full subcategory of recursive functions, on which one does not need the axiom of choice); in \cite{VDB} instead, the authors use the characterization of the effective topos as ex/reg completion of the category of assemblies, to provide their construction.  

%As a last remark, if we follow the construction in \cite{VDB} applied to the syntactic category of $\mathbf{ZFC}$ we do not obtain the original effective topos. This is due to the fact that the base category is a category of (definable) classes and not a category of sets. 
%Our construction does not provide the original effective topos, too, but a subcategory of it, since subclasses of natural numbers are sets in $\mathbf{ZFC}$.\\
\section{Conclusions}
We have shown here that {\it the  full subcategory of discrete objects of Hyland's Effective topos $\mathbf{Eff}$}\ contains already a {\it fibred predicative topos} validating the formal Church's thesis,  even when both are formalized in a constructive metatheory.  We did so by enucleating  a fibred structure for $\peff$ introduced in
\cite{maietti_maschio_2021} that would be enough to model both levels of \mf\ and its extensions with inductive and conductive predicates in \cite{mmr21,mmr22,mfps23,phdthesisSabelli}.  

Furthermore, thanks to the equiconsistency of both levels of \mf\ with their classical version shown in \cite{mfequicla}, we could use $\peff$ as a computational model for the classical version of \mf, too. And,  since  both levels of \mf\ can be interpreted in $\mathbf{HoTT}$\ in a compatible way
as shown  in \cite{contentemaietti},   it would be interesting to investigate whether  $\peff$  is  somewhat  related to computational models of  Homotopy Type theory,  for example  to that in \cite{norcub}.

In the future, we intend to investigate   abstract notions of {\it fibred predicative topos} and of {\it predicative tripos-to-topos construction} of which  the fibred structure of $\peff$   enucleated here  would provide a key example.

\textbf{Acknowledgements}
The authors would like to acknowledge Michael Rathjen, Pino Rosolini, Pietro Sabelli, Thomas Streicher  
\iffalse for his consultancy regarding the numerical existence property in extensions of CZF,\fi
and Davide Trotta for fruitful discussions.

All the authors are affiliated with the INdAM national research group GNSAGA. Moreover, the first and second authors would like to thank the Hausdorff Research Institute for Mathematics for the financial support received to attend part of the trimester program 'Prospects of Formal Mathematics' when significant progress was made for this work.

\bibliography{bib2}
\bibliographystyle{alpha}

\end{document}